
\documentclass[numbers,webpdf,imanum]{ima-authoring-template}%
\usepackage{xcolor}
\usepackage{todonotes}
\usepackage{cleveref}

\DeclareMathOperator{\tr}{tr}
\newcommand{\revise}[1]{{#1}}

\newtheorem*{assumption}{Assumptions}
\newcommand{\vertiii}[1]{{\left\vert\kern-0.25ex\left\vert\kern-0.25ex\left\vert #1
		\right\vert\kern-0.25ex\right\vert\kern-0.25ex\right\vert}}
\newcommand{\tnorm}[2]{\vertiii{#1}_{#2}}

\usepackage{amsthm}
\usepackage{pgfplots}
\usepackage{pgfplotstable}
\usepackage{subcaption}
\usepackage{tikz}
\usepackage{booktabs}

\usepackage{cleveref}
\newtheorem{corollary}{Corollary}


\definecolor{seabornblue}{rgb}{0.2980392156862745, 0.4470588235294118, 0.6901960784313725}
\definecolor{seaborngreen}{rgb}{0.3333333333333333, 0.6588235294117647, 0.40784313725490196}
\definecolor{seabornred}{rgb}{0.7686274509803922, 0.3058823529411765, 0.3215686274509804}




\graphicspath{{Fig/}}


\theoremstyle{thmstyletwo}%
\newtheorem{theorem}{Theorem}
%

%
\newtheorem{remark}{Remark}%
\newtheorem{lemma}{Lemma}%

\numberwithin{equation}{section}

\begin{document}

\DOI{DOI HERE}
\copyrightyear{2023}
\vol{00}
\pubyear{2023}
\access{Advance Access Publication Date: Day Month Year}
\appnotes{Paper}
\copyrightstatement{Published by Oxford University Press on behalf of the Institute of Mathematics and its Applications. All rights reserved.}
\firstpage{1}


\title[FEM discretization of the smectic density equation]{Finite-element discretization of the smectic density equation}

\author{Patrick E. Farrell\ORCID{0000-0002-1241-7060}
\address{\orgdiv{Mathematical Institute}, \orgname{University of Oxford}, \orgaddress{\street{Woodstock Rd}, \postcode{OX2 6GG}, \country{UK}}}}
\author{Abdalaziz Hamdan\ORCID{0000-0003-0287-2630}
\address{\orgdiv{Department of Mathematics}, \orgname{Imperial College London}, \orgaddress{\street{Exhibition Rd}, \postcode{SW7 2AZ}, \country{UK}}}}
\author{Scott P. MacLachlan \ORCID{0000-0002-6364-0684}
\address{\orgdiv{Department of Mathematics and Statistics}, \orgname{Memorial University of Newfoundland}, \orgaddress{\street{Elizabeth Ave}, \postcode{A1C 5S7}, \state{NL}, \country{Canada}}}}


\corresp[*]{Corresponding author: \href{email:a.hamdan@imperial.ac.uk}{a.hamdan@imperial.ac.uk}}

\received{Date}{0}{Year}
\revised{Date}{0}{Year}
\accepted{Date}{0}{Year}


\abstract{\revise{The fourth-order PDE that models the density variation of smectic A liquid crystals presents unique challenges in its (numerical) analysis beyond more common fourth-order operators, such as the classical
    biharmonic.  While the operator is positive definite, the equation has a ``wrong-sign'' shift, making it somewhat more akin to an indefinite Helmholtz operator, with lowest-energy modes consisting of plane waves. As a result, for large shifts, the natural continuity, coercivity, and inf-sup constants degrade considerably, impacting standard error estimates.}  In
	this paper, we analyze and compare three finite-element formulations for such
	PDEs, based on $H^2$-conforming elements, the $C^0$ interior penalty
	method, and a mixed finite-element formulation that explicitly introduces approximations to the gradient of the
	solution and a Lagrange multiplier.
	The conforming method is simple but is impractical to apply in three dimensions; the interior-penalty method works well in two and three dimensions but has lower-order convergence and (in preliminary experiments) seems difficult to precondition; the mixed method uses more degrees of freedom, but works well in both two and three dimensions, and is amenable to monolithic multigrid preconditioning. \revise{Our analysis reveals different behaviours of the error bounds with the shift parameter and mesh size for the different schemes.}
	Numerical
	results verify the finite-element convergence for all
	discretizations, and illustrate the trade-offs between the three schemes.}
\keywords{Argyris elements; $C^0$ interior penalty method; Mixed finite-element method; Smectic liquid crystals.}


\maketitle

\section{Introduction}\label{sec1}
Recent years have seen significant and successful efforts in developing numerical models of various liquid crystalline materials \cite{pevnyi2014modeling, MR3439774, MR3504546, MR3403140,MR1618476,MR3033068,MR2596543,Ball-apala}.
In these models, equilibrium states of liquid crystals usually correspond to minimizers of a given energy functional, which can be directly discretized using finite-element (or other) variational techniques.  Smectic A liquid crystals are characterized by their natural propensity to form layers with periodic variation in the density of the liquid crystal aligned with the orientation of the molecules.  While some models make use of a complex order parameter as a model of the energy of liquid crystals \cite{gennes-1972-article}, several recent papers have proposed models based on a real-valued density variation \cite{pevnyi2014modeling, doi:10.1080/15421406.2015.1030571, xia2021structural}.  For example, Pevnyi et al.~\cite{pevnyi2014modeling} propose a model
\begin{equation}\label{eq:PSS}
	E(u,\vec{\nu})=\int_{\Omega} \frac{m_1}{2} u^2+\frac{m_2}{3}u^3 +\frac{m_3}{4}u^4+B\left|\nabla\nabla u+q^2\vec{\nu}\otimes\vec{\nu}u\right|^2+\frac{K}{2}|\nabla \vec{\nu}|^2,
\end{equation}
where $\Omega\subset \mathbb{R}^{d}$, for $d\in\{2,3\}$, $u:\Omega\rightarrow \mathbb{R}$ represents the variation in the density of the liquid crystal from its average density, $\vec{\nu}: \Omega \to \mathbb{R}^d$ is the unit-length director of the liquid crystal (the local axis of average molecular alignment), and $m_1,\ m_2,\ m_3,\ q,\ K$, and $B$ are real-valued constants determined by the liquid crystal under consideration.  Of these, the smectic wavenumber, $q$, is notable because it prescribes a preferred wavelength for the solution of $2\pi/q$.  Here, and in what follows, we use $|\boldsymbol{T}|^2 = \boldsymbol{T}:\boldsymbol{T}$ to denote the Frobenius norm squared of a tensor $\boldsymbol{T}$ (of any order), defined as the sum of squares of the entries in $\boldsymbol{T}$ at a given point in $\Omega$. 

It is well-known that representing the orientation of the liquid crystal with a vector-valued director cannot represent certain defects of the liquid crystal~\cite{ball2017}.
In \cite{xia2021structural}, Xia et al.~adapted \eqref{eq:PSS} to make use of a tensor-valued order parameter in place of the director field, proposing
\begin{eqnarray}\label{Xia-et-all}
	E(u,\boldsymbol{Q})&=&\int_{\Omega} \frac{m_1}{2} u^2+\frac{m_2}{3}u^3 +\frac{m_3}{4}u^4+B\left|\nabla\nabla u+q^2\left(\boldsymbol{Q}+\frac{I_d}{d}\right)u\right|^2\notag\\
	&+&\frac{K}{2}|\nabla \boldsymbol{Q}|^2+f_n(\boldsymbol{Q})\label{E},
\end{eqnarray}
where $\boldsymbol{Q}: \Omega \to \mathbb{R}^{d \times d}_{\text{sym}}$ is the \revise{symmetric, traceless} tensor-valued order parameter, $I_d$ is the $d \times d$ identity
matrix, and $f_n(\boldsymbol{Q})=-l\tr(\boldsymbol{Q}^2)+ l\left(\tr(\boldsymbol{Q}^2)\right)^2$ for $d=2$ and $f_n(\boldsymbol{Q}) = -l\tr(\boldsymbol{Q}^2)-\frac{l}{3}\tr(\boldsymbol{Q}^3)+ \frac{l}{2} \left(\tr(\boldsymbol{Q}^2)\right)^2$ for $d = 3$. Here, the penalty parameter, $l$, and the functions $f_n(\boldsymbol{Q})$ are chosen so that the minimizer of $\int_\Omega f_n(\boldsymbol{Q})$ is of the form $\boldsymbol{Q} = \vec{\nu}\otimes\vec{\nu} - \frac{I_d}{d}$, and are included in the energy to weakly enforce the rank-one condition implied by \eqref{eq:PSS}. A related model was proposed by Ball \& Bedford~\cite{doi:10.1080/15421406.2015.1030571}.
An important feature of these models is the energetic competition between the term encouraging alignment of the orientation and density variation, scaled by $B$, and the deformation of the director field, scaled by $K$. The Euler--Lagrange equations for either of these functionals \eqref{eq:PSS} or \eqref{E} naturally lead to a coupled system of PDEs, with a fourth-order operator acting on $u$ and a second-order operator acting on $\vec{\nu}$ or $\boldsymbol{Q}$. While the discretization of the vector or tensor Laplacian is relatively standard, the fourth-order PDE involving $u$, which we refer to as the ``smectic density equation,'' is of a type not previously studied in the literature.

Motivated by such examples, we consider the discretization of a simplification of \eqref{E} with suitable boundary conditions, given in variational form as
\begin{equation} \label{eqn:our_problem}
	\min_{v\in \mathcal{V} \subset H^2(\Omega)}\frac{B}{2}\int_{\Omega}\left|\nabla\nabla v+q^2\boldsymbol{T}v\right|^2+\frac{m}{2}\int_{\Omega}v^2-\int_{\Omega}fv,
\end{equation}
where, motivated by the above, $\boldsymbol{T}$ is a bounded $d\times d$ tensor, with $|\boldsymbol{T}|^2 \leq\mu_1$ for $\mathcal{O}(1)$ constant $\mu_1$, while $m\ \text{and}\ q$ are $\mathcal{O}(1)$ positive constants, and $0< B\leq 1$. \revise{In \eqref{Xia-et-all}, $Q$ is symmetric, but we do not require that $T$ be symmetric in \eqref{eqn:our_problem}.} Sufficiently smooth extremizers of this energy must satisfy its Euler--Lagrange equations, which yield the fourth-order smectic density equation,
\begin{equation}\label{eq:mod_biharmonic}
	B\nabla\cdot\nabla\cdot\big(\nabla\nabla u+q^2\boldsymbol{T}u\big)+Bq^2\boldsymbol{T}:\nabla\nabla u+(Bq^4\boldsymbol{T}:\boldsymbol{T}+m)u=f. 
\end{equation}
\revise{When $T = \vec{\nu}\otimes\vec{\nu}$, we note that the functions $u(\vec{x}) = ce^{\pm \imath q\vec{\nu}\cdot\vec{x}}$ are in the nullspace of the term multiplied by $B$ in~\eqref{eqn:our_problem} and, consequently, near-nullspace modes of the differential operator in~\eqref{eq:mod_biharmonic}.  This highlights the difficulty in developing accurate numerical methods for this problem, similar to those encountered for the second-order indefinite Helmholtz equation.  We also highlight the positive sign on the second-order terms in~\eqref{eq:mod_biharmonic} that is opposite what would be expected on an $H^2$-elliptic PDE, where second-order terms typically have negative signs.}

We consider three finite-element formulations for this fourth-order problem, with a particular focus on the treatment of the boundary conditions that arise naturally from the transition from the variational to strong forms. Two formulations are based on discretizing the variational first-order optimality condition for \eqref{eqn:our_problem}, using either $H^2$-conforming or $C^0$ interior penalty (C0IP) methods.  The third formulation is based on mixed finite-element principles, introducing the gradient of the solution as an explicit variable constrained using a Lagrange multiplier, and leverages standard discretizations for the Stokes problem in order to achieve inf-sup stability. Both the C0IP and mixed approaches are quite general, in the sense that we achieve high-order convergence when using high-order elements if the solution is sufficiently smooth. However, both the $H^2$-conforming and C0IP formulations provide slightly suboptimal convergence, as shown later.  Complications to achieving optimal convergence come from the use of Nitsche's method to weakly enforce essential boundary conditions and weakly imposing $C^1$ continuity in the C0IP approach.

Finite-element methods for fourth-order $H^2$-elliptic problems have been extensively studied. These include conforming methods, such as the use of Argyris elements, nonconforming methods \cite{MR2373954,MR2207619,MR0520174}, $C^0$ interior penalty methods (C0IP, which are also nonconforming) \cite{MR2142191,MR3022211,MR345432,MR2298696}, and mixed-finite element methods, including two-field~\cite{malkus1978mixed, cheng2000some,MR0520174}, three-field \cite{MR3580405,farrell2021new}, and four-field discretizations \cite{li2017stable,behrens2011mixed, MR3852718}.  While conforming methods are attractive in two dimensions, the natural extension of Argyris elements to $\mathbb{R}^3$ requires the use of ninth-order polynomials on each element~\cite{zhang2009}, which is prohibitively expensive in comparison to low-order methods.  While a C0IP method was used in \cite{xia2021structural} and is analyzed herein, preliminary experiments showed that it is difficult to develop effective preconditioners for this discretization, motivating the consideration of alternate approaches.  Thus, we also propose a mixed finite-element discretization of \eqref{eqn:our_problem} that does not require growth in polynomial order in three dimensions, and which we expect to be more amenable to the development of effective preconditioners, similar to those in \cite{farrell2021new}.

An additional challenge in considering the models of smectic LCs in \cite{pevnyi2014modeling, doi:10.1080/15421406.2015.1030571, xia2021structural} is that of proper treatment of the boundary conditions.  In particular, these models typically include only natural BCs on the density variation, $u$, and do not strongly impose Dirichlet boundary conditions, such as the ``clamped'' boundary conditions that are commonly considered for the biharmonic problem.  Indeed, the case of clamped boundary conditions, where the Hessian and Laplacian weak forms of a fourth-order operator are equivalent, has been extensively studied \cite{MR2142191,malkus1978mixed,cheng2000some,li2017stable,behrens2011mixed, MR3852718,MR3533246, MR3667017, Pauly-Zulehner}.  \revise{For instance, in \cite{MR3533246}, the 2D Hessian is translated into three second-order elliptic problems that are solved sequentially using the preconditioned conjugate gradient method and multigrid preconditioners as an iterative solver for the three elliptic systems.  As noted therein, the extension of this approach to free boundary plate-bending problems is not straight-forward.} A central question in this work is how to treat the more general forms of boundary conditions that arise when moving from the variational form in \eqref{eqn:our_problem} to the strong form in \eqref{eq:mod_biharmonic}, summarized in \eqref{55}-\eqref{!!!!} below.  To our knowledge, existing results in the literature treat the cases of clamped boundary conditions, Cahn--Hilliard boundary conditions (a special case of those in \eqref{!!!!} when $q=0$) \cite{MR3022211}, and simply supported boundary conditions (a special case of those in \eqref{55}) \cite{MR3051409}, but not the case of full Neumann
boundary conditions.  In the case of simply supported boundary conditions, existence of minimizers of \eqref{E} when $q\geq 0$ as well as error estimates for its discretization using $C^0$ interior penalty methods when $q=0$ were obtained in \cite{xia2021variational}. Here, we prove well-posedness of \eqref{eqn:our_problem} and provide error estimates for its discretization using Argyris elements, C0IP, and a mixed finite-element method. 

This paper is organized as follows. A brief summary of the tools needed for the finite-element analysis is presented in~\Cref{sec:prelim}. The continuum analysis, including the weak forms and uniqueness theory, is presented in~\Cref{sec:continuum}. In~\Cref{sec:discrete}, we present the conforming, C0IP, and mixed finite-element methods, and analysis of both well-posedness and error estimates for these methods.  Finally, numerical experiments to compare the different finite-element methods are presented in~\Cref{sec:numerics}.

\section{Preliminaries}\label{sec:prelim}
In all that follows, we assume $\Omega \subset \mathbb{R}^d$, $d \in \{2, 3\}$, to be a bounded simply connected polytopal domain with Lipschitz boundary.
We recall the standard Sobolev spaces
\begin{eqnarray*}
	H^1_\Gamma(\Omega)&=&\left\{u\in H^1(\Omega)\middle| \ u=0 \ \text{on}\ \Gamma\right\},\\
	H_\Gamma(\mathrm{div};\Omega)&=&\left\{\vec{v}\in H(\mathrm{div};\Omega)\middle| \ \vec{v}\cdot\vec{n}=0 \ \text{on}\ \Gamma\right\},
\end{eqnarray*}
where $\Gamma\subset\partial\Omega$ and $\vec{n}$ is the outward unit normal to $\Gamma$. \revise{We define the divergence of a tensor $\boldsymbol{T}$ row-wise as $[\nabla\cdot T]_i = \sum_{j}\partial_j T_{ij}$}.
Let $\{\mathcal{T}_h\}$ be a quasiuniform family of triangulations of $\Omega$, with $\text{diam}(\tau) \leq h$ for all $\tau\in\mathcal{T}_h$, and let $CG_{k}(\Omega,\mathcal{T}_h),\ DG_{k}(\Omega,\mathcal{T}_h)$, and $RT_k(\Omega,\mathcal{T}_h)$ be the standard continuous Lagrange, discontinuous Lagrange, and Raviart--Thomas approximation spaces of degree $k$, respectively, on mesh $\mathcal{T}_h$. We also define ${CG}_{k}^\Gamma(\Omega,\mathcal{T}_h) = CG_{k}(\Omega,\mathcal{T}_h) \cap H^1_\Gamma(\Omega)$ and ${RT}_{k}^\Gamma(\Omega,\mathcal{T}_h)={RT}_{k}(\Omega,\mathcal{T}_h)\cap H_{\Gamma}(\text{div};\Omega)$. We use $\|u\|_0$ and $\|u\|_{0,\tau}$ to denote the $L^2$ norm of $u$ over all of $\Omega$ and over a single element, $\tau$, respectively. Similar definitions are used, as needed, for other standard Sobolev norms. 
\begin{remark}
  \revise{
    The assumption of quasiuniform meshes is essential for the conforming and mixed methods presented below, as they rely on
    \cite[Theorem 4.5.11]{MR2373954} and \cite[Lemma 2.7]{MR3660776}, respectively, both of which require the meshes to be quasiuniform.  For the C0IP discretization,this can be weakened to an assumption of shape-regularity, but we leave this generalization for future work.}   	
\end{remark}
\begin{remark}\label{Remark4.2}
	In what follows, we use $C$ to represent a generic positive constant
	that can depend on the domain, shape regularity of the triangulation, $\mathcal{T}_h$, and the polynomial degree $k$ of the finite-element space, but not on the mesh parameter, $h$, nor the smectic wavenumber, $q$, and may be different in different instances. Where needed, we will use $\big\{C_i\big\}$ to denote different arbitrary constants in the same expression. 
\end{remark}
To prove well-posedness, we will make use of a standard \revise{inverse} estimate, where $\|u\|_{0,\kappa}^2 = \int_{\kappa}u^2ds$, for $\kappa\subseteq\partial\Omega$ or $\kappa\subseteq\partial\tau$.
\begin{theorem}\label{Trace-discrete}
		Let $q\geq 1$.  There exists a constant $C>0$ such that for any \revise{quasiuniform} triangulation $\mathcal{T}_h$ and any element $\tau\in\mathcal{T}_h$, for all  $u\in H^{1}(\tau)$,
		\begin{eqnarray}\label{trace-q}
			\|u\|^2_{0,\partial \tau}\leq C \left(\left(\frac{h}{q}\right)\|\nabla u\|^2_{0,\tau}+ \left(\frac{h}{q}\right)^{-1}\|u\|_{0,\tau}^2 \right).
		\end{eqnarray}
	\end{theorem}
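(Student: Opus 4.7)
The plan is to derive this scaled trace inequality in two stages: first establish a multiplicative (Gagliardo--Nirenberg style) trace inequality by scaling from the reference element, then convert the cross term to an additive expression with a $q$-dependent Young's inequality.

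First, on the reference element $\hat{\tau}$, I would prove the multiplicative trace inequality
\begin{equation*}
\|\hat{u}\|^2_{0,\partial\hat{\tau}} \leq C\bigl(\|\hat{u}\|^2_{0,\hat{\tau}} + \|\hat{u}\|_{0,\hat{\tau}}\|\nabla\hat{u}\|_{0,\hat{\tau}}\bigr).
\end{equation*}
This comes from choosing a fixed smooth vector field $\vec{w}$ on $\hat{\tau}$ with $\vec{w}\cdot\hat{n}\ge c_0>0$ on $\partial\hat{\tau}$ (available because $\hat{\tau}$ is a simplex), integrating the identity $\nabla\cdot(\hat{u}^2\vec{w}) = 2\hat{u}\nabla\hat{u}\cdot\vec{w} + \hat{u}^2\nabla\cdot\vec{w}$ over $\hat{\tau}$, and applying Cauchy--Schwarz.

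Next, I would transfer this to a physical element $\tau$ via the affine map $F_\tau:\hat{\tau}\to\tau$ using the standard scalings (up to shape-regularity constants) $\|u\|^2_{0,\partial\tau}\sim h^{d-1}\|\hat{u}\|^2_{0,\partial\hat{\tau}}$, $\|u\|^2_{0,\tau}\sim h^{d}\|\hat{u}\|^2_{0,\hat{\tau}}$, and $\|\nabla u\|^2_{0,\tau}\sim h^{d-2}\|\nabla\hat{u}\|^2_{0,\hat{\tau}}$. Multiplying through by $h^{d-1}$ yields
\begin{equation*}
\|u\|^2_{0,\partial\tau}\leq C\bigl(h^{-1}\|u\|^2_{0,\tau} + \|u\|_{0,\tau}\|\nabla u\|_{0,\tau}\bigr).
\end{equation*}

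Finally, I would split the cross term with Young's inequality using the parameter $\epsilon=2h/q$, producing
\begin{equation*}
\|u\|_{0,\tau}\|\nabla u\|_{0,\tau}\leq \frac{q}{4h}\|u\|^2_{0,\tau}+\frac{h}{q}\|\nabla u\|^2_{0,\tau}.
\end{equation*}
Because $q\geq 1$, the two $\|u\|^2_{0,\tau}$ coefficients, $Ch^{-1}$ and $Cq/(4h)$, both absorb into a single constant times $(h/q)^{-1}=q/h$. Combining everything produces the claimed bound. There is no real obstacle here; the only point requiring a small amount of care is verifying that the intermediate multiplicative trace inequality scales to give the $h^{-1}$ coefficient on $\|u\|^2_{0,\tau}$ (rather than the weaker $h\cdot h^{-2}=h^{-1}$ obtained after an $h$-scaled $\vec{w}$), since this is precisely what lets the factor $q/h$ appear on the right-hand side when $q$ can be arbitrarily large relative to $1/h$.
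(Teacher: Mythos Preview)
Your proposal is correct and follows essentially the same route as the paper: the paper's proof simply invokes the multiplicative trace inequality (cited from Ern--Guermond) together with Young's inequality, and you have reproduced exactly this argument while additionally supplying a self-contained derivation of the multiplicative trace bound via the divergence theorem and reference-element scaling. The only content beyond the paper's one-line justification is your explicit choice of Young parameter and the observation that $q\ge 1$ lets $h^{-1}$ be absorbed into $(h/q)^{-1}$, which is precisely what the paper leaves implicit.
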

	\begin{proof}
		These inequalities are direct results of the multiplicative trace inequality~\cite[Lemma 12.15]{MR4242224} and Young's inequality. 
	\end{proof}
	\begin{corollary}\label{Trace-cor}
		Let $q\geq 1$ and $\Gamma\subset \partial\Omega$.  There exists a constant $C>0$ such that for any {quasiuniform} triangulation $\mathcal{T}_h$, if $u\in H^{1}(\tau)$  for all $\tau \in \mathcal{T}_h$, then
		\begin{eqnarray}
			\|u\|^2_{0,\Gamma}\leq C\left(\left(\frac{h}{q}\right)\sum_{\tau \in\mathcal{T}_h}\|\nabla u\|^2_{0,\tau}+ \left(\frac{h}{q}\right)^{-1}\sum_{\tau \in\mathcal{T}_h}\|u\|^2_{0,\tau}\right).
		\end{eqnarray} 
		Moreover, if $u\in H^1(\Omega)$, then
		\begin{eqnarray}
			\|u\|^2_{0,\Gamma}\leq C\left(\left(\frac{h}{q}\right)\|\nabla u\|_0^2+ \left(\frac{h}{q}\right)^{-1}\| u\|^2_{0}\right).
		\end{eqnarray} 
	\end{corollary}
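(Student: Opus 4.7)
The plan is to derive both inequalities from \Cref{Trace-discrete} by partitioning $\Gamma$ into the pieces $\partial\tau\cap\Gamma$ for the elements $\tau\in\tau_h$ that touch $\Gamma$, and then summing the elementwise trace bound. Since $\tau_h$ is a triangulation of $\Omega$, the set $\Gamma\subset\partial\Omega$ coincides (up to a set of measure zero) with the disjoint union of the faces $\partial\tau\cap\Gamma$. Therefore
\begin{equation*}
\|u\|^2_{0,\Gamma}=\sum_{\tau\in\tau_h}\|u\|^2_{0,\partial\tau\cap\Gamma}\leq\sum_{\tau\in\tau_h}\|u\|^2_{0,\partial\tau},
\end{equation*}
where the inequality just drops the restriction of integration from $\partial\tau\cap\Gamma$ to all of $\partial\tau$ and uses that the integrand $u^2$ is nonnegative (so that adding elements whose boundary does not meet $\Gamma$ only increases the sum).

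Applying \Cref{Trace-discrete} on each element $\tau$, which is legitimate because $u\in H^1(\tau)$ by hypothesis, and summing the resulting elementwise inequalities gives
\begin{equation*}
\|u\|^2_{0,\Gamma}\leq C\sum_{\tau\in\tau_h}\left(\left(\frac{h}{q}\right)\|\nabla u\|^2_{0,\tau}+\left(\frac{h}{q}\right)^{-1}\|u\|^2_{0,\tau}\right),
\end{equation*}
which is the first claim. The constant $C$ is the same constant that appears in \Cref{Trace-discrete}, so in particular it depends on shape regularity but not on $h$, $q$, or the cardinality of $\tau_h$. For the second inequality, when $u\in H^1(\Omega)$, the elementwise broken norms telescope into the global ones, $\sum_{\tau\in\tau_h}\|\nabla u\|^2_{0,\tau}=\|\nabla u\|^2_0$ and $\sum_{\tau\in\tau_h}\|u\|^2_{0,\tau}=\|u\|^2_0$, which immediately yields the stated global estimate.

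There is no substantive obstacle to this argument: once \Cref{Trace-discrete} is in hand, the corollary amounts to bookkeeping. The only point worth flagging is the transition from summing over boundary-adjacent elements to summing over all of $\tau_h$ in the first inequality, which is justified by nonnegativity and allows the bound to be stated in terms of intrinsic broken norms on the whole mesh rather than on a layer of boundary elements.
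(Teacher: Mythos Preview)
Your proof is correct and follows essentially the same approach as the paper: decompose $\Gamma$ into the pieces $\partial\tau\cap\Gamma$, bound each by $\|u\|_{0,\partial\tau}^2$, apply \Cref{Trace-discrete} elementwise, sum, and then extend the sum from boundary-adjacent elements to all of $\tau_h$ by nonnegativity. The paper writes the sum first over $\{\tau:\Gamma\cap\partial\tau\neq\emptyset\}$ before extending to all $\tau\in\tau_h$, whereas you pass to the full sum immediately, but this is only a cosmetic difference.
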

	\begin{proof}
		For any $\tau$ such that $\Gamma\cap\partial\tau\neq \emptyset$, we have $\|u\|_{0,\Gamma\cap\partial\tau}\leq \|u\|_{0,\partial\tau}$. Thus, by Theorem~\ref{Trace-discrete}, we have
		\begin{align*}
			\|u\|^2_{0,\Gamma}&= \sum_{\tau:\Gamma\cap\partial\tau\neq \emptyset}\|u\|^2_{0,\Gamma\cap\partial\tau}\leq \sum_{\tau:\Gamma\cap\partial\tau\neq \emptyset}\|u\|^2_{0,\partial\tau}\\
			&\leq C\sum_{\tau:\Gamma\cap\partial\tau\neq\emptyset}\left(\left(\frac{h}{q}\right)\|\nabla u\|^2_{0,\tau}+ \left(\frac{h}{q}\right)^{-1}\| u\|^2_{0,\tau}\right)\\
			&\leq C\left(\left(\frac{h}{q}\right)\sum_{\tau \in\mathcal{T}_h}\|\nabla u\|^2_{0,\tau}+ \left(\frac{h}{q}\right)^{-1}\sum_{\tau \in\mathcal{T}_h}\|u\|^2_{0,\tau}\right).
		\end{align*} 
		Moreover, if $u\in H^1(\Omega)$, then $\|u\|_0^2=\sum_{\tau\in\mathcal{T}_h}\|u\|^2_{0,\tau}$ and 
		$\|\nabla u\|_0^2=\sum_{\tau\in\mathcal{T}_h}\|\nabla u\|^2_{0,\tau}$.
\end{proof}
As is typical in the analysis of finite-element methods, our well-posedness results will rely on a Poincar\'e inequality.  In order to treat a more general set of boundary conditions, we state a more general form of the standard inequality.
\begin{lemma}\label{Lemma-poinc}(Poincar\'e Inequality~\cite{MR1974504,wloka_1987,MR0227584,MR2106270})
	If $u\in H^j(\Omega),\ j\in\{1,2\}$, then
	\begin{eqnarray*}
		\|u\|_{j-1}^2\leq C\left(|u|_j^2+\xi^2(u)\right),
	\end{eqnarray*}
	where $\xi$ is any seminorm on $H^j(\Omega)$ with the properties that
	\begin{itemize}
		\item There exists a constant $C>0$ such that, for all $u\in H^j(\Omega)$, we have 
		\begin{equation*}
			\xi(u)\leq C\|u\|_j.
		\end{equation*}
		\item If $p$ is a polynomial of degree less than $j$ (i.e., a constant function if $j=1$ or linear function if $j=2$), $\xi(p)=0$ if and only if $p=0$.
	\end{itemize} 
\end{lemma}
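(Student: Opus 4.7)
The plan is to prove this generalized Poincar\'e inequality by a standard compactness-contradiction argument of Peetre--Tartar type, which is well-suited to handling an abstract seminorm $\xi$ rather than a specific functional (like the mean or a trace). First, I would suppose for contradiction that no such $C$ exists, which produces a sequence $\{u_n\}\subset H^j(\Omega)$ with $\|u_n\|_{j-1}^2 > n\bigl(|u_n|_j^2+\xi^2(u_n)\bigr)$. Normalizing by $\|u_n\|_{j-1}$, I may assume $\|u_n\|_{j-1}=1$ while $|u_n|_j^2+\xi^2(u_n) < 1/n \to 0$. In particular $\{u_n\}$ is bounded in $H^j(\Omega)$, since $\|u_n\|_j^2 = \|u_n\|_{j-1}^2 + |u_n|_j^2 \le 1 + 1/n$.

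Next I would invoke the Rellich--Kondrachov theorem to extract a subsequence (not relabeled) converging strongly in $H^{j-1}(\Omega)$ to some $u$, with $\|u\|_{j-1}=1$. Since $|u_n|_j\to 0$ and $u_n\to u$ in $H^{j-1}$, the sequence $\{u_n\}$ is actually Cauchy in $H^j(\Omega)$, so $u_n\to u$ in $H^j$ and $|u|_j = 0$. On a connected domain this forces $u$ to be a polynomial of degree strictly less than $j$ (a constant when $j=1$, an affine function when $j=2$).

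Now I would use the two properties of $\xi$. By the continuity property $\xi(u_n-u)\le C\|u_n-u\|_j\to 0$, so $\xi(u_n)\to \xi(u)$; combined with $\xi(u_n)\to 0$ this gives $\xi(u)=0$. Since $u$ is a polynomial of degree less than $j$, the second hypothesis on $\xi$ then yields $u=0$, contradicting $\|u\|_{j-1}=1$.

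The main obstacle is ensuring the abstract hypotheses on $\xi$ are used in exactly the right places: continuity of $\xi$ on $H^j$ is needed to pass $\xi(u_n)\to 0$ to the limit, and the kernel condition is needed precisely on the finite-dimensional space of polynomials of degree less than $j$, which is where the Rellich step deposits the limit. Verifying that $|u|_j=0$ indeed implies $u$ is such a polynomial (using connectedness of $\Omega$, guaranteed by the simply connected polytopal assumption of \Cref{sec:prelim}) is the one nontrivial analytic ingredient; everything else is bookkeeping.
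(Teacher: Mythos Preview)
Your argument is correct and is precisely the standard Peetre--Tartar compactness-contradiction proof underlying this class of generalized Poincar\'e inequalities. Note, however, that the paper does not supply its own proof of this lemma: it is stated as a known result with citations to \cite{MR1974504,wloka_1987,MR0227584,MR2106270}, and the surrounding text only records the particular choices of $\xi$ used later. Your write-up is thus a faithful reconstruction of the argument one finds in those references (e.g., the equivalence-of-norms lemma in Brenner--Scott), so there is nothing to compare against beyond observing that you have filled in what the paper deliberately leaves to the literature.
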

In particular, for $j=1$ and any functions $\psi_1$ and $\psi_2$ that are square integrable on $\Omega$ and $\partial \Omega$, respectively, with $\int_{\Omega}\psi_1\neq 0$, and $\int_{\partial \Omega}\psi_2\neq 0$, then the Poincar\'e inequality above holds for either seminorm $\xi_1(u)$ or $\xi_2(u)$ \cite{MR1974504}, defined as
\begin{eqnarray*}
	\xi_1(u)=\left|\int_{\Omega}\psi_1u\right|,\ \  \xi_2(u)=\left|\int_{\partial\Omega}\psi_2u\right|.
\end{eqnarray*}
Note that the choices of $\psi_1=1\ \text{on}\ \Omega, \ \text{and}\ \psi_2=1\ \text{on}\ \partial \Omega$ lead to the classical Poincar\'e inequalities that are most commonly used. For $j=2$, we will use $\xi(u)=\|u\|_0$, which satisfies the conditions above.  

A useful function in our analysis is the solution of $-\Delta S=1$ with homogeneous Dirichlet boundary conditions.  We next recall some properties of this function and its discrete approximation.
\begin{lemma}\label{Lemma-coercivity-poinc}
	The weak solution of $-\Delta S=1$, with homogeneous Dirichlet boundary conditions, has positive mean and bounded $H^1$ norm. In addition, the discrete solution, $S_h\in CG_1^{\partial\Omega}(\Omega,\mathcal{T}_h)$, of $\langle \nabla S_h,\nabla v \rangle = \langle 1,v\rangle$ for all $v\in CG_1^{\partial\Omega}(\Omega,\mathcal{T}_h)$ has the same properties. 
\end{lemma}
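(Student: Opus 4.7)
The plan is to treat the continuous and discrete problems in parallel, since both are standard symmetric coercive Poisson-type problems and the same ``test against the solution'' trick handles them uniformly.

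For the continuous case, I would first observe that the weak form---find $S\in H^1_0(\Omega)$ with $\int_\Omega \nabla S\cdot\nabla v=\int_\Omega v$ for all $v\in H^1_0(\Omega)$---is well posed by Lax--Milgram, with coercivity supplied by the standard Poincar\'e inequality from Lemma~\ref{Lemma-poinc} (applied with $j=1$ and $\xi(u)=0$ because $u$ vanishes on $\partial\Omega$). Taking $v=S$ yields the energy identity $\|\nabla S\|_0^2=\int_\Omega S$. Since the right-hand side functional $v\mapsto\int_\Omega v$ does not vanish identically on $H^1_0(\Omega)$, $S\not\equiv 0$, so $\|\nabla S\|_0>0$ and hence $\int_\Omega S>0$, giving the positive-mean claim. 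The $H^1$ bound then follows by Cauchy--Schwarz and Poincar\'e: $\|\nabla S\|_0^2=\int_\Omega S\le|\Omega|^{1/2}\|S\|_0\le C|\Omega|^{1/2}\|\nabla S\|_0$, so $\|\nabla S\|_0\le C$ and then $\|S\|_1\le C$.

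For the discrete solution $S_h\in CG_1^{\partial\Omega}(\Omega,\tau_h)$, the identical argument transfers verbatim, because the bilinear form $a(u,v)=\int_\Omega\nabla u\cdot\nabla v$ remains coercive on this conforming subspace and the Poincar\'e inequality still applies. Testing with $v=S_h$ gives $\|\nabla S_h\|_0^2=\int_\Omega S_h$, so it suffices to check that $S_h\not\equiv 0$. This follows because a zero discrete solution would force $\int_\Omega v=0$ for every $v\in CG_1^{\partial\Omega}(\Omega,\tau_h)$, which is violated already by the $P_1$ hat function at any interior vertex. Hence $\int_\Omega S_h>0$, and the same Cauchy--Schwarz/Poincar\'e chain yields $\|S_h\|_1\le C$ with $C$ independent of $h$.

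The only step where I expect even a momentary pause is the discrete positivity of the mean, since a discrete maximum principle is not available on general (non-acute) meshes and one might be tempted to argue via sign of $S_h$ pointwise; the energy identity sidesteps this entirely by reducing positivity of $\int_\Omega S_h$ to mere non-vanishing of $S_h$, which is automatic from Lax--Milgram uniqueness and the existence of a test function with nonzero integral.
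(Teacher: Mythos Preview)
Your proof is correct and follows the same energy-identity approach as the paper: test the weak form with the solution itself to obtain $\|\nabla S\|_0^2=\int_\Omega S$ (and likewise for $S_h$), deduce positive mean from nontriviality, and bound the $H^1$ norm via Poincar\'e. One minor quibble: invoking Lemma~\ref{Lemma-poinc} with ``$\xi(u)=0$'' does not fit that lemma's hypotheses (the second bullet fails for nonzero constants); what you actually need---and clearly intend---is the classical Poincar\'e--Friedrichs inequality on $H^1_0(\Omega)$, which holds directly.
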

\begin{proof}
	The solutions, $S$ and $S_h$ , satisfy $\|\nabla S\|_0^2=\int_{\Omega}S$ and $\|\nabla S_h\|_0^2=\int_{\Omega}S_h$. Therefore, $S$ and $S_h$ have positive means. In addition, \revise{the basic energy estimate} that $ \|S\|_1\leq C\|1\|_0=C \,\mathrm{Area}(\Omega)$ and  $ \|S_h\|_1\leq C \,\mathrm{Area}(\Omega)$ also hold~\cite{MR3097958}.
\end{proof}

Our well-posedness proofs also rely on the standard Helmholtz decomposition in 2D \revise{for the $H(\text{div};\Omega)$ and $RT_{k}(\Omega,\mathcal{T}_h)$ spaces}, which we first state for the continuum case.  We use the standard definition of the curl of a scalar function in 2D, as $\nabla\times p = \begin{bmatrix} -p_y \\ p_x \end{bmatrix}$. 

\begin{lemma}\label{lemma-helm-cont}(2D Helmholtz decomposition \revise{for $H(\text{div};\Omega)$} ~\cite{MR2974162,MR851383,Mira})
	Let $\Omega \subset \mathbb{R}^2$, with $\partial\Omega=\Gamma_a\cup\Gamma_b$, where $\Gamma_a$ and $\Gamma_{b}$ are disjoint. 
	For $\vec{\alpha}\in H_{\Gamma_{a}}(\mathrm{div};\Omega)$, the following Helmholtz decomposition holds
	\begin{equation}
		\vec{\alpha}=\nabla\phi+\nabla\times p,
	\end{equation} 
	where $p\in H^1_{\Gamma_a}(\Omega)$, and $\phi\in H^1_{\Gamma_{b}}(\Omega)$ is the solution of $\int_{\Omega}\nabla \phi \cdot\nabla \chi=-\int_{\Omega}\nabla\cdot\vec{\alpha}\chi, \ \forall\chi\in H^1_{\Gamma_{b}}(\Omega)$. Furthermore, $p$ is a zero-mean function if $\Gamma_{a}=\emptyset$, and $\phi$ is a zero-mean function if $\Gamma_{b}=\emptyset$. This decomposition is orthogonal in the $L^2$ and $H(\mathrm{div})$ norms. 
\end{lemma}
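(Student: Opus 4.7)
The plan is to construct $\phi$ first by solving the Poisson problem stated in the lemma, then to show that $\vec{\beta} := \vec{\alpha} - \nabla\phi$ is divergence-free with vanishing normal trace on $\Gamma_a$, and finally to produce the stream function $p$ from $\vec{\beta}$ by appealing to simple connectedness of $\Omega$.

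First I would define $\phi\in H^1_{\Gamma_b}(\Omega)$ as the variational solution of $\langle\nabla\phi,\nabla\chi\rangle = -\langle\nabla\cdot\vec{\alpha},\chi\rangle$ for all $\chi\in H^1_{\Gamma_b}(\Omega)$. If $\Gamma_b\neq\emptyset$, the standard Poincar\'e inequality applied via \Cref{Lemma-poinc} (with $\xi(u)=\|u\|_{0,\Gamma_b}$) gives coercivity of the bilinear form on $H^1_{\Gamma_b}(\Omega)$, so the Lax--Milgram theorem yields a unique $\phi$. If $\Gamma_b=\emptyset$, so $\Gamma_a=\partial\Omega$, then the data $-\nabla\cdot\vec{\alpha}$ is compatible with the pure-Neumann problem: since $\vec{\alpha}\cdot\vec{n}=0$ on $\partial\Omega$, integration by parts shows $\int_\Omega\nabla\cdot\vec{\alpha}=0$. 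In this case I would restrict to the zero-mean subspace of $H^1(\Omega)$ and again apply Lax--Milgram to get a unique zero-mean $\phi$.

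Next I would set $\vec{\beta}=\vec{\alpha}-\nabla\phi$. By construction, $\langle\vec{\beta},\nabla\chi\rangle=0$ for all $\chi\in H^1_{\Gamma_b}(\Omega)$, which encodes both $\nabla\cdot\vec{\beta}=0$ in $\Omega$ and $\vec{\beta}\cdot\vec{n}=0$ on $\Gamma_a$ in a distributional/trace sense (using $\vec{\alpha}\in H_{\Gamma_a}(\mathrm{div};\Omega)$ so that $\vec{\alpha}\cdot\vec{n}=0$ on $\Gamma_a$, and using arbitrary $\chi$ that do not vanish on $\Gamma_a$). Since $\Omega\subset\mathbb{R}^2$ is bounded, simply connected, and Lipschitz, every divergence-free field in $L^2(\Omega)^2$ admits a scalar stream function in $H^1(\Omega)$, i.e.\ there exists $p\in H^1(\Omega)$ with $\vec{\beta}=\nabla\times p$, unique up to additive constants. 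The boundary condition $\vec{\beta}\cdot\vec{n}=0$ on $\Gamma_a$ translates to $\partial_t p = 0$ on $\Gamma_a$, so $p$ is constant on $\Gamma_a$ (here one uses that $\Omega$ being simply connected makes $\Gamma_a$ have at most one connected component up to the splitting with $\Gamma_b$; the edge case can be handled by working component-wise). Adjusting the additive constant yields $p\in H^1_{\Gamma_a}(\Omega)$ whenever $\Gamma_a\neq\emptyset$, and if $\Gamma_a=\emptyset$ I instead fix the constant so that $p$ has zero mean.

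Finally, for orthogonality, I would integrate by parts:
\begin{equation*}
\langle\nabla\phi,\nabla\times p\rangle = -\langle\phi,\nabla\cdot(\nabla\times p)\rangle + \int_{\partial\Omega}\phi\,(\nabla\times p)\cdot\vec{n}.
\end{equation*}
The volume term vanishes because $\nabla\cdot\nabla\times p\equiv 0$. The boundary integral splits over $\Gamma_a$ and $\Gamma_b$; on $\Gamma_b$ it vanishes because $\phi=0$, and on $\Gamma_a$ it vanishes because $(\nabla\times p)\cdot\vec{n}=\partial_t p=0$ (as $p$ is constant on $\Gamma_a$). This gives $L^2$-orthogonality, and $H(\mathrm{div})$-orthogonality follows immediately because $\nabla\cdot(\nabla\times p)=0$ so no extra divergence contribution appears. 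The main obstacle I anticipate is the rigorous handling of the trace conditions and the component-wise constancy of $p$ on $\Gamma_a$; this is where simple connectedness of $\Omega$ and the assumption that $\Gamma_a,\Gamma_b$ partition $\partial\Omega$ enter crucially, and it is the step most easily gotten wrong if $\Gamma_a$ has several disjoint arcs.
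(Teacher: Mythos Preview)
The paper does not supply its own proof of this lemma; it is stated with citations and used as a standard tool. Your proposal follows exactly the classical route found in those references (Girault--Raviart, etc.): solve the scalar Poisson problem for $\phi$, subtract $\nabla\phi$ to obtain a divergence-free remainder, and invoke simple connectedness to produce the stream function $p$. The argument is correct and complete for the purposes of the paper.

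You have also correctly flagged the one genuine subtlety: if $\Gamma_a$ is a union of several disjoint arcs, the tangential-derivative condition only forces $p$ to be constant on each arc separately, and a single additive constant cannot in general normalise all of them to zero. The cited references handle this either by a topological hypothesis (essentially that $\Gamma_a$ is connected, or more generally that the partition of $\partial\Omega$ into $\Gamma_a$ and $\Gamma_b$ does not create nontrivial relative cohomology) or by enlarging the stream-function space with one constant per component. Since the paper applies the lemma on the unit square with the four edges split between $\Gamma_a$ and $\Gamma_b$, this point is not merely academic; your awareness of it is appropriate, and the honest statement is that the lemma as written tacitly carries such an assumption from the cited sources.
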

\begin{remark}\label{rem2.2}
	If, in addition to the assumptions of Lemma \ref{lemma-helm-cont},  $\vec{\alpha}\in \left[H^{t+2}(\Omega)\right]^2\cap H_{\Gamma_{a}}(\mathrm{div};\Omega)$, $\nabla\cdot\vec{\alpha}\in H_0^{t+1}(\Omega)$, for $t\geq 0$, $\phi\in H^1_{\Gamma_{b}}(\Omega)$ is the solution of the mixed boundary value problem of the form $\int_{\Omega}\nabla \phi \cdot\nabla \chi=-\int_{\Omega}\nabla\cdot\vec{\alpha}\chi, \ \forall\chi\in H^1_{\Gamma_{b}}(\Omega)$, and the linear segments of $\Gamma_a$ and $\Gamma_{b}$ are ordered such that the singular solutions in \cite[Theorem 5.1.1.5]{MR3396210} do not arise, then $\phi\in H^{t+3}(\Omega)\cap H^1_{\Gamma_b}(\Omega)$, and $p$ is at least in $\varrho=\{p\in H^{t+2}(\Omega)\cap H^1_{\Gamma_{a}}\ | \ \nabla \times p\in[ H^{t+2}(\Omega)]^2\}$. 
\end{remark}
To see that this regularity can be achieved, we next state a regularity result for solution of the Poisson problem with mixed boundary conditions on the unit square.
\begin{lemma}
	Let $u$ be the solution of
	\begin{eqnarray*}
		-\Delta u&=&f, \ \ \ \ \ \text{in}\ \Omega,\\
		u&=&0, \ \ \ \ \ \text{on}\ \mathscr{D},\\
		\frac{\partial u}{\partial \vec{n}}&=&0, \  \ \ \ \ \text{on}\ \mathscr{N},
	\end{eqnarray*} 	
	where $f\in H_0^t(\Omega)$, $t>0$, and $\Omega=(0,1)^2$, $\partial\Omega=\Gamma_N\cup\Gamma_S\cup\Gamma_E\cup\Gamma_W$, where $\Gamma_N,\ \Gamma_S,\ \Gamma_E, \ \text{and}\ \Gamma_W$ are the North, South, East, and West faces of the square, respectively, and $\mathscr{D, \ N}$ are the faces on which we impose Dirichlet and Neumann boundary conditions, respectively. Then, if any of the following conditions hold, $u\in H^{t+2}(\Omega)$,
	\begin{itemize}
		\item $\mathscr{D}=\partial \Omega$ or $\mathscr{N}=\partial \Omega$ , and $t$ is even,
		\item $\mathscr{D}=\Gamma_E\cup\Gamma_W$  and  $\mathscr{N}=\Gamma_N\cup\Gamma_S$, and $t$ is odd, or
		\item  $\mathscr{D}=\Gamma_N\cup\Gamma_S$  and  $\mathscr{N}=\Gamma_E\cup\Gamma_W$, and $t$ is odd.
	\end{itemize}
\end{lemma}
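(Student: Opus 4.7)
The plan is to establish the regularity by an explicit Fourier-series / reflection argument, exploiting the special geometry of the square and the diagonalization of the Laplacian in trigonometric bases. In each of the three cases one expands $f$ in the basis of products of sines and cosines adapted to the boundary conditions — sines in any direction with Dirichlet endpoints, cosines in any direction with Neumann endpoints. The Poisson equation is then solved termwise by $u_{mn} = f_{mn}/(\pi^2(m^2+n^2))$ (with the zero mode handled by the solvability condition $\int_\Omega f = 0$ in the pure Neumann case, where $u$ is determined up to an additive constant). A direct summation reduces the claim $u \in H^{t+2}(\Omega)$ to the estimate $\sum_{m,n}(m^2+n^2)^t |f_{mn}|^2 < \infty$.

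This summability is in turn equivalent to $\tilde f \in H^t(\mathbb{T}^2)$, where $\tilde f$ is the periodic extension of $f$ to the torus $\mathbb{T}^2 = (-1,1)^2/\sim$ obtained by odd reflection across each Dirichlet edge of $\Omega$ and even reflection across each Neumann edge. Interior-edge compatibility is automatic: an odd (resp.~even) reflection across an edge preserves $H^t$ regularity provided the even-order (resp.~odd-order) normal derivatives of $f$ of orders at most $t-1$ vanish on that edge, which is guaranteed by $f \in H^t_0(\Omega)$. The delicate point is the four corners of $\Omega$, where the reflections from the two adjacent edges interact: a direct sign check on the quadrant-by-quadrant values of $\partial_x^j \partial_y^k \tilde f$ at a corner shows that $\tilde f$ lies in $H^t$ near the corner iff certain mixed partials $\partial_x^j \partial_y^k f$ of total order at most $t-1$ vanish there. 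This vanishing is delivered by $f \in H^t_0(\Omega)$ exactly when the parity of $t$ matches the parity pattern produced by the reflections — even $t$ when both adjacent edges carry the same type of boundary condition (case 1), odd $t$ when they carry opposite types (cases 2 and 3).

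With $\tilde f \in H^t(\mathbb{T}^2)$ in hand, the periodic Poisson problem $-\Delta \tilde u = \tilde f$ on $\mathbb{T}^2$ is solved by Fourier multipliers and immediately yields $\tilde u \in H^{t+2}(\mathbb{T}^2)$. The reflection symmetries of $\tilde f$ transfer to $\tilde u$, so that $\tilde u|_\Omega$ satisfies the prescribed boundary conditions; by uniqueness of the original boundary value problem (up to constants in the pure Neumann case), $\tilde u|_\Omega = u$, and the stated regularity of $u$ follows from the restriction. The main obstacle is the corner analysis in the previous paragraph, where the parity hypothesis on $t$ is used essentially; the remaining steps — edge compatibility, the Fourier-multiplier solve, and the restriction — are routine.
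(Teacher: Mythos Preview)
Your reflection/Fourier approach is genuinely different from the paper's, which simply invokes Grisvard's corner-regularity theory (Theorem~5.1.1.5 in \cite{MR3396210}) and verifies at each corner that $\frac{\Phi_j-\Phi_{j+1}+(t+1)\pi/2}{\pi}\notin\mathbb{Z}$, with $\Phi_j=0$ on Neumann edges and $\Phi_j=\pi/2$ on Dirichlet edges. Your construction is more elementary and self-contained.

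However, your explanation of where the parity hypothesis enters is mistaken. You claim that corner compatibility of $\tilde f$ requires certain mixed partials $\partial_x^j\partial_y^k f$ to vanish at the corners, and that $f\in H^t_0(\Omega)$ supplies this ``exactly when the parity of $t$ matches''. In fact $f\in H^t_0(\Omega)$ supplies it always, by density: approximate $f$ by $f_n\in C^\infty_c(\Omega)$ in $H^t(\Omega)$; each $\tilde f_n$ is supported away from all reflection lines and hence lies in $C^\infty(\mathbb{T}^2)$, and since reflection preserves the $L^2$ norms of every derivative one has $\|\tilde f_n-\tilde f_m\|_{H^t(\mathbb{T}^2)}=2\|f_n-f_m\|_{H^t(\Omega)}$, whence $\tilde f\in H^t(\mathbb{T}^2)$ regardless of the parity of $t$ or of the reflection types. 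Your argument therefore actually yields $u\in H^{t+2}(\Omega)$ for \emph{every} positive integer $t$ in each of the three boundary patterns, not only the parity-matched ones stated in the lemma. The parity restriction is an artifact of the Grisvard route, which treats general $f\in H^t(\Omega)$; with the stronger zero-trace hypothesis the corner singular functions are never excited. Your proof of the lemma as stated is thus sound---the parity hypothesis is simply unused---but the corner paragraph should be replaced by the density argument above, and you may as well note that you have proved more than was asked.
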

\begin{proof}
	This result is a direct consequence of \cite[Theorem 5.1.1.5]{MR3396210}. Note that, in the notation of \cite{MR3396210}, we have $\Phi_j=0$ if edge $\Gamma_j\in \mathscr{N}$, and $\Phi_j=\frac{\pi}{2}$ if edge $\Gamma_j\in \mathscr{D}$.  Then, if $j$ and $j+1$ denote adjacent edges, we require that $\frac{\Phi_j-\Phi_{j+1}+(t+1)\frac{\pi}{2}}{\pi}$ is not an integer to achieve the stated regularity result, which is guaranteed for the cases given above.  In addition, for each pair of adjacent edges, $2\frac{\Phi_j-\Phi_{j+1}+m\pi}{\pi}$ is an integer for any integer $m$, which precludes any singular solutions. These conditions are sufficient to guarantee that $u\in H^{t+2}(\Omega)$.
\end{proof}
We also make use of the discrete analogue of Lemma~\ref{lemma-helm-cont}, stated next.
\begin{lemma}\cite{MR1401938,MR2974162}\label{lemma,helm}
	Under the same assumptions as Lemma~\ref{lemma-helm-cont}, the Helmholtz decomposition of $RT^{\Gamma_{a}}_{k+1}(\Omega,\mathcal{T}_h)$ is	
	\begin{equation}
		RT^{\Gamma_a}_{k+1}(\Omega,\mathcal{T}_h)=\bigg(\nabla^{\Gamma_{a}}_h DG_{k}(\Omega,\mathcal{T}_h)\bigg)\oplus\bigg(\nabla\times CG^{\Gamma_{a}}_{k+1}(\Omega,\mathcal{T}_h)\bigg),
	\end{equation} 
	where  $\nabla^{\Gamma_{a}}_h$ is the discrete gradient operator, $\nabla^{\Gamma_{a}}_h:DG_{k}(\Omega,\mathcal{T}_h)\rightarrow RT^{\Gamma_{a}}_{k+1}(\Omega,\mathcal{T}_h)$, such that 
	\begin{equation}\label{grad_h}
		\int_{\Omega}\nabla^{\Gamma_{a}}_h u\cdot\vec{v}=-\int_{\Omega}u\nabla\cdot\vec{v}, \ \ \forall \vec{v}\in RT^{\Gamma_{a}}_{k+1}(\Omega,\mathcal{T}_h).
	\end{equation} 
	This decomposition is orthogonal in the $L^2$ and $H(\mathrm{div})$ norms.	
\end{lemma}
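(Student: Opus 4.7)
The plan is to establish the decomposition via three standard ingredients of any direct-sum splitting: (i) both summands lie inside $RT^{\Gamma_a}_{k+1}(\Omega,\tau_h)$; (ii) the two summands are orthogonal in both the $L^2$ and $H(\mathrm{div})$ inner products; and (iii) their dimensions sum to $\dim RT^{\Gamma_a}_{k+1}(\Omega,\tau_h)$.

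For (i), I would first verify $\nabla\times CG^{\Gamma_a}_{k+1}(\Omega,\tau_h)\subset RT^{\Gamma_a}_{k+1}(\Omega,\tau_h)$. For $p\in CG^{\Gamma_a}_{k+1}(\Omega,\tau_h)$, the vector field $\nabla\times p$ is piecewise polynomial of degree at most $k$, and its normal trace across any interior edge equals the tangential derivative of $p$ along that edge, which is single-valued because $p$ is continuous; hence $\nabla\times p\in RT_{k+1}(\Omega,\tau_h)$. Since $p$ vanishes on $\Gamma_a$, its tangential derivative vanishes there as well, giving $(\nabla\times p)\cdot\vec n=0$ on $\Gamma_a$. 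The containment $\nabla^{\Gamma_a}_h DG_k(\Omega,\tau_h)\subset RT^{\Gamma_a}_{k+1}(\Omega,\tau_h)$ is immediate from the defining relation \eqref{grad_h}.

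For (ii), because $\nabla\times p\in RT^{\Gamma_a}_{k+1}(\Omega,\tau_h)$ by (i), I may use it as a test function in \eqref{grad_h} to obtain, for any $u\in DG_k(\Omega,\tau_h)$,
\begin{equation*}
\int_\Omega \nabla^{\Gamma_a}_h u\cdot\nabla\times p = -\int_\Omega u\,\nabla\cdot(\nabla\times p)=0,
\end{equation*}
which is $L^2$-orthogonality. Since $\nabla\cdot(\nabla\times p)\equiv 0$, the $H(\mathrm{div})$ inner product of any two elements drawn from the two summands reduces to their $L^2$ inner product, so $H(\mathrm{div})$-orthogonality follows automatically.

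The heart of the argument is (iii). I would invoke exactness of the discrete de Rham complex
\begin{equation*}
0\longrightarrow CG^{\Gamma_a}_{k+1}(\Omega,\tau_h)\xrightarrow{\nabla\times} RT^{\Gamma_a}_{k+1}(\Omega,\tau_h)\xrightarrow{\nabla\cdot}DG_k(\Omega,\tau_h)\longrightarrow 0,
\end{equation*}
which is a standard consequence of the commuting-diagram construction of the Raviart--Thomas interpolant together with exactness of the continuous de Rham complex on a simply connected domain. Exactness yields $\dim RT^{\Gamma_a}_{k+1}=\dim\nabla\times CG^{\Gamma_a}_{k+1}+\dim\nabla\cdot RT^{\Gamma_a}_{k+1}$, while the definition \eqref{grad_h} forces the kernel of $\nabla^{\Gamma_a}_h$ to coincide with the $L^2$-orthogonal complement in $DG_k(\Omega,\tau_h)$ of the range of $\nabla\cdot$, so $\dim\nabla^{\Gamma_a}_h DG_k=\dim\nabla\cdot RT^{\Gamma_a}_{k+1}$, closing the count. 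The main obstacle will be the bookkeeping in the boundary-configuration corner cases $\Gamma_a=\emptyset$ (where $\nabla\times$ acquires a one-dimensional kernel of constants) and $\Gamma_a=\partial\Omega$ (where the range of $\nabla\cdot$ excludes the constants from $DG_k$); in both of these the zero-mean normalizations stipulated in Lemma~\ref{lemma-helm-cont} absorb the discrepancy.
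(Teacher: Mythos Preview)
The paper does not supply its own proof of this lemma: it is stated as a known result with citations to the literature \cite{MR1401938,MR2974162}, so there is no argument to compare against. Your proof is correct and follows the standard route---containment, $L^2$/$H(\mathrm{div})$ orthogonality from the defining duality \eqref{grad_h}, and a dimension count via exactness of the discrete de Rham sequence---and your handling of the endpoint cases $\Gamma_a=\emptyset$ and $\Gamma_a=\partial\Omega$ through the zero-mean normalizations of Lemma~\ref{lemma-helm-cont} is the right bookkeeping.
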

Finally, we note that the Helmholtz decomposition allows us to define an alternative norm on $H(\mathrm{div};\Omega)$, which will be useful in the later analysis.
\begin{remark}
	Let $\vec{\alpha}_1, \vec{\alpha}_2 \in H(\mathrm{div};\Omega)$ with $\vec{\alpha}_1=\nabla\phi_1+\nabla\times p_1$, and $\vec{\alpha}_2=\nabla\phi_2+\nabla\times p_2$, computed as in Lemma~\ref{lemma-helm-cont}.  The following defines an inner product on $H(\mathrm{div};\Omega)$: 
	\begin{equation}\label{eq:Div_norm}
		(\vec{\alpha}_1,\vec{\alpha}_2)_{\mathrm{Div}}=q^{-4}\left(\int_{\Omega}p_1p_2+\int_{\Omega}\nabla\phi_1\cdot\nabla\phi_2+\int_{\Omega}\nabla\cdot\vec{\alpha}_1\nabla\cdot\vec{\alpha}_2\right),
	\end{equation}
	where $q$ is a positive constant (which will be taken as the $q$ in \eqref{eqn:our_problem}).
\end{remark}
\section{Continuum Analysis}\label{sec:continuum}
For convenience we restate the problem: find $u \in H^2(\Omega)$ such that
\begin{equation}\label{12}
	B\nabla\cdot\nabla\cdot\big(\nabla\nabla u+q^2\boldsymbol{T}u\big)+Bq^2\nabla\nabla u:\boldsymbol{T}+(Bq^4\boldsymbol{T}:\boldsymbol{T}+m)u=f,  
\end{equation}
Given a test function $ \phi\in H^2(\Omega)$, integration by parts gives
\begin{equation}\label{cc}
	\int_{\Omega}f\phi=a(u,\phi) + B\int_{\partial\Omega}\phi\nabla\cdot(\nabla\nabla u+q^2\boldsymbol{T}u)\cdot\vec{n}-B\int_{\partial\Omega}\nabla \phi \cdot (\nabla\nabla u+q^2\boldsymbol{T}u)\cdot\vec{n}
\end{equation}
where the bilinear form, $a$, is given by
\begin{eqnarray}
	a(u,\phi)&=& B\int_{\Omega}\nabla\nabla u:\nabla\nabla \phi+Bq^2\int_{\Omega}\boldsymbol{T}u:\nabla\nabla \phi+Bq^2\int_{\Omega}\nabla\nabla u:\boldsymbol{T}\phi\notag\\
	&+&\int_{\Omega}(Bq^4\boldsymbol{T}:\boldsymbol{T}+m)u\phi.\label{eq11}
\end{eqnarray}
From~\eqref{cc}, we identify that two boundary conditions are required on any segment of $\partial\Omega$, and that certain boundary conditions on $u$ arise naturally from the variational formulation. Consequently, we write $\partial \Omega=\Gamma_{0,2}\cup\Gamma_{0,1}\cup\Gamma_{3,2}\cup\Gamma_{3,1}$ with $\Gamma_{i,j}\cap\Gamma_{k,\ell} = \emptyset$ for $(i,j)\neq (k,\ell)$, and specify
\begin{alignat}{3}
	u&=0,\quad& (\nabla\nabla u+q^2\boldsymbol{T}u)\cdot\vec{n}&=\vec{0}, & \quad\text{on}&\ \Gamma_{0,2},\label{55}\\    
	u&=0,&\nabla u&=\vec{0}, &\text{on}&\ \Gamma_{0,1},\label{!!}\\
	\nabla\cdot(\nabla\nabla u+q^2\boldsymbol{T}u)\cdot\vec{n}&={0},& (\nabla\nabla u+q^2\boldsymbol{T}u)\cdot\vec{n}&=\vec{0},& \text{on}&\ \Gamma_{3,2},\label{!!!}\\
	\nabla\cdot(\nabla\nabla u+q^2\boldsymbol{T}u)\cdot\vec{n}&={0}, & \nabla u&=\vec{0}, & \text{on}&\ \Gamma_{3,1}\label{!!!!}.
\end{alignat}
The above decomposition of $\partial\Omega$ is into disjoint sets, with subscripts indicating the orders of the two boundary conditions imposed. We also use the notation $\Gamma_0 = \Gamma_{0,2}\cup\Gamma_{0,1}$ (that part of $\partial\Omega$ on which the zeroth-order boundary condition $u=0$ is imposed), and similarly we define $\Gamma_1 = \Gamma_{0,1} \cup \Gamma_{3,1}$, $\Gamma_2 = \Gamma_{0,2} \cup \Gamma_{3,2}$, and $\Gamma_3 = \Gamma_{3,2}\cup\Gamma_{3,1}$.
As typical, we consider homogeneous boundary conditions here, so that the boundary integrals in~\eqref{cc} vanish, but the results hold true for inhomogeneous boundary conditions, \revise{assuming that there exists a function $\delta \in H^4(\Omega)$ such that $u=\delta \ \text{on}\ \Gamma_{0}$, $\nabla u = \nabla \delta \ \text{on} \ \Gamma_1$, $(\nabla\nabla u+q^2\boldsymbol{T}u)\cdot\vec{n}=(\nabla\nabla \delta+q^2\boldsymbol{T}\delta)\cdot\vec{n} \ \text{on}\ \Gamma_2$, and $\nabla\cdot(\nabla\nabla u+q^2\boldsymbol{T}u)\cdot\vec{n}=\nabla\cdot(\nabla\nabla \delta+q^2\boldsymbol{T}\delta)\cdot\vec{n} \ \text{on} \ \Gamma_3$. The inhomogeneous problem can therefore be transformed into a homogeneous one in a manner similar to \cite[Section 1.5]{MR851383}}.  Consequently, we define
\[
\mathscr{V} = \left\{ u\in H^2(\Omega) : u = 0 \text{ on } \Gamma_{0} \text{ and } \nabla u = \vec{0} \text{ on } \Gamma_{1}\right\}.
\]
Because of the dependence on $q$ in the bilinear form, we analyze the
problem in a $q$-dependent norm on $\mathscr{V}$, given by
\begin{equation}\label{H^2-norm}
	\|u\|^2_{2,q}=q^{-4}\|\nabla\nabla u\|_0^2+q^{-4}\|\nabla u\|_0^2+\|u\|_0^2.
\end{equation}
\begin{assumption}
	In all results that follow, we assume $q\geq 1$ and that there exists an $\mathcal{O}(1)$ constant, $s$ \revise{that is independent of $q$}, such that $sq^{-4}\leq B\leq 1$, as this is the case of interest~\cite{xia2021structural}.
\end{assumption}

\begin{theorem}\label{thm3}
	Given $f\in L^2(\Omega)$, the variational problem to find $u\in \mathscr{V}$ such that 
	\begin{equation}\label{well-posed}
		a(u,\phi) = \int_\Omega f\phi \text{ for all } \phi \in \mathscr{V}
	\end{equation}  
	is well-posed.
\end{theorem}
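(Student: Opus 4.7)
The approach is to apply the Lax--Milgram theorem to the symmetric bilinear form $a(\cdot,\cdot)$ on $\mathscr{V}$ equipped with the $q$-dependent norm $\|\cdot\|_{2,q}$ (which is a norm on $\mathscr{V}$ since $\|u\|_{2,q}\geq \|u\|_0$, and which makes $\mathscr{V}$ a Hilbert space: it is equivalent to $\|\cdot\|_{H^2}$ up to $q$-dependent constants, and $\mathscr{V}$ is closed in $H^2$ by continuity of the trace operators). The first step is to rewrite
\[
a(u,u) = B\,\|\nabla\nabla u + q^2\boldsymbol{T}u\|_0^2 + m\,\|u\|_0^2,
\]
which follows from \eqref{eq11} by completing the square, using $\boldsymbol{T}u:\boldsymbol{T}u = |\boldsymbol{T}|^2 u^2$. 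This identity is the structural source of coercivity, despite the apparent ``wrong-sign'' cross term $Bq^2\boldsymbol{T}u:\nabla\nabla u$.

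For continuity of $a$, I would apply Cauchy--Schwarz to each of the four terms of \eqref{eq11}, using $|\boldsymbol{T}|\leq \sqrt{\mu_1}$, $B\leq 1$, and the bounds $\|\nabla\nabla u\|_0\leq q^2\|u\|_{2,q}$ and $\|u\|_0\leq \|u\|_{2,q}$ that follow directly from the definition of the norm. The resulting continuity constant may depend on $q$ (of order $q^4$ when $B$ is of order one), but this is harmless for well-posedness; only the coercivity constant need be $q$-independent for a useful a priori estimate. Continuity of the right-hand side linear functional $\phi\mapsto\int_\Omega f\phi$ is immediate from $\|\phi\|_0\leq \|\phi\|_{2,q}$.

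The main step is coercivity. Applying the lower bound $B\geq sq^{-4}$ to the identity above gives
\[
a(u,u)\geq sq^{-4}\|\nabla\nabla u + q^2\boldsymbol{T}u\|_0^2 + m\|u\|_0^2 \geq \min(s,m)\bigl(q^{-4}\|\nabla\nabla u + q^2\boldsymbol{T}u\|_0^2 + \|u\|_0^2\bigr).
\]
The squared triangle inequality then yields
\[
q^{-4}\|\nabla\nabla u\|_0^2 \leq 2q^{-4}\|\nabla\nabla u + q^2\boldsymbol{T}u\|_0^2 + 2\mu_1\|u\|_0^2,
\]
so the right-hand side of the previous display controls both $q^{-4}\|\nabla\nabla u\|_0^2$ and $\|u\|_0^2$. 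To recover the remaining term $q^{-4}\|\nabla u\|_0^2$ of $\|u\|_{2,q}^2$, I would invoke Lemma~\ref{Lemma-poinc} with $j=2$ and seminorm $\xi(u)=\|u\|_0$ (admissible since $\|p\|_0=0$ forces any polynomial of degree less than $2$ to vanish), which gives $\|\nabla u\|_0^2 \leq C(\|\nabla\nabla u\|_0^2 + \|u\|_0^2)$ and hence $q^{-4}\|\nabla u\|_0^2\leq C(q^{-4}\|\nabla\nabla u\|_0^2 + \|u\|_0^2)$ using $q\geq 1$. Chaining these inequalities produces $\|u\|_{2,q}^2 \leq C\,a(u,u)$ with $C$ depending only on $s$, $m$, $\mu_1$, and the Poincar\'e constant, and in particular independent of $q$ and $B$. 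Lax--Milgram then concludes the proof.

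The hard part is conceptually identifying what makes coercivity work: without the $L^2$ term $m\|u\|_0^2$, plane waves of wavelength $2\pi/q$ aligned with the eigenvectors of $\boldsymbol{T}$ satisfy $\nabla\nabla u + q^2\boldsymbol{T}u\equiv 0$ and hence lie in the kernel of the Hessian-squared operator, so any coercivity bound must exploit $m>0$; without the lower bound $B\geq sq^{-4}$, the weight in front of $\|\nabla\nabla u + q^2\boldsymbol{T}u\|_0^2$ is too small to control the $q^{-4}\|\nabla\nabla u\|_0^2$ contribution to $\|\cdot\|_{2,q}$. Once both assumptions are in hand, no compactness or spectral argument is required; the estimate is elementary.
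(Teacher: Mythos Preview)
Your proposal is correct and follows the same Lax--Milgram strategy as the paper: continuity via Cauchy--Schwarz, coercivity by first controlling $q^{-4}\|\nabla\nabla u\|_0^2+\|u\|_0^2$ and then invoking Lemma~\ref{Lemma-poinc} with $j=2$ and $\xi(u)=\|u\|_0$. The one notable variation is in how you extract the coercivity lower bound: the paper applies Young's inequality to the cross term $2Bq^2\int_\Omega\nabla\nabla u:\boldsymbol{T}u$ with a tuned parameter $C_1=\frac{B\mu_1q^4}{B\mu_1q^4+C_2}$, whereas you use the completed-square identity $a(u,u)=B\|\nabla\nabla u+q^2\boldsymbol{T}u\|_0^2+m\|u\|_0^2$ directly, apply $B\geq sq^{-4}$, and recover $q^{-4}\|\nabla\nabla u\|_0^2$ via the triangle inequality. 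Your route is more transparent (nonnegativity of $a(u,u)$ is immediate, and the role of the assumption $B\geq sq^{-4}$ is isolated in one step), while the paper's parameter choice tracks the dependence on $m$ and $\mu_1$ slightly more explicitly; both land on the same intermediate estimate and the same $\mathcal{O}(1)$ coercivity constant.
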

\begin{proof}
	By the Lax--Milgram Theorem, this variational problem has a unique solution if the bilinear form $a$ is coercive and continuous on $\mathscr{V}$ in the $\|\cdot\|_{2,q}$ norm,
	and the associated linear form is continuous.  The assumption that $f\in L^2(\Omega)$ is sufficient to guarantee that the linear form is continuous.

        To prove continuity of $a$, we have
	\begin{eqnarray*}
		a(u,\phi)&=&  B\int_{\Omega}\nabla\nabla u:\nabla\nabla \phi+Bq^2\int_{\Omega}\nabla\nabla u:\boldsymbol{T}\phi+Bq^2\int_{\Omega}\nabla\nabla \phi:\boldsymbol{T}u\\
		&+&\int_{\Omega}(Bq^4\boldsymbol{T}:\boldsymbol{T}+m)u\phi\\
		& \leq& B\|\nabla\nabla u\|_{0}\|\nabla\nabla\phi\|_{0}+Bq^2\left(\|\nabla\nabla u\|_0\|\boldsymbol{T}\phi\|_0\right)+Bq^2\left(\|\nabla\nabla \phi\|_0\|\boldsymbol{T}u\|_0\right)\\
		&+&(Bq^4\mu_1+m)\|u\|_0\|\phi\|_0\\
		&\leq&Bq^4\|u\|_{2,q}\|\phi\|_{2,q}+2B\sqrt{\mu_1}q^4\|u\|_{2,q}\|\phi\|_{2,q}+(Bq^4\mu_1+m)\|u\|_{2,q}\|\phi\|_{2,q}.
	\end{eqnarray*} 
	This gives a $q$-dependent continuity bound, with
	$a(u,\phi)\leq CBq^4\|u\|_{2,q}\|\phi\|_{2,q}$, where $C=1+2\sqrt{\mu_1}+\mu_1+{m}/{s}$.
	
	To prove coercivity, we first observe that, for any $0 < C_1 < 1$,
	\begin{eqnarray}
		a(u,u)&=& B\|\nabla\nabla u\|_0^2+2Bq^2\int_{\Omega}\nabla\nabla u:\boldsymbol{T}u+\int_{\Omega}(Bq^4\boldsymbol{T}:\boldsymbol{T}+m)u^2\notag\\
		&\geq&B(1-C_1)\|\nabla\nabla u\|_0^2+Bq^4\left(1-\frac{1}{C_1}\right)\|\boldsymbol{T}u \|_0^2+m\|u\|_0^2 \notag\\
		&\geq&B\left(1-C_1\right)\|\nabla\nabla u\|_0^2+\left(m+Bq^4\mu_1\left(1-\frac{1}{C_1}\right)\right)\label{key2}\|u\|_0^2.
	\end{eqnarray}
	Note that since $0<C_1<1$, $1-\frac{1}{C_1} < 0$, so we get a lower bound for the expression on the second line by using the upper bound $\|\boldsymbol{T}u\|_0^2 \leq \mu_1 \|u\|_0^2$.
	Let $C_1=\frac{B\mu_1q^4}{B\mu_1q^4+C_2}$ for any constant $0<C_2<m$. Then, $1-C_1=\frac{C_2}{Bq^4\mu_1+C_2}$, and $1-\frac{1}{C_1}=-\frac{C_2}{Bq^4\mu_1}$, giving
	\begin{eqnarray}
		a(u,v)&\geq&B\left(\frac{C_2}{B\mu_1q^4+C_2}\right)\|\nabla\nabla u\|_0^2+\left(m-C_2\right)\|u\|_0^2.
	\end{eqnarray}
	Now, since $sq^{-4}\leq B\leq 1$, $B\left(\frac{C_2}{B\mu_1q^4+C_2}\right)=\left(\frac{C_2}{\mu_1q^4+\frac{C_2}{B}}\right)$ is an $\mathcal{O}(q^{-4})$ constant. That is, there exists a constant $C_3$ such that
	\begin{eqnarray}\label{18}
		a(u,u)\geq C_3\left(q^{-4}\|\nabla\nabla u\|_0^2+\|u\|_0^2\right).
	\end{eqnarray} 
	We then use Lemma \ref{Lemma-poinc}, with $j=2$, and $\xi(u)=\|u\|_0$. That is, there exists a constant $C_4$ such that $\|\nabla u\|_0^2\leq C_4\left(\|\nabla\nabla u\|_0^2+\|u\|_0^2\right)$, giving
	\begin{eqnarray*}
		a(u,u)&\geq& C_3q^{-4}\left(\|\nabla\nabla u\|_0^2+q^4\|u\|_0^2\right)\\
		&\geq& \frac{C_3q^{-4}}{2}\left(\|\nabla\nabla u\|_0^2+\|u\|_0^2\right)+\frac{C_3}{2}\left(q^{-4}\|\nabla\nabla u\|_0^2+\|u\|_0^2\right)\\
		&\geq&\left(\frac{C_3}{2C_4}q^{-4}\|\nabla u\|_0^2+\frac{C_3}{2}\left(q^{-4}\|\nabla\nabla u\|_0^2+\|u\|_0^2\right)\right),
	\end{eqnarray*} 
	As a result, $a(u,u)\geq
	\left(\min\{\frac{C_3}{2C_4},\frac{C_3}{2}\}\right)\|u\|_{2,q}^2$,
	giving an $\mathcal{O}(1)$ coercivity constant.

\end{proof}
The mixed finite-element formulation presented below relies on a reformulation of \eqref{12} to a lower-order system. We introduce
$\vec{v}=\nabla u$ and $\vec{\alpha}=B\nabla\cdot(\nabla \vec{v} +q^2\boldsymbol{T}u)$. Then, Equation~\eqref{12} is equivalent to the system of equations
\begin{eqnarray}
	\nabla\cdot\vec{\alpha}+Bq^2\nabla\vec{v}:\boldsymbol{T}+(Bq^4\boldsymbol{T}:\boldsymbol{T}+m)u&=&f\label{1},\\
	\vec{\alpha}-B\Delta\vec{v}-Bq^2\nabla\cdot (\boldsymbol{T}u)\label{2}&=&0,\\
	\left(\vec{v}-\nabla u\right)&=&0\label{3}.
\end{eqnarray}
To convert this system to weak form, we
multiply \eqref{1}, \eqref{2}, and \eqref{3} by the test functions $\phi, \vec{\psi}, \text{and} \ \vec{\beta}$, respectively, and integrate by parts. 
This yields the weak form of finding $(u,\vec{v},\vec{\alpha}) \in L^2(\Omega)\times V\times H_{\Gamma_{3}}(\mathrm{div};\Omega)$ such that 
\begin{align}
	\int_{\Omega}\nabla\cdot\vec{\alpha}\phi+Bq^2\nabla\vec{v}:\boldsymbol{T}\phi+(Bq^4\boldsymbol{T}:\boldsymbol{T}+m)u\phi&=\int_{\Omega}f\phi,\label{5}\\
	\int_{\Omega}\vec{\alpha}\cdot\vec{\psi}+B\nabla\vec{v}:\nabla\vec{\psi}+Bq^2\boldsymbol{T}u:\nabla\vec{\psi}-B\int_{\revise{\Gamma_{2}}}\vec{\psi}\cdot(\nabla\vec{v}+q^2\boldsymbol{T}u)\cdot\vec{n}&=0\label{6},\\
	\int_{\Omega}\vec{\beta}\cdot\vec{v}+\int_{\Omega}u\nabla\cdot\vec{\beta}-\int_{\revise{\Gamma_{0}}}u\vec{\beta}\cdot\vec{n}&=0,\label{7}
\end{align}
$\forall (\phi,\vec{\psi}, \vec{\beta})\in L^2(\Omega)\times V\times \revise{H_{\Gamma_{3}}(\text{div};\Omega)}$, where $V=\left\{\vec{v}\in[H^{1}_{\Gamma_{1}}(\Omega)]^d,\ \vec{v}\times\vec{n}=0\ \text{on}\ \Gamma_{0,2}  \right\}$. Note that since $u=0$ on $\Gamma_{0,2}$, we are free to require the boundary condition $\vec{v}\times \vec{n}=0$ on $\Gamma_{0,2}$, which will be needed below.  Equivalently, the system \eqref{5}-\eqref{7} can be written as a saddle-point system, to find $(u,\vec{v},\vec{\alpha}) \in L^2(\Omega)\times V\times H_{\Gamma_{3}}(\mathrm{div};\Omega)$ such that  
\begin{eqnarray}
	\mathcal{A}\left((u,\vec{v}), (\phi,\vec{\psi})\right)+b\left(\vec{\alpha},(\phi,\vec{\psi})\right)&=&F(\phi), \ \ \forall (\phi, \vec{\psi})\in L^2(\Omega)\times V\label{keyy1}\\
	b\left(\vec{\beta}, (u,\vec{v})\right)&=&0, \ \ \forall \vec{\beta}\in H_{\Gamma_{3}}(\mathrm{div};\Omega)\label{keyy2}
\end{eqnarray}
where
\begin{eqnarray}
	\mathcal{A}\left((u,\vec{v}),(\phi,\vec{\psi})\right)&=&B\int_{\Omega}\nabla\vec{v}:\nabla\vec{\psi}+Bq^2\int_{\Omega}\nabla\vec{v}:\boldsymbol{T}\phi+Bq^2\int_{\Omega}\nabla\vec{\psi}:\boldsymbol{T}u\notag\\
	&+&\int_{\Omega}(Bq^4\boldsymbol{T}:\boldsymbol{T}+m)u\phi,\label{keyr}\\
	b\left(\vec{\alpha},(u,\vec{v})\right)&=&\int_{\Omega}\vec{\alpha}\cdot\vec{v}+\int_{\Omega}u\nabla\cdot\vec{\alpha},\label{key6}\\
	F(\phi)&=&\int_{\Omega}f\phi.\label{RHS_func}
\end{eqnarray}
Here, we see that while $\alpha$ is defined in terms of $u$ above, it serves as a Lagrange multiplier in the saddle-point form, weakly enforcing that $\vec{v} = \nabla u$. 
\revise{\begin{remark}
	Proving the inf-sup condition for Problem \eqref{keyy1}-\eqref{keyy2} when $\partial \Omega = \Gamma_{3,1}$ requires that for all $p\in H^1_0(\Omega)$, there exists $\vec{\psi}\in [H_0^1(\Omega)]^2$ such that 
	\begin{eqnarray*}
		\int_{\Omega}p\nabla\cdot \vec{\psi} \geq C_1\|p\|^2_0, \ and \ \|\vec{\psi}\|_1\leq C_2\|p\|_0.
	\end{eqnarray*} 
	However, to our knowledge this requires additional assumptions on $p$ and $\vec{\psi}$, such as when $p$ has zero mean or $\vec{\psi}$ is free on a portion of $\partial\Omega$. Therefore, we exclude the case when $\partial \Omega= \Gamma_{3,1}$ from Theorem~\ref{thm:continuum_mixed}.  
\end{remark}}
In the next theorem, we prove well-posedness of \eqref{keyy1}-\eqref{keyy2} in the two-dimensional case.  Here, we make use of a similarly weighted product norm for $(u,\vec{v})$, defined as
\begin{equation}\label{eq:wtd_product_norm}
	\|(u,\vec{v})\|_{0,q,1}^2 = \|u\|_0^2 + q^{-4}\|\vec{v}\|_1^2.
\end{equation}

\begin{theorem}\label{thm:continuum_mixed}
	Assume that $\Omega \subset \mathbb{R}^2$ with $\partial\Omega \neq \Gamma_{3,1}$. Given $f\in L^2(\Omega)$, \eqref{keyy1}-\eqref{keyy2} is well-posed using the product norm defined in~\eqref{eq:wtd_product_norm} for $(u,\vec{v})$ and the $H_\mathrm{Div}$ norm induced by~\eqref{eq:Div_norm} for the Helmholtz decomposition, $\vec{\alpha}=\nabla\phi+\nabla\times p$, given in Lemma~\ref{lemma-helm-cont}. 
\end{theorem}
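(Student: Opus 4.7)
The plan is to verify the four hypotheses of Brezzi's theorem for the saddle-point problem \eqref{keyy1}--\eqref{keyy2}: continuity of the forms $\mathcal{A}$, $b$, and $F$, coercivity of $\mathcal{A}$ on $\ker b$, and the inf--sup condition for $b$.

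For continuity, term-by-term applications of Cauchy--Schwarz together with $|\boldsymbol{T}|\leq\sqrt{\mu_1}$ and the weights in $\|\cdot\|_{0,q,1}$ yield continuity of $\mathcal{A}$, exactly in the style of the proof of Theorem~\ref{thm3}, with a $q$-dependent constant of order $q^4$. For $b$, the term $\int u\,\nabla\cdot\vec{\alpha}$ is bounded directly by $\|u\|_0\|\nabla\cdot\vec{\alpha}\|_0$ and hence by $Cq^{2}\|(u,\vec{v})\|_{0,q,1}\|\vec{\alpha}\|_{\mathrm{Div}}$. For $\int\vec{\alpha}\cdot\vec{v}$, I split $\vec{\alpha}=\nabla\phi+\nabla\times p$ using Lemma~\ref{lemma-helm-cont} and integrate by parts; the boundary contributions vanish because $\phi=0$ on $\Gamma_0$, $p=0$ on $\Gamma_3$, $\vec{v}=\vec{0}$ on $\Gamma_{1}$, and $\vec{v}\times\vec{n}=0$ on $\Gamma_{0,2}$, leaving a bound involving $\|\nabla\phi\|_0$, $\|p\|_0$, and $\|\vec{v}\|_1$, and hence $\|\vec{\alpha}\|_{\mathrm{Div}}\|(u,\vec{v})\|_{0,q,1}$. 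Continuity of $F$ is immediate from $f\in L^2(\Omega)$.

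For coercivity on $\ker b$, I first identify the kernel. Testing \eqref{keyy2} with $\vec{\beta}\in[C_c^\infty(\Omega)]^d$ forces $\vec{v}=\nabla u$ distributionally; since $\vec{v}\in V\subset[H^1(\Omega)]^d$, this promotes $u$ to $H^2(\Omega)$. Allowing $\vec{\beta}$ to range over all of $H_{\Gamma_3}(\mathrm{div};\Omega)$ and integrating by parts then forces the trace $u=0$ on $\Gamma_0$; combined with the boundary conditions embedded in $V$, this yields $u\in\mathscr{V}$. Substituting $\vec{v}=\nabla u$ into \eqref{keyr} reproduces $a(u,u)$ from \eqref{eq11}, and the coercivity estimate~\eqref{18} from the proof of Theorem~\ref{thm3} together with Lemma~\ref{Lemma-poinc} applied with $j=2$ and $\xi(u)=\|u\|_0$ gives $\mathcal{A}((u,\nabla u),(u,\nabla u))\geq C\|u\|_{2,q}^2=C\|(u,\nabla u)\|_{0,q,1}^2$, as required.

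The main obstacle is the inf--sup condition. Given $\vec{\alpha}\in H_{\Gamma_3}(\mathrm{div};\Omega)$ with Helmholtz decomposition $\vec{\alpha}=\nabla\phi+\nabla\times p$, I construct a test pair $(u,\vec{v})$ in two stages so that $b(\vec{\alpha},(u,\vec{v}))$ controls $\|\vec{\alpha}\|_{\mathrm{Div}}^2 = q^{-4}(\|p\|_0^2+\|\nabla\phi\|_0^2+\|\nabla\cdot\vec{\alpha}\|_0^2)$. Taking $u_1=\nabla\cdot\vec{\alpha}$ and $\vec{v}_1=\vec{0}$ captures the $\|\nabla\cdot\vec{\alpha}\|_0^2$ term directly. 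Because $\phi$ solves $-\Delta\phi=-\nabla\cdot\vec{\alpha}$ weakly with Dirichlet data on $\Gamma_0$ and natural Neumann data on $\Gamma_3$, testing this equation against $\phi$ and invoking a Poincar\'e inequality (which the hypothesis $\partial\Omega\neq\Gamma_{3,1}$ is precisely what guarantees applies to $\phi$) yields $\|\nabla\phi\|_0\leq C\|\nabla\cdot\vec{\alpha}\|_0$, so the $\|\nabla\phi\|_0^2$ contribution is absorbed for free. For the residual $\|p\|_0^2$ piece, I introduce an auxiliary scalar $\tilde p$ solving $-\Delta\tilde p=p$ with boundary conditions chosen so that $\vec{v}_2=\nabla\times\tilde p$ (or a suitable modification of it) lies in $V$; the $L^2$-orthogonality of the Helmholtz decomposition together with integration by parts with vanishing boundary terms gives $\int\vec{\alpha}\cdot\vec{v}_2=\int\nabla p\cdot\nabla\tilde p=\|p\|_0^2$, and elliptic regularity supplies $\|\vec{v}_2\|_1\leq C\|p\|_0$. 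A suitable $q$-weighted linear combination of $(u_1,\vec{v}_1)$ and $(0,\vec{v}_2)$ then closes the inf--sup bound.

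The delicate step will be the construction of $\tilde p$: it must satisfy nonstandard mixed boundary conditions arranged so that $\vec{v}_2=\vec{0}$ on $\Gamma_1$ and $\vec{v}_2\times\vec{n}=0$ on $\Gamma_{0,2}$, while retaining enough regularity for the estimate $\|\tilde p\|_2\leq C\|p\|_0$. The assumption $\partial\Omega\neq\Gamma_{3,1}$ is precisely what rules out the sole configuration in which either the Helmholtz-derived Poincar\'e inequality for $\phi$ or the solvability-with-regularity of the auxiliary Poisson problem for $\tilde p$ would fail.
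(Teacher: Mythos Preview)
Your treatment of coercivity on $\ker b$ differs from the paper's and is in fact cleaner. The paper does \emph{not} identify the kernel explicitly; instead it first establishes $\mathcal{A}\big((u,\vec{v}),(u,\vec{v})\big)\geq C\big(q^{-4}\|\nabla\vec{v}\|_0^2+\|u\|_0^2\big)$ directly (mimicking the argument for~\eqref{18}), and then recovers control of $\|\vec{v}\|_0$ by testing the kernel constraint with $\vec{\beta}_1=[S,0]^T$, $\vec{\beta}_2=[0,S]^T$ (with $S$ from Lemma~\ref{Lemma-coercivity-poinc}) and applying Lemma~\ref{Lemma-poinc} with $\xi(v_i)=\left|\int_\Omega S v_i\right|$. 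Your route---showing that $(u,\vec{v})\in\ker b$ forces $u\in H^2(\Omega)$, $\vec{v}=\nabla u$, and $u\in\mathscr{V}$, so that $\mathcal{A}((u,\nabla u),(u,\nabla u))=a(u,u)$ and Theorem~\ref{thm3} applies---is correct and more direct. The paper's device, however, transfers verbatim to the discrete setting (Theorem~\ref{Thm3}), where one cannot conclude $\vec{v}_h=\nabla u_h$ from the discrete constraint; your argument would not.

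Your inf--sup argument has a genuine gap. Taking $\vec{v}_2=\nabla\times\tilde p$ with $\tilde p$ the solution of a Poisson problem cannot land in $V$ when $\Gamma_1\neq\emptyset$: membership in $V$ requires $\vec{v}_2=\vec{0}$ on $\Gamma_1$, i.e.\ $\nabla\tilde p=\vec{0}$ there, which imposes \emph{both} Dirichlet and Neumann data on the same boundary segment and is overdetermined for $-\Delta\tilde p=p$. Even if $\Gamma_1=\emptyset$, you would still need $\|\tilde p\|_2\leq C\|p\|_0$, i.e.\ $H^2$-regularity on a general polytope, which is not guaranteed. The phrase ``or a suitable modification of it'' does not close this. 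The paper sidesteps the issue by invoking an existing Stokes-type inf--sup (\cite[Lemma~2.7]{MR3660776}): for $p\in H^1_{\Gamma_3}(\Omega)$ there is $\vec{\psi}$ with $\vec{\psi}=\vec{0}$ on $\Gamma_0\cup\Gamma_{3,1}$, $\vec{\psi}\times\vec{n}=0$ on $\Gamma_{3,2}$, $\int_\Omega p\,\nabla\cdot\vec{\psi}\geq\|p\|_0^2$, and $\|\vec{\psi}\|_1\leq C\|p\|_0$; then $\vec{v}=[\psi_2,-\psi_1]^T\in V$ turns the divergence pairing into a curl pairing. The paper also takes $u=C_1(\nabla\cdot\vec{\alpha}-\eta)$ rather than $u=\nabla\cdot\vec{\alpha}$, so that $\int_\Omega u\,\nabla\cdot\vec{\alpha}=C_1\big(\|\nabla\cdot\vec{\alpha}\|_0^2+\|\nabla\eta\|_0^2\big)$ captures the gradient term directly, avoiding your separate Poincar\'e step for $\phi$.

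Finally, your attribution of the hypothesis $\partial\Omega\neq\Gamma_{3,1}$ to the Poincar\'e inequality for $\phi$ is incorrect: when $\Gamma_0=\emptyset$, Lemma~\ref{lemma-helm-cont} forces $\phi$ to have zero mean, and Lemma~\ref{Lemma-poinc} still applies. The hypothesis is needed precisely for the Stokes inf--sup: if $\partial\Omega=\Gamma_{3,1}$ one would need $\vec{\psi}\in[H^1_0(\Omega)]^2$, but $p\in H^1_{\Gamma_3}(\Omega)=H^1_0(\Omega)$ need not have zero mean, so the surjectivity of the divergence fails.
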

\begin{proof}
	By Brezzi's theory, proving well-posedness requires establishing the coercivity and continuity of $\mathcal{A}$, an inf-sup condition on $b$, and the continuity of $b$. As above, $F(\phi)$ is clearly continuous.
	
	We first consider the two continuity bounds.  The continuity bound for $\mathcal{A}$ gives an $\mathcal{O}(Bq^4)$ continuity constant, established similarly to the proof in Theorem \ref{thm3}. The Helmholtz decomposition in Lemma~\ref{lemma-helm-cont} and integration by parts are needed to show continuity of $b$. Let $\vec{\alpha}\in H_{\Gamma_{3}}(\mathrm{div};\Omega)$, writing $\vec{\alpha}=\nabla\phi+\nabla\times p$, where $\phi\in H^1_{\Gamma_{0}}(\Omega)$ and $p\in H^1_{ \Gamma_{3}}(\Omega)$. Then,   
	\begin{eqnarray*}
		b\left(\vec{\alpha},(u,\vec{v})\right)&=&\int_{\Omega}\vec{\alpha}\cdot\vec{v}+\int_{\Omega}u\nabla\cdot\vec{\alpha}=\int_{\Omega}\left(\nabla \phi+\nabla\times p\right)\cdot\vec{v}+\int_{\Omega}u\nabla\cdot\vec{\alpha} \\
		&=&\int_{\Omega}\nabla\phi\cdot\vec{v}+\int_{\Omega}p\nabla\times \vec{v}+\int_{\Omega}u\nabla\cdot\vec{\alpha}\\
		&\leq&q^4\left(q^{-2}\|\nabla\phi\|_0\right)\left(q^{-2}\|\vec{v}\|_0\right)+q^4\left(q^{-2}\|p\|_0\right)\left(q^{-2}\|\nabla\times\vec{v}\|_0\right)\\
		&+&q^2\|u\|_0\left(q^{-2}\|\nabla\cdot\vec{\alpha}\|_0\right)\\
		&\leq& q^4\left(1+\sqrt{2}+q^{-2}\right)\|(u,\vec{v})\|_{0,q,1}\|\vec{\alpha}\|_{\mathrm{Div}},
	\end{eqnarray*}
	where we have used the fact that $\|\nabla\times \vec{v}\|_0\leq \sqrt{2}\|\nabla\vec{v}\|_0$. We note that the ``extra'' boundary condition imposed on $\vec{v}\in V$ is needed here to ensure the boundary integral from integration-by-parts is identically zero.
	
	We now show that the bilinear form, $\mathcal{A}$, is coercive on  $$\Lambda=\{(u,\vec{v})\in L^2(\Omega)\times V\ : \ b\left(\vec{\alpha},(u,\vec{v}) \right)=0, \ \forall \vec{\alpha}\in H_{\Gamma_{3}}(\mathrm{div};\Omega) \}.$$ 
	Similarly to the proof of Inequality~\eqref{18}, there exists $C_1>0$ such that
	\begin{eqnarray}\label{eq29}
		\mathcal{A}\big((u ,\vec{v} ), (u, \vec{v})\big)\geq C_1\left(q^{-4}\|\nabla\vec{v}\|_0^2+\|u\|_0^2\right) \quad \forall\ (u, \vec{v}) \in \Lambda.
	\end{eqnarray} 
	If $(u,\vec{v}) \in \Lambda$, then $b(\beta_i, (u,\vec{v})) = 0$ for the particular choices $\beta_1 = [S, 0]^\top$ and $\beta_2 = [0, S]^\top$, where $S$ is the function defined in Lemma~\ref{Lemma-coercivity-poinc}. In other words, $\int_{\Omega}\vec{\beta}_i\cdot \vec{v}+\int_{\Omega}u\nabla\cdot\vec{\beta}_i=0$ for $i=1,2$, which gives
	\[\left|\int_{\Omega}Sv_1\right|=\left|\int_{\Omega}S_xu\right|\leq \|S_x\|_0\|u\|_0\leq C_2\|u\|_0,\]
	and		
	\[ \left|\int_{\Omega}Sv_2\right|=\left|\int_{\Omega}S_yu\right|\leq \|S_y\|_0\|u\|_0\leq C_2\|u\|_0.\]

	As $S$ has a positive mean, we can apply the Poincar\'e inequality in the form given in Lemma~\ref{Lemma-poinc}, which leads to the fact
	\begin{eqnarray}\label{eq32}
		\|\vec{v}\|^2_0\leq C_3\left(\|u\|_0^2+\|\nabla\vec{v}\|_0^2\right). 
	\end{eqnarray} 
	Combining this with Inequality \eqref{eq29} then gives
	\begin{eqnarray*}
		\mathcal{A}\left((u,\vec{v}), (u,\vec{v})\right)\geq C_4\|(u,\vec{v})\|_{0,q,1}.
	\end{eqnarray*}
	That is, coercivity of $\mathcal{A}$ holds with an $\mathcal{O}(1)$ constant.
	
	Next, we prove the required inf-sup condition, of the form
	\begin{eqnarray*}
		I=\sup_{(u,\vec{v})\in L^2\times V}\frac{\int_{\Omega}\vec{\alpha}\cdot\vec{v}+\int_{\Omega}u\nabla\cdot\vec{\alpha}}{\sqrt{\|u\|_0^2+q^{-4}\|\vec{v}\|_1^2
		}}\geq Cq^{2}\|\vec{\alpha} \|_{\mathrm{Div}},\ \ \forall \vec{\alpha}\in H_{\Gamma_{3}}(\mathrm{div};\Omega).\label{infsup_continuum}
	\end{eqnarray*}
	In our mixed formulation, the boundary condition on $u$  is weakly enforced. Therefore, given $\vec{\alpha} = \nabla\eta + \nabla\times p$, where $p\in H_{\Gamma_{3}}^1(\Omega)$ and $\eta\in H^1_{\Gamma_{0}}$, we may choose $u=C_1\left(\nabla\cdot\vec{\alpha}-\eta\right)$, for a positive constant, $C_1$, to be specified later. This choice of $u$ might not be zero on $\Gamma_{0}$. Note that $$\|u\|_0=C_1\|\nabla\cdot\vec{\alpha}-\eta\|_0\leq C_1\left(\|\nabla\cdot \vec{\alpha}\|_0+\|\eta\|_0\right), \ \text{and}\ \int_{\Omega}u\nabla\cdot\vec{\alpha}=C_1\left(\|\nabla\cdot\vec{\alpha}\|_0^2+\|\nabla\eta\|_0^2\right).$$
	If $\Gamma_{3,1}$ is a proper subset of $\partial \Omega$, then the inf-sup condition of \cite[Lemma 2.7]{MR3660776} establishes that for all $p\in H^1_{\Gamma_{3}}(\Omega)$, there exists $\vec{\psi}\in [H^1_{\Gamma_{0,1}\cup\Gamma_{3,2}}(\Omega)]^2$ such that $\int_{\Omega}p\nabla\cdot\vec{\psi}\geq\|p\|_0^2$, and $\|\vec{\psi}\|^2_1\leq C_2\|\vec{p}\|^2_0$. Note that 
	\begin{itemize}
		\item If $\partial\Omega=\Gamma_{0}$, then $p$ is a zero-mean function, and such a $\vec{\psi}$ exists by the standard inf-sup condition for the Stokes problem with Dirichlet boundary conditions.
		\item Otherwise (so long as $\partial\Omega \neq \Gamma_{3,1}$), such a $\vec{\psi}\in V$ exists with $\vec{\psi}=\vec{0}$ on $\Gamma_{0}\cup\Gamma_{3,1}$ and $\vec{\psi}\times\vec{n}=0$ on $\Gamma_{3,2}$, following \cite[Lemma 2.7]{MR3660776}.
	\end{itemize}
	To establish the inf-sup condition needed here, we then choose $\vec{v} =[\psi_2, -\psi_1]^T$ which, by construction, belongs to $V$, giving $\nabla \cdot\vec{\psi} =\nabla\times \vec{v} $ and $\|\vec{\psi} \|_1^2=\|\vec{v}\|_1^2$.  With this,
	\begin{align*}
		I&\geq\sup_{(u,\vec{v})\in L^2\times V}\frac{\int_{\Omega}\nabla\eta\cdot\vec{v}+\int_{\Omega}p\nabla\times \vec{v}+\int_{\Omega}u\nabla\cdot\vec{\alpha}}{\sqrt{\|u\|_0^2+q^{-4}\|\vec{v}\|_1^2}}\\
		&\geq \sup_{(u,\vec{v})\in L^2\times V}\frac{\int_{\Omega}\nabla\eta\cdot\vec{v}+\int_{\Omega}p\nabla\times \vec{v}+\int_{\Omega}u\nabla\cdot\vec{\alpha}}{\sqrt{\|u\|_0^2+\|\vec{v}\|_1^2}}\\
		&\geq\frac{\|p\|_0^2+C_1\|\nabla\cdot\vec{\alpha}\|_0^2+C_1\|\nabla \eta\|_0^2-\frac{C_3}{2}\|\nabla\eta\|_0^2-\frac{1}{2C_3}\|\vec{v}\|_0^2}{\sqrt{C_2\|p\|_0^2+2C_1^2\|\eta\|_0^2+2C_1^2\|\nabla\cdot\vec{\alpha}\|_0^2}}\\&\geq\frac{\left(1-\frac{C_2}{2C_3}\right)\|p\|_0^2+C_1\|\nabla\cdot\vec{\alpha}\|_0^2+\left(C_1-\frac{C_3}{2}\right)\|\nabla \eta\|_0^2}{\sqrt{C_2\|p\|_0^2+2C_1^2\|\eta\|_0^2+2C_1^2\|\nabla\cdot\vec{\alpha}\|_0^2}},
	\end{align*} 
	where we use the facts that $\int_{\Omega}p\nabla\times\vec{v}\geq \|p\|_0^2$, $\|\vec{v}\|_1^2\leq C_2\|p\|_0^2$, and $\left|\int_{\Omega}\nabla\eta\cdot\vec{v}\right|\leq \frac{C_3}{2}\|\nabla\eta\|_0^2+\frac{1}{2C_3}\|\vec{v}\|^2_0$. Choose $C_1>\frac{C_3}{2}>\frac{C_2}{4}$. Note that $\|\eta\|_0^2\leq C_4\|\nabla\eta\|_0^2$ for some positive $C_4$ by the Poincar\'{e} inequality. Thus,
	\begin{eqnarray*}
		I
		&\geq&\frac{\left(1-\frac{C_2}{2C_3}\right)\|p\|_0^2+C_1\|\nabla\cdot\vec{\alpha}\|_0^2+\left(C_1-\frac{C_3}{2}\right)\|\nabla \eta\|_0^2}{\sqrt{C_2\|p\|_0^2+2C_1^2C_4\|\nabla\eta\|_0^2+2C_1^2\|\nabla\cdot\vec{\alpha}\|_0^2}}\\
		&\geq& C\left(\|p\|_0+\|\nabla \eta\|_0+\|\nabla\cdot\vec{\alpha}\|_0\right)\geq Cq^{2}\|\vec{\alpha}\|_{\mathrm{Div}}.
	\end{eqnarray*} 
	As a result, the inf-sup condition holds true and the three-field formulation is well-posed.
\end{proof}

\begin{remark}
	The dependence on $q$ of the continuity, coercivity, and inf-sup constants in the proofs above can lead to pessimistic error bounds for the finite-element methods developed below.  While it is tempting to try and prove convergence using other weighted $H^2$ norms or $L^2\times H^1$ product norms, we are unaware of a simple weighting of the terms in such a norm that leads to $\mathcal{O}(1)$ continuity and coercivity constants in the weighted norms.  We note that a common use case of these results will be when $Bq^4$ is an $\mathcal{O}(1)$ constant, in which case it is only the inf-sup constant and continuity bounds for $b$ that are suboptimal.
\end{remark}

\begin{corollary}\label{Unique}
	Suppose $\Omega\subset \mathbb{R}^2$ has $\partial\Omega=\left\{\cup_{i=1}^{N_1}\Gamma^{i}\right\}\cup\left\{\cup_{i=1}^{N_2}\bar{\Gamma}^{i}\right\}$, for $\Gamma^{i}=(x,a_ix+b_i)$, and $\bar{\Gamma}^{i}=(c_iy+d_i,y)$ where $N_1$ and $N_2$ are positive integers. Let $u$ and $\left(\bar{u},\vec{v},\vec{\alpha}\right)$ be the solutions of Problems \eqref{well-posed} and \eqref{keyy1}-\eqref{keyy2}, respectively. Further, assume $u$ and $\bar{u}$ are in $H^t(\Omega)$, $t\geq 4$, and $\boldsymbol{T}\in\boldsymbol{C}^{t-2}(\Omega)$, where $\boldsymbol{C}^m(\Omega)$ is the space of $2\times 2$ tensors with each component in $C^m(\Omega)$. Then $u$ and $\left(\bar{u},\vec{v},\vec{\alpha}\right)$ are equivalent in the sense that $u=\bar{u}, \ \vec{v}=\nabla u$, and $\vec{\alpha}=\nabla\cdot\left(\nabla\nabla u+q^2\boldsymbol{T}u\right)$. 
\end{corollary}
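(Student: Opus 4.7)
The plan is to construct from the primal solution $u$ a triple that satisfies the mixed formulation, and then invoke uniqueness of the mixed solution (Theorem \ref{thm:continuum_mixed}) to force the identification. Set $\vec v_0 := \nabla u$ and $\vec\alpha_0 := B\nabla\cdot(\nabla\nabla u + q^2\boldsymbol{T}u)$, both well defined under the hypotheses $u\in H^t(\Omega)$, $t\geq 4$, and $\boldsymbol{T}\in\boldsymbol{C}^{t-2}(\Omega)$. Showing $(u,\vec v_0,\vec\alpha_0)\in L^2(\Omega)\times V\times H_{\Gamma_3}(\mathrm{div};\Omega)$ and that it satisfies \eqref{5}--\eqref{7} then forces $(\bar u,\vec v,\vec\alpha)=(u,\vec v_0,\vec\alpha_0)$ by uniqueness, which is exactly the stated equivalence.

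First, I would extract the pointwise strong form \eqref{12} from the primal weak equation by testing $a(u,\phi)=\int_\Omega f\phi$ against $\phi\in C^\infty_c(\Omega)$ and integrating by parts twice, using the smoothness of $u$ and $\boldsymbol{T}$. The essential BCs $u=0$ on $\Gamma_0$ and $\nabla u=\vec 0$ on $\Gamma_1$ immediately give $\vec v_0=\vec 0$ on $\Gamma_1$; moreover $u\equiv 0$ on $\Gamma_{0,2}$ forces the tangential derivative of $u$ to vanish there, hence $\vec v_0\times\vec n=0$ on $\Gamma_{0,2}$, so $\vec v_0\in V$. To obtain $\vec\alpha_0\cdot\vec n=0$ on $\Gamma_3$, I would substitute the pointwise equation back into the identity \eqref{cc} and subtract $a(u,\phi)=\int_\Omega f\phi$; what remains is a sum of boundary integrals on $\Gamma_2\cup\Gamma_3$ that must vanish for every $\phi\in\mathscr{V}$. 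Restricting to $\phi$ localized near interior segments of $\Gamma_3$ (with $\phi$ and $\nabla\phi$ cut off away from $\Gamma_0\cup\Gamma_1$) isolates the normal divergence and yields $\nabla\cdot(\nabla\nabla u+q^2\boldsymbol{T}u)\cdot\vec n=0$ on $\Gamma_3$, i.e., $\vec\alpha_0\in H_{\Gamma_3}(\mathrm{div};\Omega)$.

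With the memberships in hand, verifying \eqref{5}--\eqref{7} is routine integration by parts. Equation \eqref{5} is the pointwise PDE \eqref{12} tested against $\phi\in L^2(\Omega)$. In \eqref{6}, substituting the definitions of $\vec v_0$ and $\vec\alpha_0$ and integrating $\int_\Omega\vec\psi\cdot\nabla\cdot(\nabla\nabla u+q^2\boldsymbol{T}u)$ by parts produces boundary integrals that cancel the explicit boundary term in \eqref{6}, while the interior terms cancel pairwise. Equation \eqref{7} collapses after integrating $\int_\Omega\vec\beta\cdot\nabla u$ by parts and using $\vec\beta\cdot\vec n=0$ on $\Gamma_3$ together with $u=0$ on $\Gamma_0$. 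Theorem \ref{thm:continuum_mixed} then delivers $(u,\vec v_0,\vec\alpha_0)=(\bar u,\vec v,\vec\alpha)$.

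The hard part is the natural-BC step above, which requires test functions in $\mathscr{V}$ whose traces on $\Gamma_3$ are dense in a suitable sense while still vanishing on $\Gamma_0$ and having vanishing gradient on $\Gamma_1$. The polygonal structure of $\partial\Omega$ assumed in the corollary ensures that $\Gamma_3$ meets $\Gamma_0\cup\Gamma_1$ only at isolated vertices, so standard bump-function constructions on interior segments of $\Gamma_3$ supply sufficiently many admissible test functions. This geometric hypothesis is the only place where the particular shape of $\partial\Omega$ enters; the remainder of the argument is purely algebraic manipulation with integration by parts.
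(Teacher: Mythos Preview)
Your argument is correct. Both you and the paper begin by constructing the triple $(u,\nabla u,B\nabla\cdot(\nabla\nabla u+q^2\boldsymbol{T}u))$ from the primal solution and invoking uniqueness of the mixed problem; the paper disposes of this forward direction in one sentence (``by direct calculation''), whereas you supply the details, in particular the extraction of the natural boundary condition $\vec\alpha_0\cdot\vec n=0$ on $\Gamma_3$ needed for membership in $H_{\Gamma_3}(\mathrm{div};\Omega)$.

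The paper then does something you do not: it also runs a converse direction, starting from the mixed solution $(\bar u,\vec v,\vec\alpha)$ and recovering the identifications $\vec v=\nabla\bar u$ and $\vec\alpha=B\nabla\cdot(\nabla\nabla\bar u+q^2\boldsymbol{T}\bar u)$ by a different device. This is where the linear-boundary hypothesis is actually exploited in the paper: one writes down the explicit polynomial
\[
D(x,y)=\prod_{i=1}^{N_1}(y-a_ix-b_i)^2\prod_{i=1}^{N_2}(x-c_iy-d_i)^2,
\]
which is smooth, strictly positive in the interior of $\Omega$, and vanishes on $\partial\Omega$. Testing \eqref{keyy2} with $\vec\beta=D(\vec v-\nabla\bar u)\in H_0(\mathrm{div};\Omega)$ forces $\int_\Omega D\,|\vec v-\nabla\bar u|^2=0$ and hence $\vec v=\nabla\bar u$; a parallel test in \eqref{keyy1} recovers $\vec\alpha$. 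Uniqueness of the \emph{primal} problem then closes the loop. So in the paper the linear-segment assumption is there to build $D$, not, as you suggest, to localize test functions for the natural-BC extraction; your use of the hypothesis is milder, and your route also avoids the additional regularity assumption $\bar u\in H^t(\Omega)$. Since both problems are already known to be uniquely solvable, the converse direction is logically redundant, so your one-way argument is the more economical of the two.
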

\begin{proof}
	First, let $u\in H^t(\Omega)$, $t\geq 4$, be the solution of Problem \eqref{well-posed}. By direct calculation, $(u,\vec{v},\vec{\alpha})$ is a solution to Problem \eqref{keyy1}-\eqref{keyy2}, which is unique by Theorem \ref{thm:continuum_mixed}. Note that $u\in H^t(\Omega)$ is sufficient to guarantee that $\vec{v}\in \left(H^{1}(\Omega)\right)^2$ and $\vec{\alpha} \in H(\mathrm{div};\Omega)$.
	
	Conversely, let $\left(\bar{u},\vec{v},\vec{\alpha}\right)$ be the solution of \eqref{keyy1}-\eqref{keyy2} with $\bar{u}\in H^t(\Omega)$. Let
	\[
	D(x,y)= \Pi_{i=1}^{N_1}\left(y-a_ix-b_i\right)^2\Pi_{i=1}^{N_2}\left(x-c_iy-d_i\right)^2,
	\]
	so that $D \in C^{\infty}(\Omega)\cap H^1_0(\Omega)$ is positive in the interior of $\Omega$. Choosing $(\phi,\vec{\psi})=\left(0,DB\nabla\cdot\left(\nabla\nabla \bar{u}+q^2\boldsymbol{T}\bar{u}\right)\right)$ and $\vec{\beta}=D\left(\vec{v}-\nabla \bar{u}\right)$ in \eqref{keyy1}--\eqref{keyy2} and integrating by parts implies that $\vec{v}=\nabla \bar{u}$ and $\vec{\alpha}=B\nabla\cdot\left(\nabla\nabla \bar{u}+q^2\boldsymbol{T}\bar{u}\right)$:
	\begin{eqnarray*}
		\int_{\Omega}\vec{\beta}\cdot\left(\vec{v}-\nabla\bar{u}\right)=\int_{\Omega}D\left(\vec{v}-\nabla\bar{u}\right)\cdot\left(\vec{v}-\nabla\bar{u}\right)=0.
	\end{eqnarray*}	
	Note that $D$ is sufficiently smooth so that
	$\vec{\beta}=D\left(\vec{v}-\nabla\bar{u}\right)\in
	H_0(\mathrm{div};\Omega)\subset
	H_{\Gamma_{3}}(\mathrm{div};\Omega)$. As
	$D\left(\vec{v}-\nabla u\right)\cdot\left(\vec{v}-\nabla u\right)$ is
	non-negative over $\Omega$, this implies that $\vec{v}=\nabla \bar{u}$. Similarly, one can prove that $\vec{\alpha} = B\nabla\cdot\left(\nabla\nabla \bar{u}+q^2\boldsymbol{T}\bar{u}\right)$.
	As above, the regularity of $\bar{u}$ is necessary to ensure that $\vec{v}$ and $\vec{\alpha}$ (as well as $\vec{\psi}$ and $\vec{\beta}$ defined above) have the regularity to satisfy these equations.  With this value for $\vec{\alpha}$, taking $\vec{\psi}=\vec{0}$ and $\phi\in \mathscr{V} \subset L^2(\Omega)$ in \eqref{keyy1} leads to the fact that $\bar{u}$ is a solution of Problem \eqref{well-posed}. Thus, $\bar{u}=u$ by the uniqueness of the solution of Problem \eqref{well-posed}. 
\end{proof}


\section{Discrete Analysis}\label{sec:discrete}

We now consider three different discretizations of the smectic density equation.  First, in~\Cref{ssec:conforming}, we consider an $H^2$-conforming discretization based on Argyris elements for $\Omega \subset \mathbb{R}^2$. The method offers optimal convergence bounds, but its analogue for $\Omega \subset \mathbb{R}^3$ is very difficult to implement. 
Thus, in~\Cref{ssec:C0IP}, we consider a $C^0$-interior-penalty method, which allows for the use of continuous Lagrange elements of any order $k\geq 2$, in both two and three dimensions. Finally, in~\Cref{ssec:mixed}, we develop a discretization of the mixed formulation \eqref{keyy1}-\eqref{keyy2} along the lines proposed in~\cite{farrell2021new}, that also offers some anticipated advantages, especially in the construction of preconditioners (to be considered in future work).
\subsection{Conforming Methods}\label{ssec:conforming}
We first consider the case of $\Omega \subset\mathbb{R}^2$ with full Neumann boundary conditions, where $\partial\Omega=\Gamma_{3,2}$. While several choices of conforming elements are possible, we focus on Argyris elements, $\text{ARG}_5(\Omega, \mathcal{T}_h)$, which arise from choosing a basis for the 21 degrees of freedom for a fifth-order polynomial space, $CG_5(\Omega, T)$, on each triangle, $T$, in such a way as to ensure that the resulting space is $H^2$-conforming \cite{kirby2012common}. The weak form is to find $u_h\in \text{ARG}_5(\Omega, \mathcal{T}_h)$ such that \cite{MR0520174}

\begin{eqnarray}\label{eq30}
	a(u_h,\phi_h)=F(\phi_h), \ \ \forall \phi_h\in \text{ARG}_5(\Omega, \mathcal{T}_h),
\end{eqnarray} 
where $a$ is defined in \eqref{eq11} and $F(\phi_h)$ is defined in \eqref{RHS_func}.
\begin{corollary}\label{cor3}
	Let $f\in L^2(\Omega)$, and let $\{\mathcal{T}_h\}$ be a family of quasiuniform meshes of $\Omega$.  Problem \eqref{eq30} is well-posed for $\partial\Omega=\Gamma_{3,2}$. 
	Moreover, if $hq\leq 1$ and $u\in H^t(\Omega)$ for $3 \leq t \leq 6$ is the solution of Problem \eqref{eq11}, then 
	\begin{equation}\label{eq31}
		\|u-u_h\|_{2,q}\leq CBq^2h^{t-2}|u|_t.
	\end{equation}
\end{corollary}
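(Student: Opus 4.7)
The plan is to exploit $H^2$-conformity of the Argyris space to make this essentially a conforming finite-element argument, and then chase the $q$-dependence carefully through C\'ea's lemma and standard Argyris interpolation estimates.

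For the well-posedness half, I would observe that $\text{ARG}_5(\Omega,\tau_h) \subset H^2(\Omega)$, and, since $\partial\Omega = \Gamma_{3,2}$, we have $\Gamma_0 = \Gamma_1 = \emptyset$, so $\mathscr{V} = H^2(\Omega)$. Thus the discrete space is a closed subspace of $\mathscr{V}$, and both the continuity of the right-hand side $F$ and the continuity and coercivity of $a$ with respect to $\|\cdot\|_{2,q}$ are inherited verbatim from Theorem \ref{thm3}. Lax--Milgram on the subspace then gives existence and uniqueness of $u_h$.

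For the error estimate, the next step is standard C\'ea-type reasoning on the conforming subspace. Writing $\|u-u_h\|_{2,q}^2 \leq C_{\text{coerc}}^{-1} a(u-u_h,u-u_h) = C_{\text{coerc}}^{-1} a(u-u_h,u-v_h)$ by Galerkin orthogonality, for any $v_h \in \text{ARG}_5$, and invoking continuity, one obtains
\[
\|u - u_h\|_{2,q} \leq C\, \frac{C_{\text{cont}}}{C_{\text{coerc}}}\, \inf_{v_h \in \text{ARG}_5} \|u - v_h\|_{2,q}.
\]
From the proof of Theorem \ref{thm3}, $C_{\text{cont}} = \mathcal{O}(Bq^4)$ and $C_{\text{coerc}} = \mathcal{O}(1)$, so the prefactor is $\mathcal{O}(Bq^4)$.

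To estimate the best-approximation error, I would take $v_h = I_h u$, the standard Argyris interpolant, and use the textbook bounds $|u-I_h u|_j \leq C h^{t-j}|u|_t$ for $0 \leq j \leq 2$ and $3 \leq t \leq 6$. Plugging these into the definition of $\|\cdot\|_{2,q}$ yields
\[
\|u - I_h u\|_{2,q}^2 \leq C\left(q^{-4} h^{2(t-2)} + q^{-4} h^{2(t-1)} + h^{2t}\right)|u|_t^2.
\]
The working assumption $hq \leq 1$ is exactly what is needed to absorb the last two terms into the first: $h^{2t} = h^{2(t-2)}\cdot h^4 \leq h^{2(t-2)} q^{-4}$ and similarly $q^{-4}h^{2(t-1)} \leq q^{-4}h^{2(t-2)}q^{-2} \leq q^{-4}h^{2(t-2)}$. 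This gives $\|u-I_h u\|_{2,q} \leq C q^{-2} h^{t-2}|u|_t$. Combining with the C\'ea bound then produces $\|u-u_h\|_{2,q} \leq C B q^4 \cdot q^{-2} h^{t-2} |u|_t = C B q^2 h^{t-2}|u|_t$, which is precisely \eqref{eq31}.

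I do not expect any serious obstacles here; the whole argument is conforming. The only real bookkeeping is making sure the $q$-weights in $\|\cdot\|_{2,q}$ are used consistently and that the resolution hypothesis $hq \leq 1$ is applied where needed to balance the three interpolation terms against one another. The upper restriction $t \leq 6$ is dictated by the polynomial degree of the Argyris element; the lower restriction $t \geq 3$ ensures that nodal derivatives (and hence the Argyris interpolant) are well defined pointwise via Sobolev embedding in two dimensions.
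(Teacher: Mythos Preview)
Your proposal is correct and follows essentially the same route as the paper: conformity of the Argyris space in $H^2(\Omega)=\mathscr{V}$ (since $\partial\Omega=\Gamma_{3,2}$), Lax--Milgram via Theorem~\ref{thm3}, C\'ea's lemma with the $\mathcal{O}(Bq^4)$ quasioptimality constant, and the standard Argyris interpolation bounds, with $hq\le 1$ used to absorb the lower-order terms into $q^{-2}h^{t-2}$. Your explicit bookkeeping of how $hq\le1$ enters and of the restrictions $3\le t\le 6$ is slightly more detailed than the paper's terse presentation, but the argument is the same.
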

\begin{proof}
	For $\partial\Omega=\Gamma_{3,2}$, the bilinear form $a(u_h,\phi_h)$ is symmetric, continuous and coercive and the linear form $F$ is continuous, as shown above. Since $u_h, \phi_h\in \text{ARG}_5(\Omega, \mathcal{T}_h)\subset H^2(\Omega)$, this is a conforming discretization and is well-posed following Theorem \ref{thm3} and the Lax--Milgram theorem. 
	Finally, C\'ea's lemma and standard bounds on the Argyris interpolation operator \cite{MR2322235} lead to the estimate in \eqref{eq31}, as
	\begin{eqnarray*}
		\|u-u_h\|_{2,q}&\leq& CBq^4\inf_{v_h\in\text{ARG}_{5}(\Omega,\mathcal{T}_h)}\left(q^{-2}\|\nabla\nabla(u-v_h)\|_0+q^{-2}\|\nabla(u-v_h)\|_0+\|u-v_h\|_0\right) \\
		&\leq&CBq^4\left(q^{-2}h^{t-2} + q^{-2}h^{t-1} + h^t\right)|u|_t\leq CBq^2h^{t-2}|u|_t.
	\end{eqnarray*}
\end{proof}
\begin{remark}
	We are naturally interested in how the error estimate above depends on $q$.  From the coercivity and continuity constants of Theorem \ref{thm3}, which scale as $\mathcal{O}(1)$ and $\mathcal{O}(Bq^4)$, respectively, we see that the quasioptimality constant scales like $\mathcal{O}(Bq^4)$.  When $Bq^4 = \mathcal{O}(1)$, as can be the case (see, e.g., \cite{pevnyi2014modeling}), this gives an error bound that $\|u-u_h\|_{2,q} \leq (q^{-2}h^{t-2})|u|_t$, which is optimal in $h$.  For larger values of $B$, we retain optimality in $h$, but see some degradation in $q$, as might be expected.
	Moreover, for the case of expected solutions to \eqref{eq:PSS} that behave like $e^{iq\vec{\nu}\cdot\vec{x}}$ (showing similar behaviour to the observed solutions of the generalized models in \cite{xia2021structural,pevnyi2014modeling}), we have $|u|_t \sim\mathcal{O}(q^t)$.  Considering the case of a strong solution with $u\in H^6(\Omega)$, this gives an error estimate that scales like $\mathcal{O}(Bq^{8})$, but with an $L^2$ error estimate of $h^{4}$.  Again, the value of $B$ strongly influences the impact of this scaling: when $Bq^4 = \mathcal{O}(1)$, then this necessitates choosing a mesh, $\mathcal{T}_h$, such that $hq < 1$, which is not an unreasonable requirement when $q$ is, itself, an $\mathcal{O}(1)$ constant.  If, however, $B = \mathcal{O}(1)$, the requirement on $\mathcal{T}_h$ is stricter, needing $h^4q^{8} < 1$ in order to guarantee convergence in the large $q$ limit.  While we are not interested in prohibitively large values of $q$ (as in \cite{pevnyi2014modeling, xia2021structural}, we consider $q \sim 40$), this recalls standard results in the literature on numerical approximation of solutions to the Helmholtz equation and the \textit{pollution effect} that leads to similar restrictions \cite{BAYLISS1985396, IHLENBURG19959}.
\end{remark}

\begin{remark}  The above result naturally extends to domains $\Omega\subset\mathbb{R}^3$ with three-dimensional $H^2(\Omega)$ conforming elements \cite{zhang2009}.
\end{remark}
Strongly enforcing essential boundary conditions using Argyris elements is well-known to be difficult~\cite{articl}, although extensions of Corollary \ref{cor3} would hold if we could do so.  Instead, if $\Gamma_{0,2}\cup\Gamma_{0,1}\cup\Gamma_{3,1} \neq \emptyset$, we enforce the essential boundary conditions weakly using Nitsche-type penalty methods. Then, the weak form is to find $u_h\in \text{ARG}_5(\Omega, \mathcal{T}_h)$ such that 
\begin{eqnarray}\label{32}
	A_h(u_h,\phi_h)=F(\phi_h), \ \ \forall \phi_h\in \text{ARG}_5(\Omega, \mathcal{T}_h),
\end{eqnarray} 
where
\begin{align*}
	A_h(u_h,\phi_h)=&a(u_h,\phi_h) +B \int_{\Gamma_{0}}\phi_h\nabla\cdot(\nabla\nabla u_h+q^2\boldsymbol{T}u_h)\cdot\vec{n}\\
	&- B\int_{\Gamma_{0}}u_h\nabla\cdot(\nabla\nabla \phi_h+q^2\boldsymbol{T}\phi_h)\cdot\vec{n}
	+\frac{1}{qh^3}\int_{\Gamma_{0}}u_h\phi_h\\
	&-B\int_{\Gamma_{1}}\nabla \phi_h \cdot (\nabla\nabla u_h+q^2\boldsymbol{T}u_h)\cdot\vec{n}+B\int_{\Gamma_{1}}\nabla u_h \cdot (\nabla\nabla \phi_h+q^2\boldsymbol{T}\phi_h)\cdot\vec{n}\\
	&+\frac{1}{q^3h}\int_{\Gamma_{1}}\nabla u_h\cdot \nabla \phi_h.
\end{align*} 
We prove coercivity and continuity of the bilinear form, $A_h$, in the strengthened $H^2(\Omega)$ norm, $\tnorm{\cdot}{2,q,h}$, defined as 
\begin{eqnarray}\label{33}
	\tnorm{u_h}{2,q,h}^2&=&\|u_h\|_{2,q}^2+\frac{1}{qh^3}\|u_h\|^2_{0,\Gamma_{0}}+ \frac{h^3}{q^7} \|\nabla\cdot\left(\nabla\nabla u_h+q^2\boldsymbol{T}u_h\right)\cdot\vec{n}\|^2_{0,\Gamma_{0}}\notag\\
	&&+\frac{1}{q^3h}\|\nabla u_h\|_{0,\Gamma_{1}}^2+\frac{h}{q^5}\|\left(\nabla\nabla u_h+q^2\boldsymbol{T}u_h\right)\cdot\vec{n}\|^2_{0,\Gamma_{1}}.
\end{eqnarray}
As shown below, the weights in the norm $\tnorm{\cdot}{2,q,h}$ allow us to prove optimal-in-$q$ error estimates for solutions in the space $H^t(\Omega), \ t\geq 4$.  Note that the choice of the Nitsche formulation in~\eqref{32} results in non-symmetric linear systems to be solved; while we do not focus on effective linear solvers here, this asymmetry may be seen as a downside of this approach.  However, we note that using a symmetric Nitsche formulation led to suboptimal error bounds in the analogous results to those that follow.
\begin{theorem}\label{Thm4}
	Let $f\in L^2(\Omega)$, and let $\{\mathcal{T}_h\}$ be a family of quasiuniform meshes of $\Omega$. Let $\boldsymbol{T}$ be given s.t. $|\boldsymbol{T}|^2=\boldsymbol{T}:\boldsymbol{T}\leq\mu_1$ and $|\nabla \boldsymbol{T}|^2=(\nabla \boldsymbol{T}):(\nabla \boldsymbol{T})\leq \mu_2$ pointwise on $\bar{\Omega}$. Then, there exist constants $C_1$ and $C_2$ such that for any $u_h,\phi_h\in ARG_5(\Omega, \mathcal{T}_h)$,
	\begin{align*}
		|A_h(u_h,\phi_h)| & \leq C_1Bq^4 \tnorm{u_h}{2,q,h}\tnorm{\phi_h}{2,q,h}, \\
		A_h(u_h,u_h) & \geq C_2 \tnorm{u_h}{2,q,h}^2.
	\end{align*}
	Moreover, Problem \eqref{32} is well-posed over $ARG_5(\Omega,\mathcal{T}_h)$.  
\end{theorem}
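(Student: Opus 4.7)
The plan is to apply the non-symmetric Lax--Milgram theorem to $A_h$ on the finite-dimensional space $\text{ARG}_5(\Omega,\tau_h)$; since $F$ is continuous on $L^2(\Omega)\supset\text{ARG}_5(\Omega,\tau_h)$, well-posedness reduces to the two estimates above. The choice of opposite signs in the $\Gamma_0$ (respectively $\Gamma_1$) consistency and adjoint-consistency integrals in $A_h$ is the crucial structural feature: when $u_h=\phi_h$ these pairs cancel identically, leaving
\[
A_h(u_h,u_h)=a(u_h,u_h)+\frac{1}{qh^3}\|u_h\|^2_{0,\Gamma_0}+\frac{1}{q^3h}\|\nabla u_h\|^2_{0,\Gamma_1},
\]
which removes the need for the ``sufficiently large penalty'' condition that a symmetric Nitsche formulation would impose.

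For continuity I would first observe that $\|u_h\|_{2,q}\leq\tnorm{u_h}{2,q,h}$, so $a(u_h,\phi_h)$ inherits the $\mathcal{O}(Bq^4)$ continuity bound of \Cref{thm3} directly. Each of the four boundary Nitsche integrals is then handled by Cauchy--Schwarz on the relevant piece of the boundary, and the exponents in \eqref{33} are calibrated precisely so that the weights pair up to produce a factor of $q^4$: on $\Gamma_0$, $(qh^3)^{1/2}(q^7/h^3)^{1/2}=q^4$ arises from splitting $\|\phi_h\|_{0,\Gamma_0}\|\nabla\cdot(\nabla\nabla u_h+q^2\boldsymbol{T}u_h)\cdot\vec{n}\|_{0,\Gamma_0}$, and analogously $(q^3h)^{1/2}(q^5/h)^{1/2}=q^4$ on $\Gamma_1$. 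The two penalty integrals match $\tnorm{\cdot}{2,q,h}^2$ term-for-term; the missing $Bq^4$ factor is supplied by $Bq^4\geq s$, yielding the claimed global bound.

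For coercivity, the argument of \Cref{thm3} that produces $a(u,u)\geq C\|u\|_{2,q}^2$ uses only \Cref{Lemma-poinc} with $j=2$, $\xi(u)=\|u\|_0$, and the assumption $sq^{-4}\leq B\leq 1$; no boundary conditions on $u$ are required, so the bound applies to any $u_h\in\text{ARG}_5(\Omega,\tau_h)$. Combined with the two penalty contributions from the cancellation above, this recovers three of the five terms of $\tnorm{u_h}{2,q,h}^2$. The remaining two pieces,
\[
\frac{h^3}{q^7}\|\nabla\cdot(\nabla\nabla u_h+q^2\boldsymbol{T}u_h)\cdot\vec{n}\|^2_{0,\Gamma_0}\ \text{and}\ \frac{h}{q^5}\|(\nabla\nabla u_h+q^2\boldsymbol{T}u_h)\cdot\vec{n}\|^2_{0,\Gamma_1},
\]
must be bounded above by $\|u_h\|_{2,q}^2$. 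For this, I would apply the polynomial discrete trace bound $\|p\|^2_{0,\partial\tau}\leq Ch^{-1}\|p\|^2_{0,\tau}$ on each boundary element to the relevant derivative expression, use the inverse estimate $\|\nabla^3 u_h\|_{0,\tau}\leq Ch^{-1}\|\nabla^2 u_h\|_{0,\tau}$ for the third-order $\Gamma_0$ term, and absorb $|\boldsymbol{T}|^2\leq\mu_1$ and $|\nabla\boldsymbol{T}|^2\leq\mu_2$ pointwise. Summing over elements and inserting the $h/q$ weights produces, for $\Gamma_0$, a combination of $q^{-7}\|\nabla^2 u_h\|_0^2$, $h^2q^{-3}\|\nabla u_h\|_0^2$, $h^2 q^{-3}\|u_h\|_0^2$, and for $\Gamma_1$ the simpler $q^{-5}\|\nabla^2 u_h\|_0^2$ and $q^{-1}\|u_h\|_0^2$; under a mild mesh condition such as $hq\leq 1$ (already in force for \Cref{cor3}), every term is dominated by the corresponding weighted term in $\|u_h\|_{2,q}^2$. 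Hence $A_h(u_h,u_h)\geq C_2\tnorm{u_h}{2,q,h}^2$ with $C_2$ independent of $h$ and $q$, and well-posedness of \eqref{32} follows from the non-symmetric Lax--Milgram theorem on the finite-dimensional space $\text{ARG}_5(\Omega,\tau_h)$.

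The main obstacle is this last bookkeeping step: the $q^2$ factor inside $\nabla\cdot(\nabla\nabla u_h+q^2\boldsymbol{T}u_h)\cdot\vec{n}$ threatens to push the $\Gamma_0$ boundary bound past the $q^{-4}$ weights available in $\|u_h\|_{2,q}^2$, and it is precisely this tension that pins down the specific $q$-exponents chosen in the norm \eqref{33} and dictates the mild mesh condition required.
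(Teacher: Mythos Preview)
Your approach is essentially identical to the paper's: exploit the antisymmetric Nitsche structure so that $A_h(u_h,u_h)=a(u_h,u_h)+\text{penalties}$, obtain continuity by termwise Cauchy--Schwarz with the calibrated weights, import the coercivity of $a$ from \Cref{thm3}, and then control the two auxiliary boundary terms in $\tnorm{\cdot}{2,q,h}$ by $\|u_h\|_{2,q}^2$ via discrete trace and inverse inequalities.

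The only substantive discrepancy is that you conclude the $\Gamma_0$ bookkeeping forces a mesh condition $hq\leq 1$, while the theorem as stated carries no such hypothesis, and the paper's proof avoids it. The issue is the term $h^2q^{-3}\|\nabla u_h\|_0^2$ coming from $q^2\boldsymbol{T}\nabla u_h$ inside $\nabla\cdot(q^2\boldsymbol{T}u_h)$: you match it against $q^{-4}\|\nabla u_h\|_0^2$ in $\|u_h\|_{2,q}^2$, which indeed requires $h^2q\leq C$. The paper instead applies \emph{one more} inverse estimate, $\|\nabla u_h\|_0^2\leq Ch^{-2}\|u_h\|_0^2$, turning the offending term into $q^{-3}\|u_h\|_0^2$, which is bounded by $\|u_h\|_0^2$ using only $q\geq 1$. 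Concretely, the paper packages this as
\[
\|\nabla\cdot(\nabla\nabla u_h+q^2\boldsymbol{T}u_h)\cdot\vec{n}\|^2_{0,\Gamma_0}\leq \frac{Cq^4}{h^3}\|u_h\|^2_{2,q},
\]
so that multiplying by the weight $h^3/q^7$ yields $Cq^{-3}\|u_h\|_{2,q}^2\leq C\|u_h\|_{2,q}^2$ with no mesh restriction. With that single additional inverse inequality your argument proves the theorem exactly as stated.
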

\begin{proof}
	The continuity of $A_h(u_h,\phi_h)$ and $F(\phi_h)$ for $u_h,\phi_h\in \text{ARG}_5(\Omega,\mathcal{T}_h)$ follow directly from the Cauchy--Schwarz inequality applied termwise, making use of the leeway offered by the suboptimal continuity of $a(u,\phi)$ with respect to the $\|\cdot\|_{2,q}$ norm.  For example, we use the bound
	\begin{align*}
		B\int_{\Gamma_{0}}\phi_h\nabla&\cdot(\nabla\nabla u_h+q^2\boldsymbol{T}u_h)\cdot\vec{n} \leq\\ &B\left(\left(\frac{q^3}{h^3}\right)^{1/2}\|\phi_h\|_{0,\Gamma_{0}}\right)\left(\left(\frac{h^3}{q^3}\right)^{1/2}\|\nabla\cdot(\nabla\nabla u_h+q^2\boldsymbol{T}u_h)\cdot\vec{n}\|_{0,\Gamma_{0}}\right) \\
		&= Bq^4\left(\left(\frac{1}{qh^3}\right)^{1/2}\|\phi_h\|_{0,\Gamma_{0}}\right)\left(\left(\frac{h^3}{q^7}\right)^{1/2}\|\nabla\cdot(\nabla\nabla u_h+q^2\boldsymbol{T}u_h)\cdot\vec{n}\|_{0,\Gamma_{0}}\right) \\
		&\leq Bq^4 \tnorm{\phi_h}{2,q,h}\tnorm{u_h}{2,q,h},
	\end{align*}
	with similar bounds for the other terms in $A_h(u_h,\phi_h)$ (recalling that $1 \leq \frac{1}{s}Bq^4$ for $\mathcal{O}(1)$ constant, $s$).  Therefore, we focus on the proof of coercivity.
	
	Trace theorems \cite[Section 2.1.3]{MR2431403}, and standard inverse estimates \cite[Theorem 4.5.11]{MR2373954} can be used to prove the following inequalities that are needed to show coercivity of $A_h$. For $\phi_h\in \text{ARG}_{5}(\Omega, \mathcal{T}_h)$, we have
	\begin{align}
		\|\nabla\cdot\nabla\nabla \phi_h\cdot\vec{n}\|^2_{0,\Gamma_{0,2}\cup\Gamma_{3,1}}&\leq \frac{C}{h}\|\nabla\cdot\nabla\nabla \phi_h\|_0^2,\label{34}\\
		\|\nabla\cdot\nabla\nabla \phi_h\|_0^2&\leq C\|\nabla\nabla\nabla \phi_h\|_0^2\leq \frac{C}{h^2}\|\nabla\nabla \phi_h\|_0^2,\\
		\|\nabla\cdot(\nabla\nabla \phi_h+q^2\boldsymbol{T}\phi_h)\cdot\vec{n}\|^2_{0,\Gamma_{0}}&\leq C\left(\frac{1}{h^3}\|\nabla\nabla \phi_h\|_0^2+q^4\left(\frac{\mu_2}{h}\|\phi_h\|_0^2+\frac{\mu_1}{h}\|\nabla\phi_h\|_0^2\right)\right)\notag\\
		&\leq C\left(\frac{1}{h^3}\|\nabla\nabla \phi_h\|_0^2+q^4\left(\frac{\mu_2}{h}\|\phi_h\|_0^2+\frac{\mu_1}{h^3}\|\phi_h\|_0^2\right)\right)\notag\\
		&\leq \frac{C_3q^4}{h^3}\|\phi_h\|^2_{2,q}
		.\label{Ineq37}
	\end{align}
	In addition, 
	\begin{equation}
		\|(\nabla\nabla\phi_h+q^2\boldsymbol{T}\phi_h)\cdot\vec{n}\|_{0,\Gamma_{1}}^2 \leq C\left(\frac{1}{h}\|\nabla\nabla \phi_h\|_0^2+\frac{q^4\mu_1}{h}\|\phi_h\|_0^2\right)\leq \frac{C_4q^4}{h}\|\phi_h\|^2_{2,q}\label{35-},
	\end{equation}
	where the constants, $C_3$ and $C_4$, will be used below.
	From Theorem \ref{thm3}, we have that $a(u_h,u_h)\geq C_5 \|u_h\|_{2,q}^2$, for $C_5>0$.  Using this, we have
	\begin{align*}
		A_h(u_h,u_h) &\geq C_5\|u_h\|_{2,q}^2
		+\frac{1}{qh^3}\|u_h\|^2_{0,\Gamma_{0}}+\frac{1}{q^3h}\|\nabla u_h\|^2_{\Gamma_{1}}\\
		& \geq \frac{C_5}{3}\left(\|u_h\|_{2,q}^2+\frac{h^3}{C_3q^4}\|\nabla\cdot(\nabla\nabla u_h+q^2\boldsymbol{T}u_h)\cdot\vec{n}\|^2_{0,\Gamma_{0}}\right)\\
		&+\frac{C_5h}{3C_4q^4}\|(\nabla\nabla u_h+q^2\boldsymbol{T}u_h)\cdot\vec{n}\|^2_{0,\Gamma_{1}} +\frac{1}{qh^3}\|u_h\|^2_{0,\Gamma_{0}}+\frac{1}{q^3h}\|\nabla u_h\|^2_{\Gamma_{1}}\\
		& \geq\frac{C_5}{3} \|u_h\|_{2,q}^2+ \frac{C_5h^3}{3C_3q^7}\|\nabla\cdot(\nabla\nabla u_h+q^2\boldsymbol{T}u_h)\cdot\vec{n}\|^2_{0,\Gamma_{0}}\\
		&+\frac{C_5 h}{3C_4q^5}\|(\nabla\nabla u_h+q^2\boldsymbol{T}u_h)\cdot\vec{n}\|^2_{0,\Gamma_{1}} +\frac{1}{qh^3}\|u_h\|^2_{0,\Gamma_{0}}+\frac{1}{q^3h}\|\nabla u_h\|^2_{\Gamma_{1}}.
	\end{align*}
	That is, there exists a constant $C_2=\min\{\frac{C_5}{3}, \frac{C_5}{3C_3}, \frac{C_5}{3C_4}, 1\}$ such that 	$A_h(u_h,u_h)  \geq C_2 \tnorm{u_h}{2,q,h}^2$. 	
\end{proof}
\begin{remark}[Galerkin orthogonality]\label{galerkin}
	Let $u\in H^{t}(\Omega),\ t\geq 4$ be the solution of \eqref{12}. If $u_h\in \text{ARG}_5(\Omega,\mathcal{T}_h)$ is the solution of \eqref{32}, then $A_h(u-u_h, \phi_h)=0$, $\forall \phi_h\in \text{ARG}_5(\Omega,\mathcal{T}_h)$.
\end{remark}
We carry out standard error analysis using the Galerkin orthogonality property, following the same approach as used for Poisson's equation in \cite{MR1479170}.
\begin{lemma}\label{Lemma4}
	Let the assumptions of Theorem \ref{Thm4} hold.  Given functions $v\in H^t(\Omega), \ t\geq 4$ and $v_h\in \text{ARG}_5(\Omega,\mathcal{T}_h)$, there exists a positive constant, $C$, such that   
	\begin{align}
		\tnorm{v-v_h}{2,q,h}&\leq \frac{C}{h^2}\left(\|v-v_h\|_0+\frac{h}{q}\|v-v_h\|_1+\frac{h^2}{q^2}\|v-v_h\|_2+\frac{h^3}{q^3}\sum_{\tau\in \mathcal{T}_h}\|v-v_h\|_{3,\tau}\right.\notag\\
		&\phantom{xxxxxxxxxxxxxxxxxxxxxxxxxxxxxxxxxxxxx}\left.+\frac{h^4}{q^4}\sum_{\tau\in \mathcal{T}_h}\|v-v_h\|_{4,\tau}\right).
	\end{align}
\end{lemma}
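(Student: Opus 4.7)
The plan is to bound each of the five constituents of $\tnorm{v-v_h}{2,q,h}^2$ separately and then take a square root at the end. Writing $w=v-v_h$ and recalling the definition in~\eqref{33}, these constituents are the interior piece $\|w\|_{2,q}^2$, two boundary integrals over $\Gamma_0$, and two over $\Gamma_1$. Squaring the right-hand side of the inequality and using the elementary estimate $(a_0+\cdots+a_4)^2\leq 5(a_0^2+\cdots+a_4^2)$, the target is to dominate each constituent by a term of the form $\frac{C}{h^4}\cdot\frac{h^{2j}}{q^{2j}}\|w\|_j^2$ (or its piecewise analogue) for some $0\leq j\leq 4$.

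For the interior piece $\|w\|_{2,q}^2 = q^{-4}\|\nabla\nabla w\|_0^2+q^{-4}\|\nabla w\|_0^2+\|w\|_0^2$, the bounds $|w|_j\leq\|w\|_j$ for $j=1,2$, together with $h$ bounded and $q\geq 1$, subsume each summand into the target term with $j=0,1,2$. The lower-order boundary terms $\frac{1}{qh^3}\|w\|_{0,\Gamma_0}^2$ and $\frac{1}{q^3h}\|\nabla w\|_{0,\Gamma_1}^2$ follow from one application of Corollary~\ref{Trace-cor}; the resulting $\frac{h}{q}$ and $\frac{q}{h}$ factors line up exactly with the target after multiplication by the global weights $\frac{1}{qh^3}$ and $\frac{1}{q^3h}$ respectively.

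The main work lies in the higher-order boundary integrals. For $\frac{h^3}{q^7}\|\nabla\cdot(\nabla\nabla w+q^2\boldsymbol{T}w)\cdot\vec{n}\|_{0,\Gamma_0}^2$, I would split the integrand as $\nabla\cdot\nabla\nabla w\cdot\vec{n}+q^2\nabla\cdot(\boldsymbol{T}w)\cdot\vec{n}$. The first piece equals the normal derivative of $\Delta w$; bounding it requires applying the trace inequality element-wise to the vector $\nabla\Delta w$, since $v_h\notin H^3(\Omega)$ globally. The resulting $\|\nabla\Delta w\|_{0,\tau}$ and $\|\nabla\nabla\Delta w\|_{0,\tau}$ contributions land on the $\sum_\tau\|w\|_{3,\tau}^2$ and $\sum_\tau\|w\|_{4,\tau}^2$ target terms with precisely the right scaling after multiplication by the global weight. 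For the second piece, the essential trick is to apply the product rule $\nabla\cdot(\boldsymbol{T}w)=(\nabla\cdot\boldsymbol{T})w+\boldsymbol{T}^{\top}\nabla w$ \emph{before} invoking the trace inequality, so that only the pointwise bounds $|\boldsymbol{T}|^2\leq\mu_1$ and $|\nabla\boldsymbol{T}|^2\leq\mu_2$ from Theorem~\ref{Thm4} are needed; the hypothesis does not include a bound on $\nabla\nabla\boldsymbol{T}$. An analogous split treats $\frac{h}{q^5}\|(\nabla\nabla w+q^2\boldsymbol{T}w)\cdot\vec{n}\|_{0,\Gamma_1}^2$, applying the element-wise trace to $\nabla\nabla w$ and absorbing the $\boldsymbol{T}w$ factor through the pointwise bound on $|\boldsymbol{T}|^2$ followed by trace on $w$ alone.

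The main obstacle is the careful bookkeeping of roughly a dozen separately estimated terms after the splits, each with its own power of $h$ and $q$, verifying that each one lands inside a term of the target bound. The key substantive subtlety is the one noted above: the product-rule split of $\nabla\cdot(\boldsymbol{T}w)$ avoids differentiating $\boldsymbol{T}$ twice, which is not controlled under the hypotheses of Theorem~\ref{Thm4}. Once all individual estimates are collected, taking the square root and using $\sqrt{a^2+b^2}\leq a+b$ on the grouped sums yields the stated inequality.
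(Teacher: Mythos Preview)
Your proposal is correct and follows essentially the same approach as the paper's proof: split the norm into its five constituents, apply the element-wise trace inequality of Corollary~\ref{Trace-cor} to each boundary integral (working on $\partial\tau\cap\Gamma_0$ and $\partial\tau\cap\Gamma_1$ since $v_h\notin H^3(\Omega)$ globally), and use the product-rule expansion $\nabla\cdot(\boldsymbol{T}w)=(\nabla\cdot\boldsymbol{T})w+\boldsymbol{T}\nabla w$ so that only the bounds $|\boldsymbol{T}|^2\leq\mu_1$ and $|\nabla\boldsymbol{T}|^2\leq\mu_2$ are needed. The paper carries out exactly this computation in~\eqref{37}--\eqref{40}, sums over $\tau\in\tau_h$, and takes the square root as you describe.
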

\begin{proof}
	Define $r = v-v_h$, and note that $ r\in H^4(\tau), \forall \tau\in\mathcal{T}_h$.
	Applying Corollary \ref{Trace-cor} to the boundary integrals in \eqref{33} yields
	\begin{align}
		\frac{h^3}{q^7}\|\nabla\cdot(\nabla\nabla r+q^2\boldsymbol{T}r)\cdot\vec{n}\|^2_{0,\partial\tau\cap\left(\Gamma_{0}\right)} &\leq C\frac{h^3}{q^7}\|\nabla\nabla\nabla r\|_{0,\partial\tau\cap\left(\Gamma_{0}\right)}^2+ \mu_1\frac{h^3}{q^3}\|\nabla r\|_{0,\partial\tau\cap\left(\Gamma_{0}\right)}^2\notag\\&+  \mu_2\frac{h^3}{q^3}\|r\|_{0,\partial\tau\cap\left(\Gamma_{0}\right)}^2\notag\\
		&\leq C\frac{h^3}{q^7}\left(\frac{h}{q} \|\nabla \nabla\nabla\nabla r\|_{0,\tau}^2+\frac{q}{h }\|\nabla\nabla\nabla r\|_{0,\tau}^2\right)\notag\\
		&+C\frac{h^3}{q^3}\left(\frac{h}{q} \|\nabla \nabla r\|_{0,\tau}^2+\frac{q}{h }\|\nabla r\|_{0,\tau}^2+\frac{q}{h }\|r\|_{0,\tau}^2\right)\notag\\
		&\leq C\left(\frac{h^4}{q^8} \|\nabla \nabla\nabla\nabla r\|_{0,\tau}^2+\frac{h^2}{q^6}\|\nabla\nabla\nabla r\|_{0,\tau}^2\right)\notag\\
		&+ C\left(\frac{h^4}{q^4} \|\nabla \nabla r\|_{0,\tau}^2+\frac{h^2}{q^2}\|\nabla r\|_{0,\tau}^2+\frac{h^2}{q^2}\|r\|_{0,\tau}^2\right)\label{37}
	\end{align}
	\begin{align}
		\frac{h}{q^5}\|(\nabla\nabla r+q^2\boldsymbol{T}r)\cdot\vec{n}\|_{0,\partial\tau\cap\left(\Gamma_{1}\right)}^2 &\leq C\frac{h}{q^5}\|\nabla\nabla r\|_{0,\partial\tau\cap\left(\Gamma_{1}\right)}^2+C\mu_1\frac{h}{q}\|r\|_{0,\partial\tau\cap\left(\Gamma_{1}\right)}^2\notag\\
		&\leq C\left(\frac{h^2}{q^6}\|\nabla\nabla\nabla r\|_{0,\tau}^2+\frac{1}{q^4}\|\nabla\nabla r\|_{0,\tau}^2\right)\notag\\
		&+C\left(\frac{h^2}{q^2} \|\nabla r\|_{0,\tau}^2+\|r\|_{0,\tau}^2\right)\label{trace-Hessian},
	\end{align}
	\begin{align}
		\frac{1}{q^3h}\|\nabla r\|^2_{0,\partial\tau\cap\left(\Gamma_{1}\right)}&\leq {C}\left( \frac{1}{q^4}\|\nabla\nabla r\|_{0,\tau}^2+\frac{1}{q^2h^2}\|\nabla r\|_{0,\tau}^2\right)\label{trace-grad},\\
	\end{align}
	and
	\begin{align}
		\frac{1}{qh^3}\|r\|^2_{0,\partial\tau\cap(\Gamma_{0})}&\leq C\left(\frac{1}{q^2h^2}\|\nabla r\|_{0,\tau}^2+\frac{1}{h^4}\|r\|_{0,\tau}^2\right)\label{40}.
	\end{align}
	Summing inequalities \eqref{37}-\eqref{40} over $\tau\in\mathcal{T}_h$ and then combining these with the fact that $r\in H^2(\Omega)$ leads to
	\begin{align}
		\tnorm{v-v_h}{2,q,h}^2&\leq \frac{C}{h^4}\left(\|v-v_h\|_0^2+\frac{h^2}{q^2}\|v-v_h\|_1^2+\frac{h^4}{q^4}\|v-v_h\|^2_2+\frac{h^6}{q^6}\sum_{\tau\in \mathcal{T}_h}\|v-v_h\|_{3,\tau}^2\right.\notag\\
		&\phantom{xxxxxxxxxxxxxxxxxxxxxxxxxxxxxxxxx}\left.+\frac{h^8}{q^8}\sum_{\tau\in \mathcal{T}_h}\|v-v_h\|_{4,\tau}^2\right).
	\end{align}
	Taking the square root of both sides and using the fact that $\sqrt{\sum_{i=1}^{4}x^2_i}\leq \sum_{i=1}^{4}x_i, \ \forall x_i>0$ completes the proof.
\end{proof}
\begin{theorem}\label{thm}
	Let the assumptions of Lemma \ref{Lemma4} hold, let $u_h\in \text{ARG}_5(\Omega,\mathcal{T}_h)$ be the solution of \eqref{32}, and let $u\in H^t(\Omega), \ 4\leq t\leq 6$ be the solution of \eqref{12}. Then, 	
	\begin{equation}
		\tnorm{u-u_h}{2,q,h}\leq CBq^{4}h^{t-2}|u|_{t}.
	\end{equation}
\end{theorem}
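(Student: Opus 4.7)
The plan is a standard Céa-type argument that combines the coercivity and continuity bounds from Theorem~\ref{Thm4} with Galerkin orthogonality (Remark~\ref{galerkin}), and then converts the resulting best-approximation error into a concrete rate via Lemma~\ref{Lemma4} and standard Argyris interpolation estimates.

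Let $\Pi_h u \in \text{ARG}_5(\Omega, \tau_h)$ be the Argyris interpolant of $u$, which is well-defined for $u \in H^t(\Omega)$ with $t \geq 4$ (since $H^4(\Omega) \hookrightarrow C^2(\bar\Omega)$ in two dimensions, so all nodal degrees of freedom make sense). By the coercivity bound of Theorem~\ref{Thm4} applied to $\Pi_h u - u_h \in \text{ARG}_5(\Omega,\tau_h)$, and then by Galerkin orthogonality $A_h(u - u_h,\,\Pi_h u - u_h) = 0$,
\begin{equation*}
C_2 \tnorm{\Pi_h u - u_h}{2,q,h}^2 \leq A_h(\Pi_h u - u_h,\, \Pi_h u - u_h) = A_h(\Pi_h u - u,\, \Pi_h u - u_h).
\end{equation*}
Applying the continuity bound of Theorem~\ref{Thm4} to the right-hand side and dividing by $\tnorm{\Pi_h u - u_h}{2,q,h}$ yields $\tnorm{\Pi_h u - u_h}{2,q,h} \leq (C_1 B q^4 / C_2)\, \tnorm{u - \Pi_h u}{2,q,h}$, after which a triangle inequality gives
\begin{equation*}
\tnorm{u - u_h}{2,q,h} \leq \left(1 + \frac{C_1 B q^4}{C_2}\right) \tnorm{u - \Pi_h u}{2,q,h} \leq C B q^4 \,\tnorm{u - \Pi_h u}{2,q,h},
\end{equation*}
where we used $Bq^4 \geq s$ to absorb the additive $1$.

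It then remains to estimate $\tnorm{u - \Pi_h u}{2,q,h}$. Applying Lemma~\ref{Lemma4} with $v = u$ and $v_h = \Pi_h u$, and invoking the standard element-wise Argyris interpolation estimates $\|u - \Pi_h u\|_{j,\tau} \leq C h^{t-j}|u|_{t,\tau}$ for $0 \leq j \leq 4$ and $4 \leq t \leq 6$ (with the sums over $\tau \in \tau_h$ read as broken Sobolev norms), each of the five terms inside the parentheses of Lemma~\ref{Lemma4} contributes a factor of the form $q^{-j} h^{t}|u|_t$ with $j \in \{0,1,2,3,4\}$. Since $q \geq 1$, every such factor is bounded by $h^t|u|_t$, and the overall prefactor $h^{-2}$ produces $\tnorm{u - \Pi_h u}{2,q,h} \leq C h^{t-2}|u|_t$. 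Combining this with the Céa-type bound above gives $\tnorm{u-u_h}{2,q,h} \leq C B q^4 h^{t-2}|u|_t$, as claimed.

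The principal subtlety to verify is that the continuity statement of Theorem~\ref{Thm4} — originally proved for arguments in $\text{ARG}_5(\Omega,\tau_h)$ — can be applied to the pair $(\Pi_h u - u, \Pi_h u - u_h)$, in which the first argument lies only in $H^t(\Omega) \cap \mathscr{V}$, not in the discrete space. Inspecting that proof, the continuity bound is obtained by termwise Cauchy--Schwarz directly against the weights defining $\tnorm{\cdot}{2,q,h}$, with no appeal to inverse estimates; for $u \in H^4(\Omega)$ the boundary traces $(\nabla\nabla u + q^2\boldsymbol{T}u)\cdot \vec{n}$ and $\nabla\cdot(\nabla\nabla u + q^2\boldsymbol{T}u)\cdot \vec{n}$ on $\Gamma_0 \cup \Gamma_1$ are well-defined by the standard trace theorem, so the continuity bound extends verbatim. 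Beyond this check, all remaining steps are routine, and the $Bq^4$ factor tracked above simply reflects the $q$-dependent quasi-optimality constant inherited from Theorem~\ref{thm3}.
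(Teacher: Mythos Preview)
Your proof is correct and follows essentially the same approach as the paper: a C\'ea-type argument combining coercivity, Galerkin orthogonality, and continuity to reduce to best approximation, followed by Lemma~\ref{Lemma4} and Argyris interpolation estimates. Your explicit remark that the continuity bound of Theorem~\ref{Thm4} extends to first arguments in $H^t(\Omega)$ (since it relies only on Cauchy--Schwarz, not inverse estimates) is a point the paper leaves implicit.
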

\begin{proof}
	For any $v_h\in \text{ARG}_{5}(\Omega,\mathcal{T}_h)$, by the triangle inequality, we have
	\begin{eqnarray}
		\tnorm{u-u_h}{2,q,h}\leq \tnorm{u-v_h}{2,q,h}+ \tnorm{u_h-v_h}{2,q,h}.
	\end{eqnarray}
	By the coercivity and continuity of $A_h$, and Remark \ref{galerkin}, 
	\begin{eqnarray}
		\tnorm{u_h-v_h}{2,q,h}^2&\leq& C A_h(u_h-v_h, u_h-v_h)=CA(u-v_h, u_h-v_h)\notag \\
		&\leq&CBq^4\tnorm{u-v_h}{2,q,h}\tnorm{u_h-v_h}{2,q,h}.
	\end{eqnarray}
	Therefore, $\tnorm{u_h-v_h}{2,q,h}\leq CBq^4\tnorm{u-v_h}{2,q,h}$ and 
	\begin{eqnarray}
		\tnorm{u-u_h}{2,q,h}\leq CBq^4 \left(\inf_{v_h\in\text{ARG}_{5}(\Omega,\mathcal{T}_h)}\tnorm{u-v_h}{2,q,h}\right).
	\end{eqnarray}
	Applying Lemma \ref{Lemma4} and existing bounds on the Argyris interpolation operator \cite{MR2322235} leads to the bound 
	\begin{align*}
		\tnorm{u-u_h}{2,q,h}\leq& \frac{CBq^{4}}{h^2}\inf_{v_h\in\text{ARG}_{5}(\Omega,\mathcal{T}_h)}\left(\|u-v_h\|_0 +\frac{h}{q}|u-v_h|_1 +\frac{h^2}{q^2}|u-v_h|_2 \right.\\ & \phantom{xxxxxxxxxxxxxxxxxxxxxxx}\left.+\frac{h^3}{q^3}\sum_{\tau\in \mathcal{T}_h}|u-v_h|_{3,\tau}+\frac{h^4}{q^4}\sum_{\tau\in \mathcal{T}_h}|u-v_h|_{4,\tau}\right)\\
		\leq& CBq^{4}h^{t-2}|u|_t, \ \ \text{for} \ 4\leq t\leq 6.
	\end{align*} 
\end{proof}
\begin{remark}
	Comparing this with the bound in~\Cref{cor3}, we see a slight degradation in the power of $q$, but no degradation in $h$.  Thus, if $u\in H^6(\Omega)$ and behaves like $e^{iq\vec{n}\cdot\vec{x}}$, we now seek a mesh with $h^4q^6 < 1$ when $Bq^4 = \mathcal{O}(1)$, which is still reasonable for $q \approx 40$.  Numerical results show, however, that even this estimate is pessimistic, and that a reasonable error tolerance can generally be achieved when $h = \mathcal{O}(1)$, i.e., independent of $q$.
\end{remark}
\begin{remark}
	The discrete solution, $u_h$, may fail to exactly satisfy the essential boundary conditions, with $u_h\neq0 \ \text{on}\ \Gamma_{0}$ and/or $\nabla u_h\neq0\ \text{on} \ \Gamma_{1}$. Nevertheless, the error in these terms on the boundary also converges to zero, since we have the bounds
	\begin{eqnarray}
		\|u_h\|_{0,\Gamma_{0}}=\|u-u_h\|_{0,\Gamma_{0}}\leq (qh^3)^{1/2}\tnorm{u-u_h}{2,q,h}\leq CBq^{9/2}h^{t-1/2}|u|_t,\\ 
		\|\nabla u_h\|_{0,\Gamma_{1}}=\|\nabla (u-u_h)\|_{0,\Gamma_{1}}\leq (q^3h)^{1/2}\tnorm{u-u_h}{2,q,h}\leq CBq^{11/2}h^{t-3/2}|u|_t.
	\end{eqnarray} 
\end{remark}
\begin{remark}
	Preliminary results, not reported here, showed that the symmetric version of Nitsche's method resulted in the same dependence on $h$ as above, but with worse dependence on $q$.  Notably, with the symmetric Nitsche approach, we could also recover optimal-in-$h$ convergence for the error in $u$ measured in the $L^2$-norm, but with a dramatic increase in the power of $q$ in the approximation results.  Here, we can prove only a slight improvement in the $L^2$-error estimate for $u$ using arguments similar to \cite[Proposition 5.3]{doi:10.1137/10081784X}, but such estimates have little value, since they again trade worse dependence on $q$ for better dependence on $h$.  Whether such results can be improved (e.g., using a nonsymmetric penalty-free version of Nitsche's method, as in~\cite{doi:10.1137/10081784X}) is left for future work.
\end{remark}
\subsection{C0IP methods}\label{ssec:C0IP}
We next apply a C0IP method for the primal formulation \eqref{12}, aiming to approximate the solution with a $H^1(\Omega)$-conforming function and to weakly enforce $H^2(\Omega)$-conformity. Such an interior penalty method for the biharmonic operator, with either homogeneous clamped boundary conditions or Cahn--Hilliard type boundary conditions with a vanishing corner degree of freedom (to guarantee uniqueness of the solution), was presented in \cite{MR3051409}. As in that approach, we use Nitsche-type penalty methods to implement essential boundary conditions on the gradient, but strongly impose essential boundary conditions on the solution. The Nitsche term is, consequently, added to $\Gamma_{1}$. In this case, the weak form is to find $u_h\in {CG}_{k}^{\Gamma_{0}}(\Omega,\mathcal{T}_h),\ k\geq 2$, such that 
\begin{eqnarray}\label{C0IP}
	\tilde{a}_h(u_h,\phi_h)=\int_{\Omega}f\phi_h,\ \ \forall \phi_h\in {CG}_{k}^{\Gamma_{0}}(\Omega,\mathcal{T}_h),
\end{eqnarray}  
where 
\begin{align*}
	\tilde{a}(u_h,\phi_h) &= \hat{a}(u_h,\phi_h)- B\sum_{e\in\epsilon_h\setminus\Gamma_{2}}\int_{e}\left\{\!\!\!\left\{\vec{n}\cdot\bigg(\nabla\nabla u_h+q^2\boldsymbol{T}u_h\bigg)\cdot \vec{n} \right\}\!\!\!\right\}\left[\!\!\left[\frac{\partial \phi_h}{\partial n}\right]\!\!\right]\\
	+&B  \sum_{e\in\epsilon_h\setminus\Gamma_{2}}\int_{e}\left\{\!\!\!\left\{\vec{n}\cdot\bigg(\nabla\nabla \phi_h+q^2\boldsymbol{T}\phi_h\bigg)\cdot \vec{n}\right\}\!\!\!\right\}\left[\!\!\left[\frac{\partial u_h}{\partial n}\right]\!\!\right]\\
	+&\frac{1}{q^3h}\sum_{e\in\epsilon_h\setminus\Gamma_{2}}\int_{e}\left[\!\!\left[\frac{\partial u_h}{\partial n}\right]\!\!\right]\left[\!\!\left[\frac{\partial \phi_h}{\partial n}\right]\!\!\right],
\end{align*}
and
\begin{align*}
	\hat{a}(u_h,\phi_h) &= B\sum_{\tau\in\mathcal{T}_h}\left(\int_{\tau}\nabla\nabla u_h:\nabla\nabla \phi_h+q^2\int_{\tau}\nabla\nabla u_h:\boldsymbol{T}\phi_h+q^2\int_{\tau}\nabla\nabla \phi_h:\boldsymbol{T}u_h\right)\\
	&+\int_{\Omega}(Bq^4\boldsymbol{T}:\boldsymbol{T}+m)u_h\phi_h.
\end{align*}
Here, $\left[\!\left[\cdot\right]\!\right], \left\{\!\!\left\{\cdot \right\}\!\!\right\} $ denote the standard jump and average functions defined in \cite{MR3873982,MR2142191, MR3051409,MR345432}, and $\mathcal{T}_h$ and $\epsilon_h$ are the sets of cells and edges (including the boundary) in the mesh, respectively. We define the following norm on
$CG_{k}^{\Gamma_{0}}(\Omega,\mathcal{T}_h)$, 
\begin{align}
	\tnorm{u_h}{h}^2&= q^{-4}\left( \sum_{\tau\in \mathcal{T}_h}|u_h|_{2,\tau}^2+\|\nabla u_h\|_0^2\right)+\|u_h\|_0^2\\
	&+\frac{h}{q^5}\sum_{e\in\epsilon_h\setminus\Gamma_{2}}\int_{e}\left\{\!\!\!\left\{\vec{n}\cdot\bigg(\nabla\nabla u_h+q^2\boldsymbol{T}u_h\bigg)\cdot \vec{n}\right\}\!\!\!\right\}^2+\frac{1}{q^3h}\sum_{e\in\epsilon_h\setminus\Gamma_{2}}\int_{e}\left[\!\!\left[\frac{\partial u_h}{\partial n}\right]\!\!\right]^2.\label{C0-norm}
\end{align}
The following inequalities are useful in proving the well-posedness of \eqref{C0IP}.
\begin{lemma}\label{Lemma3}
	Let $\{\mathcal{T}_h\}$ be a family of quasiuniform meshes of $\Omega$ and $\boldsymbol{T}:\boldsymbol{T}\leq \mu_1$ pointwise on $\bar{\Omega}$.
	\begin{itemize}
		\item The $H^2$-discrete Poincar\'e inequality is that there exists $C_1>0$ such that
		\begin{equation}\label{355}
			\| \nabla u_h\|_0^2\leq C_1 \bigg(\|u_h\|_0^2+\sum_{\tau\in \mathcal{T}_h}|u_h|_{2,\tau}^2+\frac{1}{h}\sum_{e\in\epsilon_h\setminus\partial\Omega}\int_{e}\left[\!\!\left[\frac{\partial u_h}{\partial n}\right]\!\!\right]^2\bigg), \ \ \forall \phi_h\in CG_{k}(\Omega,\mathcal{T}_h).
		\end{equation}
		\item There exists $C_2>0$ such that 
		\begin{eqnarray}\label{eq55}
			\sum_{e\in\epsilon_h}\int_{e}\left\{\!\!\!\left\{\vec{n}\cdot\bigg(\nabla\nabla
			\phi_h+q^2\boldsymbol{T}\phi_h\bigg)\cdot
			\vec{n}\right\}\!\!\!\right\}^2\leq \frac{C_2}{h}\left(\bigg(\sum_{\tau\in \mathcal{T}_h}|\phi_h|^2_{2,\tau}\bigg)+q^4\|\phi_h\|^2_0\right). \label{36}
		\end{eqnarray}
	\end{itemize}
\end{lemma}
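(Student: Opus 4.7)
The plan is to treat the two bullets essentially independently. The first is a discrete $H^2$-Poincar\'e inequality which I would prove by the standard enrichment argument used throughout the C0IP literature; the second is a trace-type bound that follows from scaled trace inequalities combined with inverse estimates applied term-by-term on each element.

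For the first bullet, inequality \eqref{355}, my plan is to reduce the broken estimate to the continuum Poincar\'e inequality of \Cref{Lemma-poinc} (with $j=2$ and $\xi(v)=\|v\|_0$) via a bounded enrichment operator. Specifically, I would construct (or cite) an averaging/macro-element enrichment $E_h: CG_{k}(\Omega,\tau_h)\to H^2(\Omega)$ (e.g., of Hsieh--Clough--Tocher or Argyris type, as used by Brenner and coauthors for C0IP biharmonic) satisfying the local estimate
\[
\sum_{\tau\in\tau_h}\left(h^{-4}\|u_h-E_h u_h\|_{0,\tau}^2+h^{-2}|u_h-E_h u_h|_{1,\tau}^2+|u_h-E_h u_h|_{2,\tau}^2\right)\leq \frac{C}{h}\sum_{e\in\epsilon_h\setminus\partial\Omega}\int_e\left[\!\!\left[\frac{\partial u_h}{\partial n}\right]\!\!\right]^2.
\]
Applying \Cref{Lemma-poinc} to $E_h u_h \in H^2(\Omega)$ yields $\|\nabla E_h u_h\|_0^2 \leq C(|E_h u_h|_2^2+\|E_h u_h\|_0^2)$. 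A triangle inequality then splits each of $\|\nabla u_h\|_0$, $|E_h u_h|_2$, and $\|E_h u_h\|_0$ into its $u_h$-part (the broken norms appearing in \eqref{355}) plus a $u_h - E_h u_h$ remainder; the latter is absorbed using the enrichment bound above, producing exactly the jump term on the right-hand side of \eqref{355}.

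For the second bullet, inequality \eqref{eq55}, I would argue edge-by-edge. On each element $\tau$, the multiplicative trace inequality gives $\|w\|_{0,\partial\tau}^2\leq C(h^{-1}\|w\|_{0,\tau}^2+h\|\nabla w\|_{0,\tau}^2)$. Applying this with $w=\vec{n}\cdot\nabla\nabla\phi_h\cdot\vec{n}$, using $|\vec{n}|=1$, and then absorbing the gradient term via the inverse estimate $|\phi_h|_{3,\tau}\leq Ch^{-1}|\phi_h|_{2,\tau}$ on the polynomial space, yields a bound of the form $Ch^{-1}|\phi_h|_{2,\tau}^2$. The $q^2\boldsymbol{T}\phi_h$ piece is handled by $|\boldsymbol{T}|\leq\sqrt{\mu_1}$ pointwise, applying the same trace inequality to $\phi_h$, and using an inverse estimate $\|\nabla\phi_h\|_{0,\tau}\leq Ch^{-1}\|\phi_h\|_{0,\tau}$, contributing $Cq^4 h^{-1}\|\phi_h\|_{0,\tau}^2$. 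Bounding $\{\!\!\{\cdot\}\!\!\}^2$ on each edge by twice the sum of squared one-sided values (each edge borders at most two elements) and summing over $\epsilon_h$ produces \eqref{eq55}.

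The main obstacle is the first bullet: locating or constructing an enrichment operator that delivers the required local bounds solely in terms of interior normal-derivative jumps, with a constant independent of $h$, $q$, and carefully tracked in the polynomial degree $k$. Such constructions are classical for $CG_k$ spaces in the C0IP literature, but verifying the precise local estimate is where essentially all the work sits. The second bullet, by contrast, is routine bookkeeping with scaled trace and inverse inequalities on polynomial spaces. Once both bullets are in hand, they feed directly into the coercivity and continuity arguments for $\tilde{a}_h$ in the norm $\tnorm{\cdot}{h}$.
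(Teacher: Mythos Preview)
Your proposal is correct, and for the second bullet it is essentially the paper's approach: the paper invokes a known inequality (Brenner--Gudi--Sung, Inequality~(3.20) in \cite{MR3051409}) for the Hessian contribution and a standard inverse trace inequality for the $q^2\boldsymbol{T}\phi_h$ piece, which is exactly what your trace-plus-inverse argument yields. One cosmetic remark: since $\vec{n}$ is defined only on the face, your trace step should be applied componentwise to $\nabla\nabla\phi_h$ (or after extending $\vec{n}$ as a constant to the element, legitimate here since faces are flat), then use $|\vec{n}|=1$; you essentially say this already.

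For the first bullet the routes differ. The paper does not reprove the inequality; it simply cites the abstract discrete Poincar\'e framework of Brenner (Example~5.4 in \cite{MR2106270}), verifying that $\xi(u)=\|u\|_0$ satisfies the hypotheses (1.2), (1.3), (2.15), (3.3) there. Your enrichment-operator argument is the standard mechanism underlying that cited result, so you are effectively unpacking the proof that the paper outsources. What the paper's route buys is brevity and the avoidance of constructing $E_h$; what your route buys is self-containment and explicit control over the constants. Both are entirely valid, and the enrichment argument you sketch is precisely how such broken Poincar\'e inequalities are typically established in the C0IP literature.
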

\begin{proof}
	From \cite[Example 5.4]{MR2106270}, we have
	\begin{eqnarray}
		\|u_h\|_1^2\leq C\left(\sum_{\tau\in \mathcal{T}_h}|u_h|^2_{2,\tau}+\frac{1}{h}\sum_{e\in\epsilon_h\setminus\partial\Omega}\int_{e}\left[\!\!\left[\frac{\partial u_h}{\partial n}\right]\!\!\right]^2+[\xi(u_h)]^2\right),
	\end{eqnarray}
	where $\xi$ is a seminorm that satisfies Equations (1.2), (1.3), (2.15), and (3.3) in  \cite{MR2106270}.
	As $\xi(u)=\|u\|_0$ satisfies these properties, Inequality \eqref{355} holds. While (1.2) and (1.3) can be trivially proved, (2.15) is shown in~\cite[Corollary 2.2]{MR2106270}, and (3.3) follows directly from~\cite[Inequality (3.2)]{MR2106270}.
	To prove Inequality \eqref{36}, we first use \cite[Inequality (3.20)]{MR3051409} to bound the term containing $\vec{n}\cdot\left(\nabla\nabla \phi_h\right)\cdot\vec{n}$.  To bound the remaining term, we
	apply standard inverse trace inequalities to get
	\begin{eqnarray}
		\sum_{e\in\epsilon_h}\int_{e}\left\{\!\!\!\left\{\vec{n}\cdot\bigg(q^2\boldsymbol{T}\phi_h\bigg)\cdot \vec{n}\right\}\!\!\!\right\}^2\leq  \frac{Cq^4}{h}\|\phi_h\|^2_0.\label{58}
	\end{eqnarray}
	Adding this to the right-hand side of \cite[Inequality (3.20)]{MR3051409} completes the proof.
\end{proof}
These inequalities are enough to establish coercivity (and, thus, well-posedness) of the discrete problem in \eqref{C0IP}.
\begin{theorem}\label{Theroem7}
	Let $\{\mathcal{T}_h\}$ be a family of quasiuniform meshes of $\Omega$, $f\in L^2(\Omega)$, and $\boldsymbol{T}:\boldsymbol{T}\leq \mu_1$ pointwise on $\bar{\Omega}$.  Then, Problem \eqref{C0IP} is well-posed.
\end{theorem}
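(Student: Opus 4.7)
The plan is to apply the Lax--Milgram theorem in the discrete space $CG_k^{\Gamma_{0}}(\Omega,\tau_h)$ equipped with the norm $\tnorm{\cdot}{h}$. Continuity of the linear form $\phi_h \mapsto \int_\Omega f\phi_h$ is immediate from Cauchy--Schwarz, since $\|\phi_h\|_0 \leq \tnorm{\phi_h}{h}$. Hence the work reduces to establishing continuity and coercivity of $\tilde{a}_h$.

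The key structural observation is that $\tilde{a}_h$ is a nonsymmetric (antisymmetric-penalty) interior-penalty form: the two consistency terms involve $-B\sum_e\int_e\left\{\!\!\!\left\{\vec{n}\cdot(\nabla\nabla u_h+q^2\boldsymbol{T}u_h)\cdot\vec{n}\right\}\!\!\!\right\}\left[\!\!\left[\partial \phi_h/\partial n\right]\!\!\right]$ and $+B\sum_e\int_e\left\{\!\!\!\left\{\vec{n}\cdot(\nabla\nabla\phi_h+q^2\boldsymbol{T}\phi_h)\cdot\vec{n}\right\}\!\!\!\right\}\left[\!\!\left[\partial u_h/\partial n\right]\!\!\right]$, which cancel exactly when $u_h = \phi_h$. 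Thus
\begin{equation*}
    \tilde{a}_h(u_h,u_h) = \hat{a}(u_h,u_h) + \frac{1}{q^3 h}\sum_{e\in \epsilon_h\setminus\Gamma_{2}}\int_{e}\left[\!\!\left[\tfrac{\partial u_h}{\partial n}\right]\!\!\right]^2,
\end{equation*}
and the interior-penalty term directly supplies the corresponding term in $\tnorm{\cdot}{h}^2$. For $\hat{a}$, I would repeat the algebraic manipulation used in the coercivity proof of Theorem~\ref{thm3}, but applied element-by-element (since the only second-order term acts locally on each $\tau$): picking $C_1 = Bq^4\mu_1/(Bq^4\mu_1+C_2)$ with $0<C_2<m$ yields an $\mathcal{O}(1)$ constant $C_3$ such that $\hat{a}(u_h,u_h) \geq C_3\left(q^{-4}\sum_\tau |u_h|_{2,\tau}^2 + \|u_h\|_0^2\right)$.

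To recover the $q^{-4}\|\nabla u_h\|_0^2$ term in $\tnorm{u_h}{h}^2$, I would apply the discrete $H^2$-Poincar\'e inequality \eqref{355} from Lemma~\ref{Lemma3}, exactly as the continuum Poincar\'e inequality was used in Theorem~\ref{thm3}. A portion of $\hat{a}(u_h,u_h)$ together with a portion of the jump penalty (scaled by $q^{-4}$, using $q\geq 1$) bounds $C q^{-4}\|\nabla u_h\|_0^2$ from below after appropriate splitting. Finally, the average-weighted boundary term in $\tnorm{u_h}{h}^2$ must be absorbed rather than recovered: by Lemma~\ref{Lemma3} inequality \eqref{eq55},
\begin{equation*}
    \frac{h}{q^5}\sum_{e\in\epsilon_h\setminus\Gamma_{2}}\int_e \left\{\!\!\!\left\{\vec{n}\cdot(\nabla\nabla u_h+q^2\boldsymbol{T}u_h)\cdot\vec{n}\right\}\!\!\!\right\}^2 \leq \frac{C_2}{q^5}\left(\sum_\tau|u_h|_{2,\tau}^2 + q^4\|u_h\|_0^2\right),
\end{equation*}
which, using $q\geq 1$, is controlled by a constant multiple of $q^{-4}\sum_\tau|u_h|_{2,\tau}^2 + \|u_h\|_0^2$. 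Thus this term is equivalent to (in fact dominated by) the already-established lower bound, so the full $\tnorm{u_h}{h}^2$ is indeed controlled by $\tilde{a}_h(u_h,u_h)$ up to an $\mathcal{O}(1)$ constant.

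For continuity, I would apply Cauchy--Schwarz termwise to each component of $\tilde{a}_h$, distributing $q$-weights between $u_h$ and $\phi_h$ so that each factor matches one of the $q$-weighted ingredients of $\tnorm{\cdot}{h}$; the jump--average products pair the $\frac{h}{q^5}$-weighted average with the $\frac{1}{q^3 h}$-weighted jump. As in Theorem~\ref{Thm4}, this yields an $\mathcal{O}(Bq^4)$ continuity constant, which, combined with $\mathcal{O}(1)$ coercivity, completes the Lax--Milgram hypotheses. The main obstacle is the bookkeeping: verifying that the $q$-powers in the norm weights \eqref{C0-norm} are precisely those needed to absorb \eqref{eq55} into the coercivity estimate while still allowing termwise continuity to close with only a $Bq^4$ factor. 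Once these weights are checked to line up (as above), well-posedness follows directly.
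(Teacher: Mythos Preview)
Your proposal is correct and follows essentially the same approach as the paper: the antisymmetric consistency terms cancel on the diagonal, the element-wise version of the Theorem~\ref{thm3} argument gives $\hat{a}(u_h,u_h)\geq C_3\big(q^{-4}\sum_\tau|u_h|_{2,\tau}^2+\|u_h\|_0^2\big)$, the discrete Poincar\'e inequality~\eqref{355} recovers the $q^{-4}\|\nabla u_h\|_0^2$ term, inequality~\eqref{eq55} absorbs the average-weighted edge term, and termwise Cauchy--Schwarz gives $\mathcal{O}(Bq^4)$ continuity. The paper makes the ``appropriate splitting'' you allude to explicit by dividing the $C_3$-bound into thirds, but this is purely a presentational difference.
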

\begin{proof}
	The bilinear form $\tilde{a}$ defined in \eqref{C0IP} is continuous and coercive in the norm defined in~\eqref{C0-norm}. Proving continuity is straightforward using the Cauchy--Schwarz inequality, yielding a continuity constant that is $\mathcal{O}(Bq^4)$, as in the conforming case.
	Coercivity of $\tilde{a}$ can be proven by combining the inequalities of Lemma \ref{Lemma3} and Cauchy--Schwarz inequality. By direct substitution, we have    
	\begin{align*}
		\tilde{a}(u_h,u_h)&=B\sum_{\tau \in\mathcal{T}_h}\left(\int_{\tau}|u_h|_{2,\tau}^2+2q^2\int_{\tau}\nabla\nabla u_h:\boldsymbol{T}u_h\right)+\int_{\Omega}(Bq^4\boldsymbol{T}:\boldsymbol{T}+m)u_h^2\\
		&+\frac{1}{q^3h}\sum_{e\in\epsilon_h\setminus\Gamma_{2}}\int_{e}\left[\!\!\left[\frac{\partial u_h}{\partial n}\right]\!\!\right]^2.\notag
	\end{align*}        
	As in the proof of Theorem~\ref{thm3}, we can show that there exists $C_3>0$ such that
	\begin{align*}
		B\sum_{\tau \in\mathcal{T}_h}\left(\int_{\tau}|u_h|_{2,\tau}^2+2q^2\int_{\tau}\nabla\nabla u_h:\boldsymbol{T}u_h\right)&+\int_{\Omega}(Bq^4\boldsymbol{T}:\boldsymbol{T}+m)u_h^2\\
		&\geq C_3\left(q^{-4}\sum_{\tau\in \mathcal{T}_h}|u_h|_{2,\tau}^2+\|u_h\|_0^2 \right).
	\end{align*}
	With this, we have
	\begin{align*}
		\tilde{a}(u_h,u_h) &\geq C_3\left(q^{-4}\sum_{\tau\in \mathcal{T}_h}|u_h|_{2,\tau}^2+\|u_h\|_0^2 \right)+\frac{1}{q^3h}\sum_{e\in\epsilon_h\setminus\Gamma_{2}}\int_{e}\left[\!\!\left[\frac{\partial u_h}{\partial n}\right]\!\!\right]^2\\
		&\geq \frac{C_3}{3}\left(q^{-4}\sum_{\tau\in \mathcal{T}_h}|u_h|_{2,\tau}^2+\|u_h\|_0^2\right)+\frac{C_3h}{3C_2q^4}\sum_{e\in\epsilon_h\setminus \Gamma_{2}}\int_{e}\left\{\!\!\!\left\{\vec{n}\cdot\bigg(\nabla\nabla
		\phi_h+q^2\boldsymbol{T}\phi_h\bigg)\cdot
		\vec{n}\right\}\!\!\!\right\}^2 \\
		&+\frac{\min\{C_3,1\}}{3}\left(q^{-4}\sum_{\tau\in \mathcal{T}_h}|u_h|_{2,\tau}^2+\|u_h\|_0^2+\frac{1}{q^3h}\sum_{e\in\epsilon_h\setminus\Gamma_{2}}\int_{e}\left[\!\!\left[\frac{\partial u_h}{\partial n}\right]\!\!\right]^2\right)\\
		&+\frac{2}{3q^3h}\sum_{e\in\epsilon_h\setminus\Gamma_{2}}\int_{e}\left[\!\!\left[\frac{\partial u_h}{\partial n}\right]\!\!\right]^2\\
		&\geq \frac{\min\{C_3,1\}}{3}\left(q^{-4}\sum_{\tau\in \mathcal{T}_h}|u_h|_{2,\tau}^2+\frac{q^{-4}}{C_1}\|\nabla u_h\|_0^2+\|u_h\|_0^2\right)\\&+\frac{C_3h}{3C_2q^5}\sum_{e\in\epsilon_h\setminus \Gamma_{2}}\int_{e}\left\{\!\!\!\left\{\vec{n}\cdot\bigg(\nabla\nabla
		\phi_h+q^2\boldsymbol{T}\phi_h\bigg)\cdot
		\vec{n}\right\}\!\!\!\right\}^2
		+\frac{2}{3q^3h}\sum_{e\in\epsilon_h\setminus\Gamma_{2}}\int_{e}\left[\!\!\left[\frac{\partial u_h}{\partial n}\right]\!\!\right]^2,
	\end{align*}
	where $C_1$ and $C_2$ are defined in Lemma \eqref{Lemma3}. Thus, the coercivity constant is $\mathcal{O}(1)$.
\end{proof}
\begin{remark}
	Note that the bilinear form $\tilde{a}$ is also continuous with respect to the mesh-dependent norm for functions in $H^t(\Omega)\cap H^1_{\Gamma_{0}}(\Omega), \ t\geq 4$, where the jump terms vanish over interior edges. That is, there exists a positive constant, $C$, such that 
	\begin{equation}
		\tilde{a}(u,\phi)\leq CBq^{4} \tnorm{u}{h}\tnorm{\phi}{h}, \ \forall u,\phi\in H^t(\Omega)\cap H^1_{\Gamma_{0}}(\Omega).
	\end{equation} 
\end{remark}
\begin{lemma}\label{Lemma-c0ip}
	Let $\{\mathcal{T}_h\}$ be a family of quasiuniform meshes of $\Omega$, $v\in H^t(\Omega)\cap H^1_{\Gamma_{0}}(\Omega), \ t\geq 4$, $\boldsymbol{T}:\boldsymbol{T}\leq \mu_1$ pointwise on $\bar{\Omega}$, and $v_h\in CG^{\Gamma_{0}}_k(\Omega,\mathcal{T}_h)$.  Then,
	\begin{align}\label{65}
		\tnorm{v-v_h}{h}^2&\leq \frac{C}{h^4}\left(\|v-v_h\|_0^2+\frac{h^2}{q^2}| v-v_h|_1^2+\frac{h^4}{q^4}\sum_{\tau\in \mathcal{T}_h}|v-v_h|_{2,\tau}+\frac{h^6}{q^6}\sum_{\tau\in \mathcal{T}_h}|v-v_h|_{3,\tau}^2\right.\notag\\&\phantom{xxxxxxxxxxxxxxxxxxxxxxxxxxxxxxxxx}\left.+\frac{h^8}{q^8}\sum_{\tau\in\mathcal{T}_h}|v-v_h|_{4,\tau}^2\right)
	\end{align}
\end{lemma}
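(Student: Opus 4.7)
The plan is to bound each of the five terms appearing in $\tnorm{v-v_h}{h}^2$ separately. Writing $r = v - v_h$, note that $r \in H^2(\Omega)$ only if $v_h \in H^2$, which is not the case here; however, on each element $\tau\in\tau_h$ we have $r|_\tau \in H^t(\tau)$ since $v_h|_\tau$ is a polynomial and $v\in H^t(\Omega)$. This elementwise regularity is all that will be needed in order to apply Corollary~\ref{Trace-cor} to the boundary integrals.

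The three volume contributions in $\tnorm{r}{h}^2$, namely $q^{-4}\sum_\tau|r|_{2,\tau}^2$, $q^{-4}\|\nabla r\|_0^2$, and $\|r\|_0^2$, transfer directly into the right-hand side of \eqref{65}. The first matches $\frac{C}{h^4}\cdot \frac{h^4}{q^4}|r|_2^2$ exactly; the second satisfies $q^{-4}\|\nabla r\|_0^2 \leq \frac{C}{h^2 q^2}|r|_1^2$ since $h\leq 1 \leq q$ (so $h^2\leq q^2$ and hence $q^{-4}\leq h^{-2}q^{-2}$); the third is trivially bounded by $\frac{C}{h^4}\|r\|_0^2$ for $h\leq 1$.

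The main work is in controlling the two edge contributions. For the normal-jump term, I first use $\|[\partial r/\partial n]\|_{0,e}^2 \leq 2\bigl(\|\partial r/\partial n|_{\tau_1}\|_{0,e}^2 + \|\partial r/\partial n|_{\tau_2}\|_{0,e}^2\bigr)$ to pass from the jump over an interior edge to one-sided traces, and bound $|\partial r/\partial n|\leq |\nabla r|$. Summing over $e\in\epsilon_h\setminus\Gamma_2$ and regrouping by elements gives $\sum_{e}\|[\partial r/\partial n]\|_{0,e}^2 \leq C\sum_\tau \|\nabla r\|_{0,\partial\tau}^2$. Applying Corollary~\ref{Trace-cor} componentwise to $\nabla r|_\tau \in H^1(\tau)$ yields $\|\nabla r\|_{0,\partial\tau}^2 \leq C\bigl(\tfrac{h}{q}|r|_{2,\tau}^2 + \tfrac{q}{h}|r|_{1,\tau}^2\bigr)$, and multiplying by $\frac{1}{q^3 h}$ produces the controlled contributions $\tfrac{C}{q^4}|r|_2^2$ and $\tfrac{C}{h^2q^2}|r|_1^2$. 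For the Hessian-average term, the pointwise bound $|\boldsymbol{T}|\leq \mu_1^{1/2}$, Cauchy--Schwarz, and the analogous averaging estimate reduce matters to bounding $\|\nabla\nabla r\|_{0,\partial\tau}^2$ and $\|r\|_{0,\partial\tau}^2$. Two more applications of Corollary~\ref{Trace-cor} give $\|\nabla\nabla r\|_{0,\partial\tau}^2\leq C(\tfrac{h}{q}|r|_{3,\tau}^2+\tfrac{q}{h}|r|_{2,\tau}^2)$ and $\|r\|_{0,\partial\tau}^2\leq C(\tfrac{h}{q}|r|_{1,\tau}^2+\tfrac{q}{h}\|r\|_{0,\tau}^2)$. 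Weighting by $\tfrac{h}{q^5}$ and $\tfrac{hq^4\mu_1}{q^5}=\tfrac{h\mu_1}{q}$ respectively yields the four controlled contributions $\tfrac{Ch^2}{q^6}|r|_3^2$, $\tfrac{C}{q^4}|r|_2^2$, $\tfrac{Ch^2}{q^2}|r|_1^2$, and $C\|r\|_0^2$, each dominated by the corresponding term in the right-hand side of~\eqref{65} once the $\tfrac{1}{h^4}$ prefactor is extracted.

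The main obstacle is not any hidden difficulty but the careful bookkeeping of the powers of $h$ and $q$ through two rounds of trace estimates. A minor observation is that the $\tfrac{h^8}{q^8}\sum_\tau |v-v_h|_{4,\tau}^2$ term in the right-hand side of~\eqref{65} is never actually consumed by the above estimates, which only require $|r|$-seminorms up to order three; it is retained in the statement purely as a convenience, since it appears harmlessly on the right when the lemma is later applied to Lagrange interpolation errors of order $k\geq 3$, where the standard interpolation estimates naturally involve $|v|_{t,\tau}$ up to $t = 5$.
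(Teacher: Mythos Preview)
Your proof is correct and follows essentially the same approach as the paper: set $r=v-v_h$, note its elementwise $H^4$ regularity, treat the three volume terms directly, and bound the two edge contributions by passing from jump/average to one-sided traces and applying the $h/q$-weighted trace inequality of Corollary~\ref{Trace-cor} elementwise. Your observation that the $\tfrac{h^8}{q^8}\sum_\tau|v-v_h|_{4,\tau}^2$ term is never actually consumed by the estimates (only seminorms up to order three appear) is correct and is a point the paper does not make explicit.
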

\begin{proof}
	First note that $\left[\!\!\left[\frac{\partial v}{\partial n}\right]\!\!\right]=0$ on the interior edges of $\mathcal{T}_h$.
	For $v\in H^t(\Omega)\cap H^1_{\Gamma_{0}}(\Omega), \ t\geq 4$, we have that $ r=v-v_h\in H^4(\tau),\ \forall \tau\in\mathcal{T}_h$.  We apply Corollary~\ref{Trace-cor} to the boundary integrals in  \eqref{C0-norm}, yielding		
	\begin{equation}
		\frac{1}{q^3h}\sum_{e\in\epsilon_h\setminus\Gamma_{2}}\int_{e}\left[\!\!\left[\frac{\partial r}{\partial n}\right]\!\!\right]^2\leq C\sum_{\tau\in\mathcal{T}_h}\left(\frac{1}{q^2h^2}\|\nabla r\|_{0,\tau}^2+\frac{1}{q^4}|r|^2_{2,\tau}\right)\label{66}
	\end{equation}
	and
	\begin{align}
		\frac{h}{q^5}\sum_{e\in\epsilon_h\setminus\Gamma_{2}}\int_{e}\left\{\!\!\!\left\{\vec{n}\cdot\bigg(\nabla\nabla r+q^2\boldsymbol{T}r\bigg)\cdot \vec{n}\right\}\!\!\!\right\}^2 &\leq C\left(\sum_{\tau\in\mathcal{T}_h}\left(\frac{1}{q^4}|r|_{2,\tau}^2+\frac{h^2}{q^6}|r|_{3,\tau}\right)\right.\notag\\
		&\phantom{xxxxxxxxx}\left.+\|r\|_0^2+\frac{h^2}{q^2}|\nabla r|_1^2\right). \label{eq:fixme}  
	\end{align} 
	As a result, Inequality \eqref{65} holds.
\end{proof}
\begin{remark}[Galerkin orthogonality]\label{galerkin-C0IP}
	Let $u\in H^{t}(\Omega),\ t\geq 4$ be the solution of \eqref{12}. If $u_h\in CG^{\Gamma_{0}}_k(\Omega,\mathcal{T}_h)$ is the solution of \eqref{32}, then $A_h(u-u_h, \phi_h)=0$, $\forall \phi_h\in CG_{k}^{\Gamma_{0}}(\Omega,\mathcal{T}_h)$.
\end{remark}
\begin{theorem}\label{Thm7}
	Assume that the solution of \eqref{12} satisfies $u\in H^t(\Omega)$, for $t\geq 4$. If $u_h\in  {CG}_{k}^{\Gamma_{0}}(\Omega,\mathcal{T}_h)$ is the solution of \eqref{C0IP}, then
	\begin{eqnarray*}
		\tnorm{u-u_h}{h}\leq CBq^{4} h^{\min\{t,k+1\}-2}|u|_t.
	\end{eqnarray*} 	
\end{theorem}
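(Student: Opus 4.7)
The plan is to follow the same template as the proof of Theorem \ref{thm} for the Argyris discretization, using coercivity, continuity, and Galerkin orthogonality to reduce the problem to a best-approximation estimate, which is then bounded via Lemma \ref{Lemma-c0ip} together with standard interpolation results for continuous Lagrange elements of degree $k$.

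First, for any $v_h\in CG_k^{\Gamma_0}(\Omega,\tau_h)$, the triangle inequality gives
$\tnorm{u-u_h}{h}\leq \tnorm{u-v_h}{h}+\tnorm{u_h-v_h}{h}$.
Since $u_h-v_h\in CG_k^{\Gamma_0}(\Omega,\tau_h)$, the coercivity bound of Theorem \ref{Theroem7}, followed by Galerkin orthogonality (Remark \ref{galerkin-C0IP}) and the continuity bound of the bilinear form $\tilde{a}$ on $H^t(\Omega)\cap H^1_{\Gamma_0}(\Omega)$ stated just after Theorem \ref{Theroem7}, yield
\begin{equation*}
\tnorm{u_h-v_h}{h}^2\leq C\,\tilde{a}(u_h-v_h,u_h-v_h)=C\,\tilde{a}(u-v_h,u_h-v_h)\leq CBq^4\tnorm{u-v_h}{h}\tnorm{u_h-v_h}{h}.
\end{equation*}
Dividing through gives $\tnorm{u_h-v_h}{h}\leq CBq^4\tnorm{u-v_h}{h}$, so that (absorbing the lower-order contribution from $\tnorm{u-v_h}{h}$ into the $Bq^4$-scaled term, which is valid under the standing assumption $Bq^4\geq s>0$)
\begin{equation*}
\tnorm{u-u_h}{h}\leq CBq^4\inf_{v_h\in CG_k^{\Gamma_0}(\Omega,\tau_h)}\tnorm{u-v_h}{h}.
\end{equation*}

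Next, I invoke Lemma \ref{Lemma-c0ip} to bound $\tnorm{u-v_h}{h}$ in terms of broken Sobolev seminorms of $u-v_h$ of orders $0$ through $4$, with appropriate weights of $h$ and $q$. Choosing $v_h$ to be the standard Lagrange interpolant of $u$ into $CG_k^{\Gamma_0}(\Omega,\tau_h)$, the usual interpolation estimates for $CG_k$ give $|u-v_h|_{j,\tau}\leq C h^{\min\{t,k+1\}-j}|u|_{t,\tau}$ for $0\leq j\leq 4$ (and globally when $j\leq 1$, since $v_h$ is $H^1$-conforming). Inserting these into \eqref{65} of Lemma \ref{Lemma-c0ip}, the prefactor $h^{-4}$ from the lemma precisely balances the two powers of $h$ gained from the interpolation in the highest-order term to produce the leading behaviour $h^{2(\min\{t,k+1\}-2)}$; the lower-order terms carry higher powers of $h$ and are therefore subdominant. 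Taking square roots and multiplying by the $CBq^4$ quasi-optimality factor yields the claimed bound
\begin{equation*}
\tnorm{u-u_h}{h}\leq CBq^4 h^{\min\{t,k+1\}-2}|u|_t.
\end{equation*}

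The only real subtlety, and what I expect to be the main obstacle in writing this carefully, is bookkeeping the weights of $h$ and $q$ so that all five terms on the right-hand side of Lemma \ref{Lemma-c0ip} reduce to the same leading order $h^{\min\{t,k+1\}-2}$ after interpolation, while not introducing any positive power of $q$ beyond those already absorbed into the $Bq^4$ quasi-optimality constant. Since the weights in the lemma were chosen precisely to match those appearing in the definition of $\tnorm{\cdot}{h}$ and since each additional derivative on $u-v_h$ costs one power of $h$ from interpolation and saves one power of $h/q$ in the weighting, the bookkeeping is routine, paralleling that carried out in detail in the proof of Theorem \ref{thm}.
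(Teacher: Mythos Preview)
Your proposal is correct and follows essentially the same approach as the paper's own proof: establish quasi-optimality via coercivity (Theorem~\ref{Theroem7}), Galerkin orthogonality (Remark~\ref{galerkin-C0IP}), and continuity of $\tilde{a}$, then apply Lemma~\ref{Lemma-c0ip} together with standard Lagrange interpolation estimates to obtain the stated rate. The paper carries out the final interpolation bookkeeping slightly more explicitly, but the argument is identical in structure and content.
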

\begin{proof}
	We use the orthogonality property to prove the error estimates as in Theorem \ref{thm}. Given $u\in H^t(\Omega), \ t\geq 4$, then we have the standard quasi-optimality result,
	\begin{eqnarray}
		\tnorm{u-u_h}{h}\leq CB q^{4}\inf_{v_h\in CG_k^{\Gamma_{0}}(\Omega,\mathcal{T}_h)}\tnorm{u-v_h}{h}.
	\end{eqnarray}  
	Now, we use Lemma \ref{Lemma-c0ip} and the standard Lagrange interpolation error estimates \cite{MR0520174,MR2373954} to yield
	\begin{align*}
		\tnorm{u-u_h}{h}&\leq \frac{CBq^{4}}{h^2}\left(\|u-v_h\|_0+\frac{h}{q}\|\nabla (u-v_h)\|_0+\frac{h^2}{q^2}\sum_{\tau\in \mathcal{T}_h}|u-v_h|_{2,\tau}\right.\\
		&\phantom{xxxxxxxxxxxxxxxxxxxxxx}\left.+\frac{h^3}{q^3}\sum_{\tau\in \mathcal{T}_h}|u-v_h|_{3,\tau}+\frac{h^4}{q^4}\sum_{\tau\in \mathcal{T}_h}|u-v_h|_{4,\tau}\right)\\
		&\leq \frac{CBq^{4}}{h^2}h^{\min\{t,k+1\}}|u|_t =CBq^4h^{\min\{t,k+1\}-2}|u|_t.
	\end{align*}
\end{proof}

\subsection{Mixed finite elements}\label{ssec:mixed}
We now consider a mixed finite-element discretization of the systems reformulation given in~\eqref{keyy1}-\eqref{keyy2}. We consider a conforming discretization, with $u_h\in DG_k(\Omega,\mathcal{T}_h) \subset L^2(\Omega)$ and $\vec{\alpha}_h\in RT^{\Gamma_{3}}_{k+1}(\Omega,\mathcal{T}_h) \subset H_{\Gamma_{3}}(\mathrm{div};\Omega)$.  In order to prove the required inf-sup condition on $b(\vec{\alpha}_h,(u_h,\vec{v}_h))$, the choice of space for $\vec{v}_h$ is based on generalized Taylor--Hood elements, writing  $\vec{v}_h\in V_h$, where $V_h=\left\{\vec{\psi}_h\ \middle| \ \ \vec{\psi}_h\in [CG_{k+2}(\Omega,\mathcal{T}_h)]^2\cap V \right\}$, with (as before) $V=\left\{\vec{v}\in[H^{1}_{\Gamma_{1}}(\Omega)]^2\middle|\ \vec{v}\times\vec{n}=0\ \text{on}\ \Gamma_{0,2}  \right\}$.  As in the proof of Theorem~\ref{thm:continuum_mixed}, the Helmholtz decomposition of $\vec{\alpha}_h \in RT^{\Gamma_{3}}_{k+1}(\Omega,\mathcal{T}_h)$ will be used to establish the inf-sup condition, now in its discrete form, with  $p_h \in CG^{\Gamma_{3}}_{k+1}(\Omega,\mathcal{T}_h)$, but only for domains $\Omega\subset\mathbb{R}^2$.   

\begin{theorem}\label{Thm3}
	Let the assumptions of Theorem~\ref{thm:continuum_mixed} be satisfied, and let $\mathcal{T}_h$ be a quasiuniform family of triangular meshes of $\Omega$. Let the bilinear forms $\mathcal{A}$ and $b$ and linear form $F$ be defined as in \eqref{keyr}-\eqref{RHS_func}. For sufficiently small $h$, the discrete saddle-point problem of finding $(u_h,\vec{v}_h,\vec{\alpha}_h) \in  DG_k(\Omega,\mathcal{T}_h)\times V_h\times RT^{\Gamma_{3}}_{k+1}(\Omega,\mathcal{T}_h)$ such that
	\begin{eqnarray}
		\mathcal{A}\big((u_h,\vec{v}_h), (\phi_h,\vec{\psi}_h))+b(\vec{\alpha}_h,(\phi_h,\vec{\psi}_h))&=&F(\phi_h), \ \ \forall (\phi_h, \vec{\psi}_h)\in\notag DG_k(\Omega,\mathcal{T}_h)\times V_h\\
		b\big(\vec{\beta}_h, (u_h,\vec{v}_h)\big)&=&0, \ \ \forall \vec{\beta}\in RT^{\Gamma_{3}}_{k+1}(\Omega,\mathcal{T}_h)\label{keyu}
	\end{eqnarray}
	is well-posed for $k\geq 1$.
\end{theorem}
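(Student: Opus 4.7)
The plan is to verify the four hypotheses of Brezzi's theory on the discrete spaces, paralleling the continuum argument of Theorem~\ref{thm:continuum_mixed}: continuity of $\mathcal{A}$, continuity of $b$, coercivity of $\mathcal{A}$ on the discrete kernel of $b$, and a discrete inf-sup condition. Continuity of $F$ is immediate from $f \in L^2(\Omega)$. Because $DG_k(\Omega,\tau_h) \subset L^2(\Omega)$, $V_h \subset V$, and $RT^{\Gamma_{3}}_{k+1}(\Omega,\tau_h) \subset H_{\Gamma_{3}}(\mathrm{div};\Omega)$ form a conforming subfamily, the continuity bounds on $\mathcal{A}$ (with constant $\mathcal{O}(Bq^4)$) and on $b$ (with constant $\mathcal{O}(q^4)$) transfer verbatim when measured in the product norm \eqref{eq:wtd_product_norm} and the $H_{\mathrm{Div}}$ norm induced by \eqref{eq:Div_norm}, where $\|\vec{\alpha}_h\|_{\mathrm{Div}}$ is still defined via the continuum Helmholtz decomposition of Lemma~\ref{lemma-helm-cont}.

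For coercivity of $\mathcal{A}$ on the discrete kernel, the pointwise integrand estimate \eqref{eq29} holds for all $(u_h,\vec{v}_h)$ without any kernel assumption, so the real work is to recover a Poincar\'e-type bound controlling $\|\vec{v}_h\|_0$ by $\|u_h\|_0$ and $\|\nabla\vec{v}_h\|_0$. The natural strategy is to mimic the continuum test choice $[S,0]^\top$, $[0,S]^\top$ by constructing discrete analogues $\vec{\beta}_{h,i} \in RT^{\Gamma_{3}}_{k+1}(\Omega,\tau_h)$, for instance via Raviart--Thomas interpolation of $[S_h,0]^\top$ and $[0,S_h]^\top$ where $S_h$ is the discrete function of Lemma~\ref{Lemma-coercivity-poinc}. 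The kernel conditions $b(\vec{\beta}_{h,i},(u_h,\vec{v}_h)) = 0$, combined with the positive-mean property of $S_h$ and Lemma~\ref{Lemma-poinc}, should reproduce the bound \eqref{eq32} in the discrete setting once interpolation-error terms scaled by positive powers of $h$ are absorbed into the left-hand side; this is precisely where the hypothesis of sufficiently small $h$ enters.

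The principal obstacle is the discrete inf-sup condition. My plan is to apply the discrete Helmholtz decomposition of Lemma~\ref{lemma,helm} to write $\vec{\alpha}_h = \nabla_h^{\Gamma_{0}} \eta_h + \nabla \times p_h$ with $\eta_h \in DG_k(\Omega,\tau_h)$ and $p_h \in CG^{\Gamma_{3}}_{k+1}(\Omega,\tau_h)$, then choose $u_h = C_1(\nabla\cdot\vec{\alpha}_h - \eta_h) \in DG_k(\Omega,\tau_h)$ in direct analogy with the continuum proof; the identity \eqref{grad_h} yields the same lower bound on $\int u_h \nabla\cdot\vec{\alpha}_h$ as in the continuum case. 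The crucial new ingredient for $\vec{v}_h$ is a Taylor--Hood-style inf-sup for the pair $[CG_{k+2}]^2 / CG_{k+1}$ on the boundary configuration $\Gamma_{0,1} \cup \Gamma_{3,2}$, producing $\vec{\psi}_h \in [CG_{k+2}(\Omega,\tau_h)]^2 \cap V$ with $\int p_h \nabla\cdot\vec{\psi}_h \geq \|p_h\|_0^2$ and $\|\vec{\psi}_h\|_1 \leq C\|p_h\|_0$, after which the rotation $\vec{v}_h = [\psi_{h,2},-\psi_{h,1}]^\top \in V_h$ gives $\nabla\times\vec{v}_h = \nabla\cdot\vec{\psi}_h$. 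The subtleties are verifying this Taylor--Hood inf-sup with the correct boundary conditions (including the extra tangential constraint $\vec{v}\times\vec{n}=0$ on $\Gamma_{0,2}$) and handling the exceptional case $\partial\Omega = \Gamma_0$, in which $p_h$ inherits a zero-mean constraint from the discrete Helmholtz decomposition; both mirror the casework used in the continuum proof and exclude $\partial\Omega = \Gamma_{3,1}$ for the same reason.
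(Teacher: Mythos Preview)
Your overall strategy is the same as the paper's, and the inf-sup argument (discrete Helmholtz decomposition, choice $u_h = C_1(\nabla\cdot\vec{\alpha}_h - \eta_h)$, Taylor--Hood inf-sup to produce $\vec{\psi}_h$, then rotation to $\vec{v}_h$) matches the paper almost exactly, modulo the typo $\nabla_h^{\Gamma_0}$ which should be $\nabla_h^{\Gamma_3}$ since $\vec{\alpha}_h \in RT_{k+1}^{\Gamma_3}$.

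There is one place where you make your life harder than necessary and, as a consequence, misidentify where the small-$h$ hypothesis is used. For coercivity on the discrete kernel, you propose Raviart--Thomas interpolation of $[S_h,0]^\top$ and $[0,S_h]^\top$ and then plan to absorb the resulting interpolation error for small $h$. But $S_h \in CG_1^{\partial\Omega}(\Omega,\tau_h)$ is piecewise linear with zero boundary trace, so $[S_h,0]^\top$ and $[0,S_h]^\top$ already lie in $RT_{k+1}^{\Gamma_3}(\Omega,\tau_h)$ for every $k\geq 1$ (they are piecewise $[P_1]^2$ with continuous, vanishing, normal component on $\partial\Omega$). The paper simply tests with these vectors directly, and the coercivity argument goes through with no $h$-dependence at all. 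The ``sufficiently small $h$'' hypothesis in the theorem statement is instead needed for the discrete Taylor--Hood inf-sup (the result cited from \cite{MR3660776} requires a mesh-fineness condition), not for coercivity.
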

\begin{proof}
	We follow the standard theory (see, e.g., \cite{MR3097958}), requiring continuity of $\mathcal{A}$ and $b$, coercivity of $\mathcal{A}$, and an inf-sup condition on $b$.  Because we consider a conforming discretization, continuity of both $\mathcal{A}$ and $b$ follow directly as in Theorem~\ref{thm:continuum_mixed}, with the same constants in the norms used there, where we use the discrete Helmholtz decomposition of $\vec{\alpha}_h=\nabla\times p_h+\nabla_h^{\Gamma_{3}}\phi_h$ defined in Lemma \ref{lemma,helm} to prove continuity of $b$.
	Similarly, we consider coercivity of the bilinear form $\mathcal{A}\left((u_h,\vec{v}_h), (\phi_h, \vec{\psi}_h)\right)$ on the set 
	\begin{equation*}
		\Lambda_h=\left\{(u_h,\vec{v}_h)\in DG_{k}(\Omega,\mathcal{T}_h)\times V_h \,\middle| \, b\left(\vec{\alpha}_h,(u_h,\vec{v}_h)\right)=0, \ \forall \vec{\alpha}_h\in RT_{k+1}^{\Gamma_{3}}(\Omega,\mathcal{T}_h)\right\}.
	\end{equation*}
	This follows again as in the continuum case, using 
	$\vec{\beta}_1=\begin{bmatrix}
	S_h\\0
	\end{bmatrix}$ and $\vec{\beta}_2=\begin{bmatrix}
	0\\S_h
	\end{bmatrix}$, with $S_h\in CG_1^{\partial\Omega}(\Omega,\mathcal{T}_h)$ as defined in Lemma~\ref{Lemma-coercivity-poinc} in place of the continuum analogues in Theorem~\ref{thm:continuum_mixed}.

	Finally, we establish the discrete inf-sup condition, that
	\begin{eqnarray}\label{key8}
		I=\sup_{(u_h,\vec{v}_h)\in DG_{k}(\Omega,\mathcal{T}_h)\times V_h}\frac{\int_{\Omega}\vec{\alpha_h}\cdot\vec{v}_h+\int_{\Omega}u_h\nabla\cdot\vec{\alpha}_h}{\|(u_h,\vec{v}_h)\|_{0,q,1}}\geq Cq^2\|\vec{\alpha}_h\|_{\mathrm{Div}},
	\end{eqnarray}
	for some constant, $C$.
	By Lemma~\ref{lemma,helm}, for any $\vec{\alpha}_h \in RT_{k+1}^{\Gamma_{3}}(\Omega,\mathcal{T}_h)$, there exists $p_h\in CG^{\Gamma_{3}}_{k+1}(\Omega,\mathcal{T}_h)$ and $\eta_h\in DG_{k}(\Omega,\mathcal{T}_h)$ such that 
	\begin{equation}\label{key7}
		\vec{\alpha}_h=\nabla\times p_h+\nabla^{\Gamma_{3}}_h \eta_h.
	\end{equation}
	This gives the equivalent form to~\eqref{key8} of
	\begin{align*}
		I=\sup_{(u_h,\vec{v}_h)\in DG_{k}\times V_h}\frac{\int_{\Omega}\left(\nabla^{\Gamma_{3}}_h\eta_h+\nabla\times p_h\right)\cdot\vec{v}_h+\int_{\Omega}u_h\nabla\cdot\vec{\alpha}_h}{\sqrt{\|u_h\|_0^2+q^{-4}\|\vec{v}_h\|_1^2
		}}\geq &Cq^2\|\vec{\alpha}_h \|_{\mathrm{Div}},\notag\\ 
		& \forall \vec{\alpha}_h\in RT_{k+1}^{\Gamma_{3}}(\Omega,\mathcal{T}_h).\label{infsup_discrete}
	\end{align*}
	We show this by choosing $u_h=C_1\left(\nabla\cdot\vec{\alpha}_h-\eta_h\right)$. Let $h$ be sufficiently small so that the inf-sup condition of \cite[Lemma 3.5]{MR3660776} holds.  Then, for all $p_h\in CG^{\Gamma_{3}}_{k+1}(\Omega,\mathcal{T}_h)$, there exists a vector $\vec{\psi}_h\in V_h$ such that $\int_{\Omega}p_h\nabla\cdot\vec{\psi}_h\geq\|p_h\|_0^2$, and $\|\vec{\psi}_h\|^2_1\leq C_2\|\vec{p}_h\|^2_0$. To establish the inf-sup condition needed here, we choose $\vec{v}_h =[\psi_{2,h}, -\psi_{1,h}]^T$ which also belongs to $V_h$, giving $\nabla \cdot\vec{\psi}_h =\nabla\times\vec{v}_h $ and $\|\vec{\psi}_h \|_1^2=\|\vec{v}_h\|_1^2$.  The remainder of the proof follows identically as in the continuum case.
\end{proof} 
\begin{remark}\label{rem:3d_mixed}

		In the three-dimensional case, the Helmholtz decomposition of $\vec{\alpha}_h = \nabla_h \phi_h +\nabla\times \vec{p}_h $, gives $\phi_h\in DG_{k}(\Omega,\mathcal{T}_h)$ and $\vec{p}_h\in \mathrm{N}^1_{k+1}(\Omega,\mathcal{T}_h)$, where the $\mathrm{N}^1_{k+1}(\Omega,\mathcal{T}_h)$ is the N\'ed\'elec element of the first kind of order $k+1$~\cite{arnold2000multigrid}.  Thus, we can no longer leverage existing inf-sup results based on Taylor-Hood elements to extend Theorem~\ref{Thm3} to $\Omega\subset \mathbb{R}^3$.  While, numerically, it appears that enriching the space for $\vec{v}_h$, either adding bubble degrees of freedom or directly using a higher-order space, leads to a stable scheme, we leave analyzing this extension for future work.  
\end{remark}	
To measure the error estimates that arise from our three-field mixed formulation, we define the approximation errors,
\begin{eqnarray}
	E_{\left(u,\vec{v}\right)}&:=&\inf_{(\phi_h,\vec{\psi}_h)\in DG_{k}(\Omega,\mathcal{T}_h)\times V}\|(u,\vec{v})-(\phi_h,\vec{\psi}_h)\|_{0,q,1},\\
	E_{\vec{\alpha}}&:=&\inf_{\vec{\beta}_h\in RT_{k+1}(\Omega,\mathcal{T}_h)}\|\vec{\alpha}-\vec{\beta}_h\|_{\mathrm{Div}}.
\end{eqnarray}

\begin{corollary}\label{cor:error}
	Let the assumptions of Theorem \ref{Thm3} be satisfied. Assume that $u\in H^{k+5}$ and $\boldsymbol{T}\in \boldsymbol{C}^{k+2}(\Omega)$, for $k\geq1$, $(u,\vec{v},\vec{\alpha})$ is the unique solution of Problem \eqref{keyy1}-\eqref{keyy2}, and $(u_h,\vec{v}_h,\vec{\alpha}_h)$ is the solution of Problem \eqref{keyu}. Then,  
	\begin{eqnarray}
		\|(u,\vec{v})-(u_h,\vec{v}_h)\|_{0,q,1}& \leq&C_1\left(\left(Bq^4+B^{1/2}q^4\right)E_{(u,\vec{v})} +q^4 E_{\vec{\alpha}}\right),\label{79}
		\\
		\|\vec{\alpha}-\vec{\alpha}_h\|_{\textrm{\normalfont Div}} & \leq&C_2\left(\left(B^{3/2}q^4+Bq^4\right) E_{(u,\vec{v})}+B^{1/2}q^4E_{\vec{\alpha}}\right), \label{80}
	\end{eqnarray}
	where $C_1,C_2$ are positive constants independent of $h$, $B$ and $q$.
\end{corollary}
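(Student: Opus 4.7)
The plan is to apply the standard abstract error estimates for mixed finite-element discretizations of saddle-point problems (Brezzi's theory, as in Proposition 5.5.2 of Boffi--Brezzi--Fortin), feeding in the continuity, coercivity, and inf-sup constants identified in the proof of Theorem~\ref{Thm3}. Since the discretization is conforming, the continuity bounds for $\mathcal{A}$ and $b$ carry over from the continuum proof in Theorem~\ref{thm:continuum_mixed} with constants $M_{\mathcal{A}} = \mathcal{O}(Bq^{4})$ (in the $\|\cdot\|_{0,q,1}$ norm) and $M_b = \mathcal{O}(q^{4})$ (relating $\|\cdot\|_{0,q,1}$ and the $\mathrm{Div}$ norm). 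The coercivity of $\mathcal{A}$ on the discrete kernel $\Lambda_h$ holds with an $\mathcal{O}(1)$ constant (using the choice of $\vec{\beta}_1,\vec{\beta}_2$ with $S_h$ in place of $S$), and the discrete inf-sup established in Theorem~\ref{Thm3} provides $\beta_0 = \mathcal{O}(q^{2})$.

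The first step is therefore to record these four constants precisely and rewrite Brezzi's estimate in the schematic form
\begin{equation*}
\|(u,\vec{v})-(u_h,\vec{v}_h)\|_{0,q,1} \leq \left(1+\tfrac{M_{\mathcal{A}}}{\alpha_0}\right)\left(1+\tfrac{M_b}{\beta_0}\right) E_{(u,\vec{v})} + \tfrac{M_b}{\alpha_0}\,E_{\vec{\alpha}},
\end{equation*}
\begin{equation*}
\|\vec{\alpha}-\vec{\alpha}_h\|_{\mathrm{Div}} \leq \tfrac{1}{\beta_0}\left[M_{\mathcal{A}}\left(1+\tfrac{M_{\mathcal{A}}}{\alpha_0}\right)\left(1+\tfrac{M_b}{\beta_0}\right)E_{(u,\vec{v})} + \left(M_b + \tfrac{M_b M_{\mathcal{A}}}{\alpha_0}\right) E_{\vec{\alpha}}\right].
\end{equation*}
Substituting $M_{\mathcal{A}} = CBq^4$, $\alpha_0 = C$, $M_b = Cq^4$, $\beta_0 = Cq^2$ and using $sq^{-4} \leq B \leq 1$ to absorb cross-terms, the first inequality collapses to \eqref{79} and the second to \eqref{80}. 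The split into the mixed powers $Bq^4 + B^{1/2}q^4$ and $B^{3/2}q^4 + Bq^4$ arises when estimating products such as $M_{\mathcal{A}}\cdot M_b/\beta_0 = CBq^6$ against $M_{\mathcal{A}}= CBq^4$, where the assumption $B\leq 1$ lets us trade one factor of $q^2$ for $B^{-1/2}$ via $q^2 \leq B^{-1/2}(Bq^4)^{1/2}$, producing a $B^{1/2}q^4$ term.

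I would carry out the proof in three short steps: (i) assemble the four constants and invoke the abstract Brezzi estimates above; (ii) for each of the four constant groupings, expand in $B$ and $q$, noting $1\leq B q^4/s$ to eliminate additive $1$'s; (iii) apply the trade $q^2 \leq B^{-1/2}(Bq^4)^{1/2}$ whenever an extra factor of $q^2$ appears from $M_b/\beta_0$, so as to recover the precise powers stated in \eqref{79}--\eqref{80}. Finally, $C_1$ and $C_2$ are identified as the maxima of the resulting generic constants, which depend only on $s$, $\mu_1$, $m$, and shape regularity, but not on $h$, $B$, or $q$.

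The main obstacle is not the abstract framework, which is routine, but rather the bookkeeping needed to match the precise mixed $B$--$q$ powers appearing in \eqref{79}--\eqref{80}. In particular, the $B^{1/2}q^4$ and $B^{3/2}q^4$ terms require the $q^2 \leftrightarrow B^{-1/2}$ rebalancing described above, and one must check that each such trade is legitimate under $sq^{-4} \leq B \leq 1$; otherwise a naive application of Brezzi's bounds gives worse powers of $q$ than claimed.
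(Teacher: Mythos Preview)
Your proposal takes essentially the same approach as the paper: invoke the standard Brezzi error estimate (the paper cites \cite[Theorem 5.2.2]{MR3097958}) with continuity constants $M_{\mathcal{A}}=\mathcal{O}(Bq^4)$, $M_b=\mathcal{O}(q^4)$, coercivity constant $\mathcal{O}(1)$, and inf-sup constant $\mathcal{O}(q^2)$. The paper's proof is in fact only these two sentences; your plan supplies considerably more detail than the paper itself does. One caution on your bookkeeping: the ``trade'' you write, $q^{2}\leq B^{-1/2}(Bq^{4})^{1/2}$, is an identity, so it cannot by itself reduce a $Bq^{6}$ term to $B^{1/2}q^{4}$ (that would require $B^{1/2}q^{2}\leq C$, which is not assumed). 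Getting the precise mixed $B$--$q$ exponents in \eqref{79}--\eqref{80} therefore hinges on the exact form of the constants in the cited theorem rather than on the schematic product $(1+M_{\mathcal{A}}/\alpha_0)(1+M_b/\beta_0)$ you wrote down; the paper does not spell this out either.
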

\begin{proof}
	The standard error estimate, for example in \cite[Theorem 5.2.2]{MR3097958}, leads to \eqref{79} and \eqref{80}. Note that $\mathcal{A}$ and $b$ in \eqref{keyy1}-\eqref{keyy2} are continuous with $\mathcal{O}(Bq^4)$ and $\mathcal{O}(q^4)$ continuity constants, respectively, the coercivity constant is $\mathcal{O}(1)$, and the inf-sup constant is $\mathcal{O}(q^{2})$. 
\end{proof}
In the next corollary, we bound the approximation errors $E_{(u,\vec{v})} \ \text{and}\ E_{\vec{\alpha}}$ when $u\in H^{k+5}(\Omega)$ and $\boldsymbol{T}\in \boldsymbol{C}^{k+2}(\Omega)$. Note that, in this case, $\vec{v}=\nabla u\in \left[H^{k+4}(\Omega)\right]^2$ and $\vec{\alpha}=\nabla\cdot\left(\nabla\nabla u+q^2\boldsymbol{T}u\right)\in \left[H^{k+2}(\Omega)\right]^2$ by Corollary~\ref{Unique}. 
\begin{corollary}\label{Cor7}
	Let the assumptions of Corollary~\ref{cor:error} be satisfied and write $\vec{\alpha}=\nabla\phi+\nabla\times p$.  If, furthermore, $\phi\in H^{k+3}(\Omega)$ and $p\in \varrho$ (as defined in Remark~\ref{rem2.2}), then

	\begin{eqnarray}
		E_{\left(u,\vec{v}\right)}\leq Ch^{k+1}\bigg(|u|_{k+1}^2 +\frac{1}{q^4} |\vec{v}|_{k+3}^2\bigg)^{1/2}\label{E76},\\
		E_{\vec{\alpha}}\leq \frac{C}{q^2}h^{k+1}\left(|\nabla \phi|^2_{k+1}+|\Delta\phi|_{k+1}^2+|p|^2_{k+2}\right)^{1/2}.\label{E77}
	\end{eqnarray} 		
\end{corollary}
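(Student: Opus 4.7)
The plan is to pick explicit near-optimal elements inside each infimum. For $E_{(u,\vec{v})}$, I would take $\phi_h$ to be the $L^2$-projection of $u$ into $DG_k(\Omega,\tau_h)$ and $\vec{\psi}_h$ to be a Scott--Zhang-type interpolant of $\vec{v}$ in $V_h$ that preserves the essential Dirichlet condition on $\Gamma_{1}$ and the tangential condition on $\Gamma_{0,2}$. Under the regularity $u\in H^{k+5}(\Omega)$ (so that $\vec{v}=\nabla u\in [H^{k+4}(\Omega)]^2$ by Corollary~\ref{Unique}), standard estimates give $\|u-\phi_h\|_0\leq Ch^{k+1}|u|_{k+1}$ and $\|\vec{v}-\vec{\psi}_h\|_1\leq Ch^{k+2}|\vec{v}|_{k+3}$. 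Substituting these into the weighted product norm~\eqref{eq:wtd_product_norm} and absorbing the extra factor of $h\leq 1$ in the second term into the constant yields~\eqref{E76}.

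For $E_{\vec{\alpha}}$, the natural candidate is the canonical Raviart--Thomas interpolant, $\vec{\beta}_h = \Pi_{k+1}^{RT}\vec{\alpha}$. Because $\vec{\alpha}\cdot\vec{n}=0$ on $\Gamma_{3}$ and the edge degrees of freedom defining $\Pi_{k+1}^{RT}$ involve only the normal trace, this interpolant automatically lies in $RT^{\Gamma_{3}}_{k+1}(\Omega,\tau_h)\subset RT_{k+1}(\Omega,\tau_h)$. The commuting-diagram property gives $\nabla\cdot(\vec{\alpha}-\vec{\beta}_h) = (I-\Pi_{k}^{DG})(\nabla\cdot\vec{\alpha})$, so $\|\nabla\cdot(\vec{\alpha}-\vec{\beta}_h)\|_0\leq Ch^{k+1}|\nabla\cdot\vec{\alpha}|_{k+1}$. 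Using $\vec{\alpha}=\nabla\phi+\nabla\times p$ and $\nabla\cdot(\nabla\times p)=0$ this becomes $Ch^{k+1}|\Delta\phi|_{k+1}$. The usual $L^2$ bound for $\Pi_{k+1}^{RT}$ then produces $\|\vec{\alpha}-\vec{\beta}_h\|_0\leq Ch^{k+1}|\vec{\alpha}|_{k+1}\leq Ch^{k+1}(|\nabla\phi|_{k+1}+|p|_{k+2})$, where I use that $|\nabla\times p|_{k+1}\leq C|p|_{k+2}$.

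To control the two Helmholtz-weighted contributions that remain in~\eqref{eq:Div_norm}, I would decompose $\vec{\alpha}-\vec{\beta}_h = \nabla\phi_e + \nabla\times p_e$ via Lemma~\ref{lemma-helm-cont}, giving $\phi_e\in H^1_{\Gamma_{0}}$ and $p_e\in H^1_{\Gamma_{3}}$. The $L^2$-orthogonality of the decomposition yields $\|\nabla\phi_e\|_0^2 + \|\nabla\times p_e\|_0^2 = \|\vec{\alpha}-\vec{\beta}_h\|_0^2$, so each is controlled by the $L^2$ error already established. Since $\|\nabla\times p_e\|_0 = \|\nabla p_e\|_0$, the Poincar\'e inequality of Lemma~\ref{Lemma-poinc} (applied with either the boundary seminorm on $\Gamma_{3}$ or, in the degenerate case $\Gamma_{3}=\emptyset$, the zero-mean seminorm guaranteed by Lemma~\ref{lemma-helm-cont}) delivers $\|p_e\|_0\leq C\|\nabla p_e\|_0\leq C\|\vec{\alpha}-\vec{\beta}_h\|_0$. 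Substituting the three pieces into~\eqref{eq:Div_norm}, pulling out the common $q^{-4}$, and taking square roots yields~\eqref{E77}. I expect the only subtlety to be this Poincar\'e step, where one must match the correct seminorm in Lemma~\ref{Lemma-poinc} to whichever boundary case of Lemma~\ref{lemma-helm-cont} applies; beyond this, the argument reduces entirely to standard RT and Lagrange interpolation bounds under the hypotheses ensured by Corollary~\ref{Unique}.
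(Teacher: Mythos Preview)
Your argument is correct and takes a genuinely different route from the paper for the bound on $E_{\vec{\alpha}}$. The paper constructs $\vec{\beta}_h$ by building its \emph{discrete} Helmholtz components separately: it obtains $\nabla_h^{\Gamma_{3}}\phi_h$ as the flux variable in an auxiliary mixed Poisson solve (so that $\|\nabla\phi-\nabla_h^{\Gamma_{3}}\phi_h\|_{\mathrm{div}}\le Ch^{k+1}(|\nabla\phi|_{k+1}+|\Delta\phi|_{k+1})$) and takes $p_h$ to be the Lagrange interpolant of $p$, giving $\|p-p_h\|_0\le Ch^{k+2}|p|_{k+2}$. You instead take $\vec{\beta}_h=\Pi_{k+1}^{RT}\vec{\alpha}$ directly, use the commuting diagram and the standard $L^2$ bound, and then reduce the $\|\cdot\|_{\mathrm{Div}}$ norm of the error to its $H(\mathrm{div})$ norm via the Poincar\'e inequality applied to the curl potential $p_e$ in the continuous Helmholtz decomposition of $\vec{\alpha}-\vec{\beta}_h$. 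In effect you establish the norm comparison $\|\cdot\|_{\mathrm{Div}}\le Cq^{-2}\|\cdot\|_{H(\mathrm{div})}$ and invoke classical RT interpolation, which is more elementary and avoids the auxiliary mixed problem. The paper's construction, on the other hand, isolates the $p$-contribution at one order higher ($h^{k+2}$), though this extra precision is not needed for~\eqref{E77}. For $E_{(u,\vec{v})}$ the two arguments coincide up to the specific choice of interpolant; your explicit mention of a Scott--Zhang-type operator is a sensible way to respect the essential boundary constraints in $V_h$, which the paper leaves implicit.
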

\begin{proof}
	Inequality \eqref{E76} holds using the classical continuous/discontinuous Lagrange interpolants~\cite{MR3097958}. To prove Inequality \eqref{E77}, we use the fact that
	$\vec{\alpha}\in [H^{k+2}(\Omega)]^2\cap H_{\Gamma_{3}}(\mathrm{div};\Omega)$ and, therefore, the functions $p$ and $\phi$ are in $H^{k+2}(\Omega)\cap H_{\Gamma_{0}}^1(\Omega)$ and  $H^{k+3}(\Omega)\cap H_{\Gamma_{3}}^1(\Omega)$ respectively if the conditions of Remark~\ref{rem2.2} are satisfied.  We bound $\|\vec{\alpha} - \vec{\beta}_h\|_{\mathrm{Div}}$ by writing $\vec{\beta}_h=\nabla_h^{\Gamma_{3}}\phi_h+\nabla\times p_h$ and noting that
	\[
	\|\vec{\alpha} - \vec{\beta}_h\|_{\mathrm{Div}}^2 = q^{-4}\left(\|p-p_h\|_0^2 + \|\nabla\phi- \nabla_h^{\Gamma_{3}}\phi_h\|_{\mathrm{div}}^2\right).
	\]
	We choose $\left(\phi_h, \vec{\zeta}_h\right)\in DG_{k}(\Omega,\mathcal{T}_h)\times RT_{k+1}^{\Gamma_{3}}(\Omega,\mathcal{T}_h)$ to be the solution of the mixed Poisson problem,
	\begin{eqnarray*}
		\int_{\Omega}\gamma_h\nabla\cdot\vec{\zeta}_h=\int_{\Omega}\Delta\phi \gamma_h,\ \ \forall \gamma_h\in DG_{k}(\Omega,\mathcal{T}_h),\\
		\int_{\Omega}\vec{\zeta}_h\cdot\vec{\Upsilon}_h+\phi_h\nabla\cdot\vec{\Upsilon}_h=0,\ \ \forall \vec{\Upsilon}_h\in RT^{\Gamma_{a}}_{k+1}(\Omega,\mathcal{T}_h).
	\end{eqnarray*}  
	The standard error estimate for $\vec{\zeta}_h$ is that
	\begin{eqnarray*}
		\|\vec{\zeta}_h-\nabla \phi\|_{\mathrm{div}}\leq Ch^{k+1}\left(|\nabla\phi|_{k+1}+|\Delta\phi|_{k+1}\right);
	\end{eqnarray*}
	however, $\vec{\zeta}_h=\nabla_h^{\Gamma_{3}}\phi_h$, giving
	\begin{equation}
		\|\nabla \phi-\nabla_h^{\Gamma_{3}}\phi_h\|_{\mathrm{div}}\leq Ch^{k+1}\left(|\nabla \phi|_{k+1}+|\Delta \phi|_{k+1}\right).\label{75}
	\end{equation}
	Choosing $p_h$ to be the interpolant of $p$ in $CG_{k+1}^{\Gamma_{0}}(\Omega,\mathcal{T}_h)$ gives
	\begin{equation}\label{key76}
		\|p-p_h\|_0\leq Ch^{k+2}|p|_{k+2}.
	\end{equation}
	Adding Inequalities \eqref{75} and \eqref{key76} leads to \eqref{E77}.
\end{proof}
While we can always compute the discrete Helmholtz decomposition of $\vec{\alpha}_h$, it is not always possible to compute the corresponding continuum Helmholtz decomposition of $\vec{\alpha}$, which would be needed to verify the above results by computing $\|\vec{\alpha}-\vec{\alpha}_h\|_{\mathrm{Div}}$. Therefore, we use the $H(\mathrm{div})$ norm in practice. We next show that the approximation error of $\vec{\alpha}$ in the $H(\mathrm{div})$ norm can be bounded by that in the strengthened norm.
\begin{corollary}\label{cor:first_error}
	Let the assumptions of Corollary \ref{Cor7} be satisfied. Then, 
	\begin{eqnarray}
		q^{-2}\|\vec{\alpha}-\vec{\alpha}_h\|_{\textrm{0}} & \leq& C\left(h^{k+1}|p|_{k+2}+\frac{1}{h}\|\vec{\alpha}-\vec{\alpha}_h\|_{\mathrm{Div}}\right),\label{Cor4-2}
	\end{eqnarray}
	where $C$ is a positive constant independent of $h$.
\end{corollary}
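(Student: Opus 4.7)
The plan is to apply the $L^2$-orthogonal continuum Helmholtz decomposition of Lemma~\ref{lemma-helm-cont} to the error, writing
\[
\vec{\alpha} - \vec{\alpha}_h = \nabla\eta + \nabla\times r,
\]
with $\eta = \phi - \tilde\phi_h \in H^1_{\Gamma_0}(\Omega)$ and $r = p - \tilde p_h \in H^1_{\Gamma_3}(\Omega)$, where $(\tilde\phi_h, \tilde p_h)$ denote the continuum Helmholtz potentials of the discrete function $\vec{\alpha}_h$. By $L^2$-orthogonality and the two-dimensional identity $\|\nabla\times r\|_0 = \|\nabla r\|_0$, one gets $\|\vec{\alpha}-\vec{\alpha}_h\|_0^2 = \|\nabla\eta\|_0^2 + \|\nabla r\|_0^2$, so it suffices to control the two pieces separately.

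The gradient piece is immediate: the quantity $q^{-4}\|\nabla\eta\|_0^2$ is one of the constituents of $\|\vec{\alpha}-\vec{\alpha}_h\|_{\mathrm{Div}}^2$ by the definition in~\eqref{eq:Div_norm}, so $q^{-2}\|\nabla\eta\|_0 \leq \|\vec{\alpha}-\vec{\alpha}_h\|_{\mathrm{Div}}$, which is absorbed into the $h^{-1}\|\vec{\alpha}-\vec{\alpha}_h\|_{\mathrm{Div}}$ term on the right-hand side. For the curl piece I would reuse the interpolant $p_h^I \in CG_{k+1}^{\Gamma_3}(\Omega,\tau_h)$ of $p$ constructed in Corollary~\ref{Cor7}, satisfying $\|\nabla(p-p_h^I)\|_0 \leq Ch^{k+1}|p|_{k+2}$ and $\|p-p_h^I\|_0 \leq Ch^{k+2}|p|_{k+2}$, and split via triangle inequality
\[
\|\nabla r\|_0 \leq \|\nabla(p - p_h^I)\|_0 + \|\nabla(p_h^I - \tilde p_h)\|_0.
\]
The first term supplies the advertised $h^{k+1}|p|_{k+2}$ contribution. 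For the second, the idea is that $p_h^I$ is a piecewise polynomial of fixed degree with vanishing trace on $\Gamma_3$, so the standard inverse inequality $\|\nabla v_h\|_0 \leq Ch^{-1}\|v_h\|_0$ applies on $CG_{k+1}^{\Gamma_3}$. Inserting an $H^1$-stable quasi-interpolant $\pi_h\tilde p_h \in CG_{k+1}^{\Gamma_3}$ and applying the inverse inequality to the discrete difference $p_h^I - \pi_h\tilde p_h$, followed by the triangle inequality, yields
\[
\|\nabla(p_h^I - \tilde p_h)\|_0 \leq Ch^{-1}\bigl(\|p - p_h^I\|_0 + \|p - \tilde p_h\|_0\bigr) + \|\nabla(\tilde p_h - \pi_h \tilde p_h)\|_0,
\]
and the bound $q^{-2}\|p - \tilde p_h\|_0 \leq \|\vec{\alpha}-\vec{\alpha}_h\|_{\mathrm{Div}}$, which is another constituent of~\eqref{eq:Div_norm}, produces the $q^2 h^{-1}\|\vec{\alpha}-\vec{\alpha}_h\|_{\mathrm{Div}}$ piece of the stated inequality.

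The main obstacle is controlling the projection residual $\|\nabla(\tilde p_h - \pi_h\tilde p_h)\|_0$, since $\tilde p_h$ is defined only implicitly through the continuum Helmholtz decomposition of the discrete $\vec{\alpha}_h$ and is not itself a finite-element function. I would handle this by exploiting that $\tilde p_h$ solves a Poisson problem on $\Omega$ whose right-hand side is the scalar curl of $\vec{\alpha}_h$, a piecewise polynomial of bounded degree; standard elliptic regularity on the simply-connected Lipschitz domain, together with a Scott--Zhang-type choice of $\pi_h$, then bounds the residual in the same form as the interpolation error on $p$, so that the overall estimate retains the order $h^{k+1}|p|_{k+2} + h^{-1}\|\vec{\alpha}-\vec{\alpha}_h\|_{\mathrm{Div}}$.
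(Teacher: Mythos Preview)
Your approach has a genuine gap at the final step. The residual $\|\nabla(\tilde p_h - \pi_h\tilde p_h)\|_0$ cannot be controlled by elliptic regularity in the way you suggest. The continuum curl potential $\tilde p_h$ of $\vec{\alpha}_h\in RT_{k+1}^{\Gamma_3}(\Omega,\tau_h)$ is characterised by $\nabla\times\tilde p_h=\vec{\alpha}_h-\nabla\tilde\phi_h$, and while $\nabla\tilde\phi_h\in[H^1(\Omega)]^2$ (since $\nabla\cdot\vec{\alpha}_h\in L^2(\Omega)$), the Raviart--Thomas field $\vec{\alpha}_h$ is only in $[L^2(\Omega)]^2$: its tangential component jumps across interior edges, so the distributional scalar curl of $\vec{\alpha}_h$ carries singular contributions on the skeleton and is \emph{not} in $L^2(\Omega)$. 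Consequently $\tilde p_h$ is guaranteed no more than $H^1(\Omega)$ regularity, uniformly in $h$, and any Scott--Zhang estimate yields only $\|\nabla(\tilde p_h-\pi_h\tilde p_h)\|_0\leq C\|\nabla\tilde p_h\|_0\leq C\|\vec{\alpha}_h\|_0$, which does not decay. Even if some fractional regularity were available, the resulting seminorm $|\tilde p_h|_s$ would depend on $h$ through $\vec{\alpha}_h$, and there is no mechanism tying it back to $|p|_{k+2}$ or to $\|\vec{\alpha}-\vec{\alpha}_h\|_{\mathrm{Div}}$.

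The fix is to decompose $\vec{\alpha}_h$ using the \emph{discrete} Helmholtz decomposition of Lemma~\ref{lemma,helm} rather than the continuum one. This produces $\vec{\alpha}_h=\nabla_h^{\Gamma_3}\phi_h+\nabla\times p_h$ with $p_h\in CG_{k+1}^{\Gamma_3}(\Omega,\tau_h)$ a genuine finite-element function. One then bounds $\|\nabla\times(p-p_h)\|_0$ by inserting the Lagrange interpolant $z_h\in CG_{k+1}(\Omega,\tau_h)$ of $p$: the piece $\|\nabla\times(p-z_h)\|_0\leq Ch^{k+1}|p|_{k+2}$ is the advertised interpolation term, while $\|\nabla\times(z_h-p_h)\|_0\leq Ch^{-1}\|z_h-p_h\|_0$ is now a legitimate inverse inequality between two discrete functions. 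A triangle inequality $\|z_h-p_h\|_0\leq\|z_h-p\|_0+\|p-p_h\|_0$ and the bound $q^{-2}\|p-p_h\|_0\leq\|\vec{\alpha}-\vec{\alpha}_h\|_{\mathrm{Div}}$ finish the argument with no residual term left over. The essential point is that the discrete decomposition places the curl potential directly in the finite-element space, so no quasi-interpolation of a non-discrete object is ever needed.
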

\begin{proof}
	Write $\vec{\alpha}_h=\nabla_h^{\Gamma_{3}}\phi_h+\nabla\times p_h$. Then we have 
	\begin{eqnarray}
		\|\vec{\alpha}-\vec{\alpha}_h\|_0&\leq& C\left(\|\nabla\phi-\nabla_h^{\Gamma_{3}}\phi_h\|_0+\|\nabla\times p-\nabla\times p_h\|_0\right)\notag\\
		&\leq&C\left(\|\vec{\alpha}-\vec{\alpha}_h\|_{\textrm{Div}}+\|\nabla\times p-\nabla\times p_h\|_0\right).\label{83}
	\end{eqnarray} 
	Thus, we only need to bound $\|\nabla\times p-\nabla\times p_h\|_0$.
	To do this, we note that
	\begin{eqnarray*}
		\|\nabla\times p-\nabla\times p_h\|_0&\leq& \|\nabla\times p-\nabla\times z_h\|_0+\|\nabla\times z_h-\nabla\times p_h\|,
	\end{eqnarray*}
	where $z_h$ is the interpolant of $p$ in $CG_{k+1}(\Omega,\mathcal{T}_h)$, for which $\|\nabla \times p-\nabla \times z_h\|_0\leq Ch^{k+1}|p|_{k+2}$. Also, using standard arguments, we have that $\|\nabla \times z_h-\nabla \times p_h\|_0\leq \frac{C}{h}\|z_h-p_h\|_{0}$. Thus, 
	\begin{eqnarray}
		\|\nabla\times p-\nabla\times p_h\|_0&\leq& C\left(h^{k+1}|p|_{k+2}+\frac{1}{h}\| z_h-p_h\|_0\right)\notag\\
		&\leq& C\left(h^{k+1}|p|_{k+2}+\frac{1}{h}\| z_h-p\|_0+\frac{1}{h}\|p-p_h\|_0\right)\notag\\
		&\leq&C\left(h^{k+1}|p|_{k+2}+\frac{1}{h}\|p-p_h\|_0\right)\notag\\
		&\leq&C\left(h^{k+1}|p|_{k+2}+\frac{1}{h}\|\vec{\alpha}-\vec{\alpha}_h\|_{\text{Div}}\right)\label{84}.
	\end{eqnarray}
	Combining Inequalities \eqref{83} and \eqref{84} leads to \eqref{Cor4-2}.
\end{proof}	
\begin{remark}\label{rem4.9}
	Let the assumptions of Corollary~\ref{Cor7} be satisfied, and let $W=\left(Bq^2+B^{1/2}q^2\right)$, $Z_1= \bigg(|u|_{k+1}^2 +\frac{1}{q^4} |\vec{v}|_{k+3}^2\bigg)^{1/2}$, and $Z_2=\left(|\nabla \phi|^2_{k+1}+|\Delta\phi|_{k+1}^2+|p|^2_{k+2}\right)^{1/2}$. Then, 
	\begin{align*}
		\|(u,\vec{v})-(u_h,\vec{v}_h)\|_{0,q,1} \leq&C_1q^2h^{k+1}\left(WZ_1+ Z_2\right),
		\\
		\frac{1}{q^2}\|\vec{\alpha}-\vec{\alpha}_h\|_{\textrm{0}} \leq \frac{C}{h}E_{\vec{\alpha}}& \leq C_2B^{1/2}q^2h^k\left(WZ_1+Z_2\right),\\
		\frac{1}{q^2}\|\nabla\cdot\vec{\alpha}-\nabla\cdot\vec{\alpha}_h\|_{\textrm{0}} \leq E_{\vec{\alpha}}& \leq C_3B^{1/2}q^2h^{k+1}\left(WZ_1+Z_2\right). \label{69}
	\end{align*} 
\end{remark}

\subsection{Comparing the three discretizations}

\revise{
The error bounds in~\Cref{cor3,thm,Thm7,rem4.9} specify the convergence of the different methods.  Here, we note some comparative advantages and disadvantages between the three approaches.
}

\revise{
  The conforming Argyris discretization requires the weakest regularity assumptions on $u$ when $\partial\Omega = \Gamma_{3,2}$ and Nitsche boundary conditions can be avoided, needing only $u\in H^3(\Omega)$.  For other boundary conditions, the minimal regularity assumptions for Argyris and C0IP are the same, requiring $u\in H^4(\Omega)$.  The mixed formulation has the strictest regularity assumptions, requiring $u\in H^6(\Omega)$ (and smoothness of the tensor $T$).  The Argyris discretization with $\partial\Omega = \Gamma_{3,2}$ also has the advantage of the best dependence on the problem parameters, with a bound depending on $Bq^2$, in comparison to $Bq^4$ for Argyris with general boundary conditions and C0IP, and $(B+B^{1/2})q^4$ for the mixed formulation.
}

\revise{
  The main disadvantage of the Argyris formulation is the requirement for high polynomial order: we require degree at least 5 in two dimensions (noting we state results here specific to $\text{ARG}_{5}(\Omega,\mathcal{T}_h)$) and degree at least 9 in three dimensions.  Using ninth-order elements in 3D is likely to be cost prohibitive. In addition, their implementation is difficult, and they are not currently implemented in the Firedrake package used in the numerical results in Section~\ref{sec:numerics}, so we cannot confirm this expectation.
  }

\revise{
  It is interesting to note that the error bounds for Argyris with general boundary conditions and the C0IP approach are identical when we consider the C0IP approach with $k=5$ and $4\leq t \leq 6$, to match the Argyris method as best we can, with the exception of the norms in which we measure the error (which are as similar as possible).  This points to a strong advantage of the C0IP approach, as it can be used for any polynomial order $k\geq 2$, providing much greater flexibility than the Argyris approach.  An important, but unresolved, question is whether effective linear solvers exist for this discretization, noting that the interior penalty terms tend to lead to conditioning issues in the resulting linear systems.
  }

\revise{
Finally, we note that the mixed approach seems to offer both advantages and disadvantages.  In comparison to C0IP at equal order, we see that the convergence bound for the mixed approach is better than that for C0IP by a factor of $h^2$, assuming sufficient regularity.  For example, if $u\in H^{10}(\Omega)$ and we use fifth-order polynomials, the convergence bounds for Argyris and C0IP scale like $h^4$, while those for the mixed method scale like $h^6$, albeit with a worse constant and in a weaker norm.  Numerical results that follow, however, show that for the case where $B = q^{-4}$, the mixed approach at order 3 offers better $L^2$ approximation of $u$ than the C0IP approach at equal order.  This is, of course, achieved at a cost of substantially more degrees of freedom in the mixed approach.  Preliminary experiments (not reported here) suggest that the development of effective preconditioners for the mixed approach is substantially simpler than for the other discretizations, but this needs to be investigated in future work.}

\section{Numerical experiments}\label{sec:numerics}

To verify the analyses of the three finite-element discretizations, we next present numerical experiments to measure convergence rates. The experiments were done using the finite-element package Firedrake~\cite{rathgeber2017firedrake}, which offers close integration with
PETSc for the linear solvers \cite{balay2018petsc,
	kirby2018solver}. All numerical experiments were run on a workstation with dual 8-core Intel Xeon 1.7 GHz CPUs and 384 GB of RAM.  While the development of efficient linear solvers for these discretizations is an important task, except when noted below, we consider only solution using the sparse direct solvers, PaStiX~\cite{HENON2002301} and MUMPS~\cite{amestoy2001}, which worked best for experiments in 2D and 3D, respectively. The discretization codes and major components of Firedrake used in these experiments are archived on Zenodo~\cite{zenodo/Firedrake-20220721.0}.
We use the method of manufactured solutions to estimate convergence rates, where we fix forcing terms and boundary data for the PDE to exactly match those for a known solution, $u$. 
\subsection{2D experiments}
In all experiments, we consider uniform triangular meshes of the unit square in two dimensions, generated by uniformly meshing the unit square into square elements with edge length $h = 1/N$, and then cutting each square into two triangles, from bottom left to top right.  For the tests below, we write the boundary of the unit square as $\partial\Omega=\Gamma_N\cup\Gamma_S\cup\Gamma_E\cup\Gamma_W$, denoting the North, South, East, and West edges of the square, and fix $\Gamma_{0,2}=\Gamma_S, \Gamma_{0,1}=\Gamma_N, \Gamma_{3,2}=\Gamma_E,$ and $\Gamma_{3,1}=\Gamma_W$. For mesh size $h$, we define $u_h$ to be the finite-element solution on the mesh and the approximation error to be $E_h=u-u_h$. We can measure $E_h$ in several ways, such as the absolute $L^2$-error, which we denote by ${Abs}_e(u_h, h)=\|E_h\|_0$. Similar definitions are used, as needed, for other quantities, such as the weighted $H^2$-error in $u_h$ as given in \eqref{33} and \eqref{C0-norm}, the weighted $L^2\times H^1$-error in $(u_h,\vec{v}_h)$ as given in \Cref{eq:wtd_product_norm}, and the weighted $L^2$-norm and $H(\mathrm{div})$-seminorm errors in $\vec{\alpha}_h$.
Table~\ref{DoFs} records the number of degrees of freedom in the system matrix for the conforming, C0IP, and mixed discretizations with $u\in \text{ARG}_{5}(\Omega,\mathcal{T}_h)$,	$u\in CG_{k}(\Omega,\mathcal{T}_h))$, $k=2,3,4$, and $\left(u,\vec{v},\vec{\alpha}\right)\in DG_{k}(\Omega,\mathcal{T}_h)\times V_{k+2}(\Omega,\mathcal{T}_h)\times RT_{k+1}(\Omega,\mathcal{T}_h)$, $k=1,2,3$, respectively, on the different grids of a unit square domain. We note that the theoretical results in Section~\ref{sec:discrete} establish convergence rates of $\mathcal{O}(h^{k-1})$ in the weighted $H^2$ norm and $\mathcal{O}(h^{k+1})$ convergence rates in the weighted $L^2\times H^1$ product norm for the C0IP and mixed discretizations, respectively.  Comparing, for example, C0IP with $k=3$ to the mixed method with $k=1$, we see that the mixed method requires about four times the number of degrees of freedom to achieve the same convergence rate.
\begin{table}
	\centering
	\caption{Numbers of degrees of freedom in the system matrix for the conforming , C0IP, and mixed discretizations with $u\in \text{ARG}_{5}(\Omega,\mathcal{T}_h)$,	$u\in CG_{k}(\Omega,\mathcal{T}_h))$, $k=2,3,4$, and $\left(u,\vec{v},\vec{\alpha}\right)\in DG_{k}(\Omega,\mathcal{T}_h)\times V_{k+2}(\Omega,\mathcal{T}_h)\times RT_{k+1}(\Omega,\mathcal{T}_h)$, $k=1,2,3$, respectively on a unit square domain. \label{DoFs}}
	\begin{tabular}{ c| c  |c c c |c c c}  
		\toprule
		\multicolumn{1}{c|}
		{${h^{-1}}$} & \multicolumn{1}{c}{Conforming}& \multicolumn{3}{|c|}{C0IP}&\multicolumn{3}{c}{mixed} \\ \midrule
		&Argyris& $k=2$ & $k=3$ &$k=4$ &$k=1$& $k=2$&$k=3$\\ \midrule
		$2^6$&37,766&16,641&37,249&66,049&140,290&267,650&43,5970 \\ 
		$2^7$&149,254&66,049&148,225&263,169&559,106&1,067,778 & 1,740,290\\ 
		$2^8$&593,414&263,169&591,361&1,050,625&2,232,322&4,265,474 & 6,953,986 \\ 	
		$2^9$&2,366,470&1,050,625&2,362,369&4,198,401&8,921,090 & 17,050,626&27,801602\\ 
		\bottomrule
	\end{tabular}
\end{table}

We first consider an exact solution given by $u=\sin\left(q\vec{\nu}\cdot \left[x,y\right]\right)$, with $q=40$, $\boldsymbol{T}=\vec{\nu}\otimes\vec{\nu}$, $\vec{\nu}=\left[\frac{3}{5},\frac{4}{5}\right]$, and $m=10$. We plot $\log({Abs}_e(\cdot))$ against $\log_2(1/h)$, so that slopes of the data plotted correspond to the experimental convergence rates. We approximate the slope, $\mathfrak{s}$, of each line using last two points.
	Figure~\ref{tab:ex1_bigB} presents results for $B=1$.  Here, and in the subsequent figures, we use solid markers on lines corresponding to data for Argyris and C0IP results, and open markers on results for the mixed formulations.  In all cases, we use solid green lines for the case of $u_h \in \text{ARG}_5(\Omega,\mathcal{T}_h)$, and dense dotted brown lines for $k=1$, dotted blue lines for $k=2$, dashed red lines for $k=3$, and dash-dotted orange lines for $k=4$ when we present data either for the C0IP method with $u_h\in CG_{k}(\Omega,\mathcal{T}_h)$ (which we consider for $k=2,3,4$) or the mixed method with $(u_h,\vec{v}_h,\vec{\alpha}_h)\in DG_{k}(\Omega,\mathcal{T}_h)\times V_{k+2}\times RT_{k+1}(\Omega,\mathcal{T}_h)$ (which we consider for $k=1,2,3$).

As shown in Figure~\ref{tab:ex1_bigB} (left), the Argyris method results in the best approximation on each grid (as expected, since it is the highest-order discretization) and we see an overall convergence rate in the weighted $H^2$ norm of $\mathcal{O}(h^4)$, as expected from the theoretical analysis in Section~\ref{ssec:conforming}.  Beyond the scope of those results, we observe slightly improved, $\mathcal{O}(h^5)$, convergence for the Argyris method in the $L^2$ norm.  For the C0IP method, we see a lack of convergence in the weighted $H^2$ norm for the case of $k=2$; this seems contrary to the theoretical analysis in Section~\ref{ssec:C0IP}, but we note that $Bq^4h$ is quite large for the values of $q$ and $h$ considered here, and the predicted convergence is seen in the next example when we consider a (much) smaller value for $B$.  Moreover, we observe the expected $\mathcal{O}(h^{k-1})$ convergence for $k=3$ and $4$. For even values of $k$, we see improved convergence in the $L^2$ norm in comparison with the weighted $H^2$ norm, giving a $\mathcal{O}(h^{k})$ for $k=2$ and $4$, but no improvement in the convergence rate in the $L^2$ norm for $k=3$.  Understanding these gaps in performance is an interesting question for future work.

Finally, for the mixed method, in Figure~\ref{tab:ex1_bigB} (right) we observe $\mathcal{O}(h^{k+1})$ convergence in both the $L^2$ norm of $u$ and weighted $H^1$ norm of $\vec{v}$, consistent with the analysis of this formulation in Section~\ref{ssec:mixed}.  Furthermore, we observe that the $L^2$ norm of $\vec{\alpha}$ converges like $\mathcal{O}(h^k)$ as predicted in Remark~\ref{rem4.9}, while we observe one order higher convergence for the $H(\text{div})$ seminorm error in $\vec{\alpha}$.  An important observation here is that the added cost of the mixed method over the C0IP approach appears to pay off, with improved convergence rates consistent with the analysis (and smaller errors overall); thus, while the mixed formulation has more degrees of freedom at the same order as C0IP, we gain something in the quality of our approximation for that price.  (Indeed, the mixed method with $k=3$ attains roughly the same weighted-norm convergence rate and errors as the fifth-order Argyris discretization.)

\begin{figure}
	\begin{subfigure}{.5\textwidth}
		\centering		
		\begin{tikzpicture}[scale=.55]
		\begin{semilogyaxis}[
		title=Errors for Argyris and C0IP,
		xlabel={$\log_2(1/h)$},
		xtick={6,7,8,9},
		ylabel={$\log\left(Abs_e(\cdot,h)\right)$},
		ymin=1e-14, ymax =3,
		legend pos=outer north east
		]
		\addplot[
		mark=triangle*,mark options={scale=2,solid}, color=seabornblue,dotted,thick
		]
		coordinates {
			(6,0.41369057711826757)
			(7,0.17354517085795468)
			(8,0.12352119994082526)
			(9,0.11607779971346532)
		};
		\addplot[
		mark=*,mark options={scale=1,solid},color=seabornblue,dotted,thick
		]
		coordinates {
			(6,0.25028722984593377)
			(7,0.08809160429998084)
			(8,0.024382598978190836)
			(9,0.006241568783881783)
		};
		\addplot[
		mark=triangle*,mark options={scale=2,solid},thick, color=seabornred,dashed
		]
		coordinates {
			(6,0.07087513760285062)
			(7,0.019175626219412826)
			(8,0.00490215933880805)
			(9,0.001233344457842171)
		};
		\addplot[
		mark=*,mark options={scale=1,solid},thick, color=seabornred,dashed
		]
		coordinates {
			(6,0.012779416424464157)
			(7,0.003118301048895839)
			(8,0.0007896305525275524)
			(9,0.00019904557926734206)
		};
		\addplot[
		mark=triangle*,mark options={scale=2,solid},thick, color=orange,dash pattern={on 7pt off 2pt on 1pt off 3pt}
		]
		coordinates {
			(6,0.0007463839415142511)
			(7,9.798189951248248e-05)
			(8,1.2418152920139371e-05)
			(9,1.5582994793484387e-06)
		};
		
		\addplot[
		mark=*,mark options={scale=1,solid},color=orange,dash pattern={on 7pt off 2pt on 1pt off 3pt},thick
		]
		coordinates {
			(6,2.1015285174195684e-05)
			(7, 1.2993118638683074e-06)
			(8, 8.250882870970825e-08)
			(9,5.22098775046618e-09)
		};
		
		\addplot[
		mark=triangle*,mark options={scale=2,solid},color=seaborngreen,thick
		]
		coordinates {
			(6,5.958162520111303e-06)
			(7,3.6298068767668254e-07)
			(8, 2.247062536071873e-08)
			(9,1.3988975204332626e-09)
		};
		
		\addplot[
		mark=*,mark options={scale=1,solid},color=seaborngreen,thick
		]
		coordinates {
			(6, 5.0377428450523854e-08)
			(7, 1.284866114809077e-09)
			(8, 3.93020063545978e-11)
			(9, 1.2981092585199718e-12)
		};
		
		\legend{ $\mathfrak{s}=-0.09$, $\mathfrak{s}=-1.97$, $\mathfrak{s}=-1.99$, $\mathfrak{s}=-1.99$, $\mathfrak{s}=-2.99$, $\mathfrak{s}=-3.98$, $\mathfrak{s}=-4.00$, $\mathfrak{s}=-4.92$}
		\end{semilogyaxis}
		\end{tikzpicture}		
	\end{subfigure}%
	\begin{subfigure}{.5\textwidth}
		\centering		
		\begin{tikzpicture}[scale=.55]
		\begin{semilogyaxis}[
		title=Errors for mixed formulation,
		xlabel={$\log_2(1/h)$},
		xtick={6,7,8,9},
		ylabel={$\log\left(Abs_e(\cdot,h)\right)$},
		ymin=1e-14, ymax =3,
		legend pos=outer north east
		]
		\addplot[
		mark=triangle,mark options={scale=2,solid}, color=brown,densely dotted,thick 
		]
		coordinates {
			(6, 3.37383522e-03)
			(7, 8.36600300e-04)
			(8, 2.08896979e-04)
			(9, 5.22167430e-05)
		};
		\addplot[
		mark=o,mark options={scale=1,solid}, color=brown,densely dotted,thick 
		]
		coordinates {
			(6, 4.14865739e-03)
			(7, 1.03640229e-03)
			(8, 2.59176781e-04)
			(9, 6.48010109e-05)
		};
		\addplot[
		mark=square,mark options={scale=2,solid},color=brown,densely dotted,thick 
		]
		coordinates {
			(6, 0.49847745)
			(7, 0.25935446)
			(8, 0.13111167)
			(9, 0.06576732)
		};
		
		\addplot[
		mark=star,mark options={scale=2,solid},color=brown,densely dotted,thick 
		]
		coordinates {
			(6, 2.45085973e+00)
			(7, 6.23040403e-01)
			(8, 1.56580373e-01)
			(9, 3.92177311e-02)
		};
		
		\addplot[
		mark=triangle,mark options={scale=2,solid}, color=seabornblue,dotted,thick 
		]
		coordinates {			
			(6, 1.38824397e-04)
			(7, 1.74560247e-05)
			(8, 2.18634468e-06)
			(9, 2.73495932e-07)
		};
		\addplot[
		mark=o,mark options={scale=1,solid},color=seabornblue,dotted,thick 
		]
		coordinates {			
			(6, 1.55448580e-04)
			(7, 1.94601676e-05)
			(8, 2.43343209e-06)
			(9, 3.04207493e-07)
		};
		
		\addplot[
		mark=square,mark options={scale=2,solid}, color=seabornblue,dotted,thick 
		]
		coordinates {			
			(6, 9.49636236e-03)
			(7, 2.10762806e-03)
			(8, 5.04155536e-04)
			(9, 1.24001902e-04)
		};
		\addplot[
		mark=star,mark options={scale=2,solid}, color=seabornblue,dotted,thick 
		]
		coordinates {			
			(6, 6.61873872e-02)
			(7, 8.16686855e-03)
			(8, 1.01724394e-03)
			(9, 1.27034742e-04)
		};
		
		\addplot[
		mark=triangle,mark options={scale=2,solid},color=seabornred, dashed,thick 
		]
		coordinates {			
			(6, 3.77361570e-06)
			(7, 2.35750580e-07)
			(8, 1.47362462e-08)
			(9, 9.25500675e-10)
		};	
		\addplot[
		mark=o,mark options={scale=1,solid},color=seabornred, dashed,thick 
		]
		coordinates {			
			(6, 1.54780720e-06)
			(7, 9.67020912e-08)
			(8, 6.04331375e-09)
			(9, 3.77698072e-10)
			
		};
		\addplot[
		mark=square,mark options={scale=2,solid},color=seabornred, dashed,thick 
		]
		coordinates {			
			(6, 1.09301751e-03)
			(7, 1.40031685e-04)
			(8, 1.76169413e-05)
			(9, 2.32546704e-06)
		};		
		
		\addplot[	
		mark=star,mark options={scale=2,solid},color=seabornred, dashed,thick ]	
		coordinates {			
			(6, 2.49205238e-03)
			(7, 1.57684594e-04)
			(8, 9.89496435e-06)
			(9, 6.22465607e-07)
			
		};		
		\legend{$\mathfrak{s}=-2.00 $, $\mathfrak{s}=-2.00 $, $\mathfrak{s}=-1.00 $, $\mathfrak{s}=-2.00 $,$\mathfrak{s}=-3.00 $,$\mathfrak{s}= -3.00$,$\mathfrak{s}=-2.02$,$\mathfrak{s}=-3.00 $,$\mathfrak{s}=-3.99 $,$\mathfrak{s}=-4.00 $, $\mathfrak{s}=-2.92$, $\mathfrak{s}=-3.99$}		
		\end{semilogyaxis}
		\end{tikzpicture}
		
	\end{subfigure}%
	\caption{Absolute approximation errors and rate of convergence for $B=1$.  At left, results for $u_h \in \text{ARG}_5(\Omega,\mathcal{T}_h)$ (solid green lines),  and $u_h\in CG_{k}(\Omega,\mathcal{T}_h)$, $k={2,3,4}$ with the C0IP formulation, where dotted blue, dashed red, and dash-dotted orange lines present results for $k=2,3,4$, respectively.  Discs denote errors in the $L^2$ norm, while triangles denote errors in the appropriately weighted $H^2$ norm.  At right, results for $(u_h,\vec{v}_h,\vec{\alpha})_h\in DG_k(\Omega,\mathcal{T}_h)\times V_{k+2}(\Omega,\mathcal{T}_h)\times RT_{k+1}(\Omega,\mathcal{T}_h)$, with dense dotted brown, dotted blue, and dashed red lines presenting results for $k=1,2,3$, respectively. Here, discs and triangles denote the $L^2$ and weighted $H^1$ errors for $u_h$ and $\vec{v}_h$, respectively, while squares and stars denote the weighted $L^2(\Omega)$ and $H(\mathrm{div})$-seminorm errors for $\vec{\alpha}_h$.}\label{tab:ex1_bigB}
\end{figure}
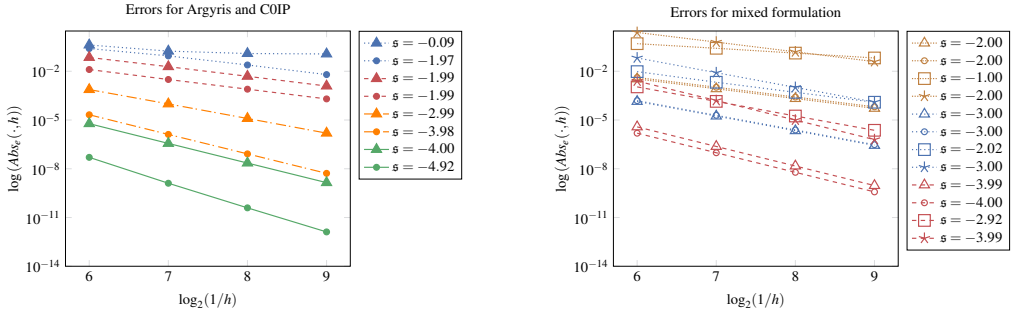

	In Figure~\ref{tab:ex1_smallB}, we consider the same example as above, but with $B=q^{-4}$, to test sensitivity to this parameter.  We first note that, in all but one case, the weighted $H^2$ norm (or $L^2\times H^1$ norm) errors decay at the same rates as in the $B=1$ example above, consistent with the theoretical error bounds.  The one exception is the case of $k=2$ for the C0IP discretization, where we go from no convergence with $B=1$ to convergence like $\mathcal{O}(h)$ for $B=q^{-4}$.  Considering the other norms, we see a slight improvement in the $L^2$ error for the Argyris case (from $\mathcal{O}(h^5)$ to almost $\mathcal{O}(h^6)$), but no other significant changes.  In particular, while our error estimates in all cases depend on quantities like $Bq^4$, we see only small changes in these errors, despite the large change in this value.  The one notable change is for the mixed method, where the errors in $\vec{\alpha}$ with $B=q^{-4}$ are seen to be equal to those with $B=1$ multiplied by a factor of $q^{-4}$. In these experiments, for the mixed formulation with $k = 3$ and $h = 1/512$, the direct solver failed to factorize the linear system in the available memory; therefore, a monolithic multigrid solver similar to the one described in \cite{farrell2021new} was used instead.
\begin{figure}
	\begin{subfigure}{.5\textwidth}
		\centering
		\begin{tikzpicture}[scale=.55]
		\begin{semilogyaxis}[
		title=Errors for Argyris and C0IP,
		xlabel={$\log_2(1/h)$},
		xtick={6,7,8,9},
		ylabel={$\log\left(Abs_e(\cdot,h)\right)$},
		ymin=1e-14, ymax =3,
		legend pos=outer north east
		]

		\addplot[
		mark=triangle*,mark options={scale=2,solid},color=seabornblue,dotted, thick
		]
		coordinates {
			(6, 0.11821146503986717)
			(7, 0.05922234665156641)
			(8, 0.029627065330476616)
			(9, 0.014814545919371206)
		};
		\addplot[
		mark=*,mark options={scale=1,solid},color=seabornblue,dotted,thick
		]
		coordinates {
			(6, 0.002214252776136992)
			(7, 0.0005697623883984998)
			(8, 0.00014428218733346707)
			(9, 3.6222895715883504e-05)
		};
		\addplot[
		mark=triangle*,mark options={scale=2,solid},color=seabornred,dashed,thick
		]
		coordinates {
			(6, 0.006439763881308099)
			(7, 0.0016215661347244923)
			(8, 0.0004061043740297412)
			(9, 0.00010156542633566701)
		};
		\addplot[
		mark=*,mark options={scale=1,solid},color=seabornred,dashed,thick
		]
		coordinates {
			(6, 2.8395352371512826e-05)
			(7, 5.363837696725863e-06)
			(8, 1.307464374623749e-06)
			(9, 3.2651698546213695e-07)
		};
		\addplot[
		mark=triangle*,mark options={scale=2,solid},thick,color=orange,dash pattern={on 7pt off 2pt on 1pt off 3pt}
		]
		coordinates {
			(6, 0.00020524156460125146)
			(7, 2.564766568744513e-05)
			(8, 3.206168263746667e-06)
			(9, 4.0080353040597843e-07)
		};
		
		\addplot[
		mark=*,mark options={scale=1,solid},thick, color=orange,dash pattern={on 7pt off 2pt on 1pt off 3pt}
		]
		coordinates {
			(6, 5.327595040017765e-07)
			(7, 1.884869680238103e-08)
			(8, 8.277853440947071e-10)
			(9, 4.4668925120356555e-11)
		};
		\addplot[
		mark=triangle*,mark options={scale=2,solid},color=seaborngreen,thick
		]
		coordinates {
			(6,5.835456470469256e-06)
			(7,3.591355121845937e-07)
			(8,2.235036083097049e-08)
			(9,1.3951388967370348e-09)
		};
		
		\addplot[
		mark=*,mark options={scale=1,solid},color=seaborngreen,thick
		]
		coordinates {
			(6,1.3587795161494706e-08)
			(7,2.014188653230311e-10 )
			(8, 3.236979218490259e-12)
			(9,6.038172819898544e-14)
		};
		\legend{ $\mathfrak{s}=-1.00$, $\mathfrak{s}=-1.99$, $\mathfrak{s}=-2.00$, $\mathfrak{s}=-2.00$, $\mathfrak{s}=-3.00$, $\mathfrak{s}=-4.21$, $\mathfrak{s}=-4.00$, $\mathfrak{s}=-5.74$}
		\end{semilogyaxis}
		
		\end{tikzpicture}
	\end{subfigure}
	\begin{subfigure}{.5\textwidth}
		\centering
		\begin{tikzpicture}[scale=.55]
		\begin{semilogyaxis}[
		title=Errors for mixed formulation,
		xlabel={$\log_2(1/h)$},
		xtick={6,7,8,9},
		ylabel={$\log\left(Abs_e(\cdot,h)\right)$},
		ymin=1e-14, ymax =3,
		legend pos=outer north east
		]
		\addplot[
		mark=triangle,mark options={scale=2,solid},thick,color=brown, densely dotted
		]
		coordinates {
			(6, 3.36005693e-03) 
			(7, 8.36368398e-04)
			(8, 2.08892947e-04)
			(9, 5.22166508e-05)
		};
		\addplot[
		mark=o,mark options={scale=1,solid},thick,color=brown, densely dotted 
		]
		coordinates {
			(6, 4.13719192e-03)
			(7, 1.03621520e-03)
			(8, 2.59173831e-04)
			(9, 6.48009620e-05)
		};
		\addplot[
		mark=square,mark options={scale=2,solid},thick,color=brown, densely dotted
		]
		coordinates {
			(6, 1.94826920e-07)
			(7, 1.01318086e-07)
			(8, 5.12163756e-08)
			(9, 2.56904775e-08)
		};
		
		\addplot[
		mark=star,mark options={scale=2,solid} ,thick,color=brown, densely dotted
		]
		coordinates {
			(6, 9.33106099e-07)
			(7, 2.41751775e-07)
			(8, 6.10616775e-08)
			(9, 1.53130331e-08)
		};

		\addplot[
		mark=triangle,mark options={scale=2,solid},thick,color=seabornblue,dotted 
		]
		coordinates {			
			(6, 1.38817894e-04)
			(7, 1.74559503e-05)
			(8, 2.18634324e-06)
			(9, 2.73495892e-07)
		};
		\addplot[
		mark=o,mark options={scale=1,solid},thick,color=seabornblue,dotted 
		]
		coordinates {			
			(6, 1.55448310e-04)
			(7, 1.94601616e-05)
			(8, 2.43343188e-06)
			(9, 3.04207486e-07)
		};
		
		\addplot[
		mark=square,mark options={scale=2,solid},thick,color=seabornblue,dotted 
		]
		coordinates {			
			(6, 3.711846052142909e-09)
			(7, 8.234896638160779e-10)
			(8, 1.9695014537825178e-10)
			(9, 4.843914878095911e-11)
		};
		\addplot[
		mark=star,mark options={scale=2,solid},thick,color=seabornblue,dotted 
		]
		coordinates {			
			(6, 2.5755677394805225e-08)
			(7, 3.1867494637091296e-09)
			(8, 3.9725365526380433e-10)
			(9, 4.9619637926264484e-11)
		};

		\addplot[
		mark=triangle,mark options={scale=2,solid},color=seabornred, dashed,thick 
		]
		coordinates {			
			(6, 3.77356680e-06)
			(7, 2.35750326e-07)
			(8, 1.47362441e-08)
			(9, 9.2550084e-10)
		};	
		\addplot[
		mark=o,mark options={scale=1,solid},color=seabornred, dashed,thick 
		]
		coordinates {			
			(6, 1.54780346e-06)
			(7, 9.67020361e-08)
			(8, 6.04331288e-09)
			(9, 3.77697994e-10)
		};
		\addplot[
		mark=square,mark options={scale=2,solid},color=seabornred, dashed,thick 
		]
		coordinates {			
			(6, 4.26966835e-10)
			(7, 5.47001195e-11)
			(8, 6.88162459e-12)
			(9, 9.08411871e-13)
		};		
		
		\addplot[	mark=star,mark options={scale=2,solid},color=seabornred, dashed,thick]	
		coordinates {			
			(6, 9.65204129e-10)
			(7, 6.14614219e-11)
			(8, 3.86310687e-12)
			(9, 2.43493969e-13)
		};		
		
		\legend{$\mathfrak{s}=-2.00 $, $\mathfrak{s}=-2.00$, $\mathfrak{s}=-1.00$, $\mathfrak{s}=-2.00$, $\mathfrak{s}=-3.00$, $\mathfrak{s}=-3.00$, $\mathfrak{s}=-2.02$, $\mathfrak{s}=-3.00$, $\mathfrak{s}=-3.99$, $\mathfrak{s}=-4.00$, $\mathfrak{s}=-2.92$, $\mathfrak{s}=-3.99$ }	
		\end{semilogyaxis}
		\end{tikzpicture}
	\end{subfigure}
	\caption{Absolute approximation errors and rate of convergence for $B=q^{-4}$.  At left, results for $u_h \in \text{ARG}_5(\Omega,\mathcal{T}_h)$ (solid green lines),  and $u_h\in CG_{k}(\Omega,\mathcal{T}_h)$, $k={2,3,4}$ with the C0IP formulation, where dotted blue, dashed red, and dash-dotted orange lines present results for $k=2,3,4$, respectively.  Discs denote errors in the $L^2$ norm, while triangles denote errors in the appropriately weighted $H^2$ norm.  At right, results for $(u_h,\vec{v}_h,\vec{\alpha}_h) \in DG_k(\Omega,\mathcal{T}_h)\times V_{k+2}(\Omega,\mathcal{T}_h)\times RT_{k+1}(\Omega,\mathcal{T}_h)$, with dense dotted brown, dotted blue, and dashed red lines presenting results for $k=1,2,3$, respectively. Here, discs and triangles denote the $L^2$ and weighted $H^1$ errors for $u_h$ and $\vec{v}_h$, respectively, while squares and stars denote the weighted $L^2(\Omega)$ and $H(\mathrm{div})$-seminorm errors for $\vec{\alpha}_h$.}\label{tab:ex1_smallB}
\end{figure}
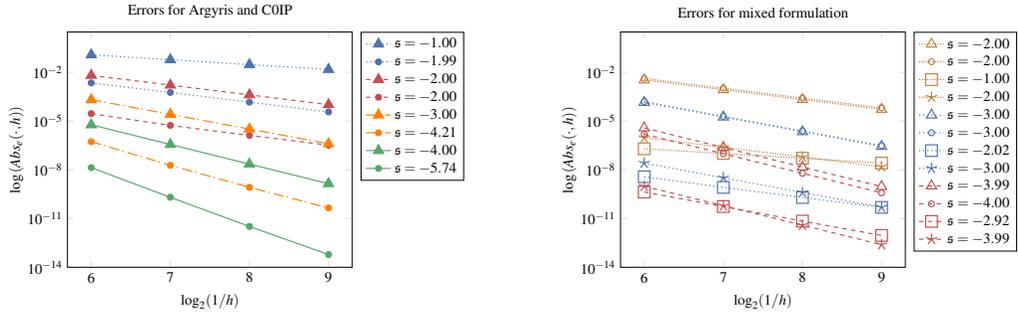

An important question is whether the dependence on $q$ in the theoretical results above is due to inefficient proof techniques, or is an actual dependence that is seen in the finite-element results.  In the next experiments, we fix the values of $m= 10, \ \text{and}\ h=2^{-7}$, while we vary both the manufactured solution and the values of $B$ and $q$.  Figure~\ref{tab:exq1} again considers $B=1$, considering the same solution as above, $u=\sin(q(\frac{3x}{5}+\frac{4y}{5}))$ (left), and a $q$-independent solution, $u_1=100\sin(2 \pi x+3\pi y)\left(xy(1-x)(1-y)\right)^3$ (right). In ~\ref{tab:exq1} (left), we see weighted $H^2$-norm convergence scaling like $\mathcal{O}(q^4)$ for the Argyris results, but like $\mathcal{O}(q^3)$ (or slightly better) for the C0IP and mixed methods.  In contrast, in the $L^2$ norm, the Argyris results scale worse than $\mathcal{O}(q^6)$, the C0IP results scale like $\mathcal{O}(q^4)$, while the mixed method still scales like $\mathcal{O}(q^3)$.  In all cases, these already outperform the theoretical bounds, since the $H^t$-seminorm of $u$ scales like $q^t$.  In contrast, in Figure~\ref{tab:exq1} (right), we see either no growth (for the mixed method) or dependence like negative powers of $q$ (for the Argyris and C0IP methods).  This strongly suggests that it may be possible to improve the dependence on $q$ in the results above, which we leave as a question for future research.  Figure~\ref{tab:exq2} shows similar results for the case of $B=q^{-4}$, highlighting that the dependence on $Bq^4$ (and similar terms) is more likely an artifact of our analysis than an inherent limit on these methods.

\begin{figure}
	\begin{subfigure}{.5\textwidth}
		\centering
		\begin{tikzpicture}[scale=.55]
		\begin{semilogyaxis}[
		title=Errors for $q$-dependent solution,
		xlabel={$\log_2(q)$},
		ymin=1e-14, ymax=3,
		ylabel={$\log\left(Abs_e(\cdot,\frac{1}{2^7})\right)$},
		legend pos=outer north east
		]

		\addplot[
		mark=*,mark options={scale=1,solid},color=seaborngreen,thick
		]
		coordinates {
			(3, 3.8305987158105327e-13)
			(4, 1.180804806660733e-11)
			(5, 3.9752329415491665e-10)
			(6, 1.4817442736280198e-08)
			(7, 1.266563353447447e-06)
		};
		
		\addplot[
		mark=triangle*,mark options={scale=2,solid},color=seaborngreen,thick
		]
		coordinates {
			(3, 5.796596744395899e-10)
			(4, 9.211779090037042e-09)
			(5, 1.480828839554257e-07)
			(6, 2.4070984779249563e-06)
			(7, 4.036131984468305e-05)
		};

		\addplot[
		mark=*,mark options={scale=1,solid},color=orange,dash pattern={on 7pt off 2pt on 1pt off 3pt},thick
		]
		coordinates {
			(3,4.298901006803504e-05)
			(4,0.00022086780269651565)
			(5,0.0015828449854843793)
			(6,0.012890844036280949)
			(7,0.2004452030056078)
		};
		
		\addplot[
		mark=triangle*,mark options={scale=2,solid},color=orange,dash pattern={on 7pt off 2pt on 1pt off 3pt},thick
		]
		coordinates {
			(3, 0.0003590393573491073)
			(4, 0.0019802497484291887)
			(5, 0.011035949675891473)
			(6, 0.06038362230703038)
			(7, 0.35374822539596373)
		};
		
		\addplot[
		mark=o,mark options={scale=1,solid},color=seabornblue,dotted,thick
		]
		coordinates {
			(3, 1.55739433e-07)
			(4, 1.24650671e-06)
			(5, 9.96923232e-06)
			(6, 7.96682582e-05)
			(7, 6.36195597e-04)
		};
		\addplot[
		mark=triangle,mark options={scale=2,solid},color=seabornblue,dotted,thick
		]
		coordinates {
			(3, 2.09292183e-07)
			(4, 1.67503035e-06)
			(5, 1.33940282e-05)
			(6, 1.06960566e-04)
			(7, 8.52009201e-04)
		};

		\legend{$\mathfrak{s}=6.42$,$\mathfrak{s}=4.07$,$\mathfrak{s}=3.96$,$\mathfrak{s}=2.55$, $\mathfrak{s}=3.00$,$\mathfrak{s}=2.99$}
		\end{semilogyaxis}
		\end{tikzpicture}

	\end{subfigure}%
	\begin{subfigure}{.5\textwidth}
		\centering
		\begin{tikzpicture}[scale=.55]
		\begin{semilogyaxis}[
		title=Errors for $q$-independent solution,
		xlabel={$\log_2(q)$},
		ymin=1e-14, ymax=3,
		ylabel={$\log\left(Abs_e(\cdot,\frac{1}{2^7})\right)$},
		legend pos=outer north east
		]

		\addplot[
		mark=*,mark options={scale=1,solid},color=seaborngreen,thick
		]
		coordinates {
			(3, 1.8449940569259869e-09)
			(4, 5.891653475231165e-10)
			(5, 1.7549728799693162e-10)
			(6, 5.051799642172539e-11)
			(7, 1.382384046619963e-11)
		};
		
		\addplot[
		mark=triangle*,mark options={scale=2,solid},color=seaborngreen,thick
		]
		coordinates {
			(3, 2.49197859287577e-07)
			(4, 6.215028512809969e-08)
			(5, 1.560934589440694e-08)
			(6, 3.93842403750654e-09)
			(7, 9.994646348972032e-10)
		};
		
		\addplot[
		mark=*,mark options={scale=1,solid},color=orange,dash pattern={on 7pt off 2pt on 1pt off 3pt},thick
		]
		coordinates {
			(3, 5.158993967565061e-06)
			(4, 4.463478770067244e-06)
			(5, 4.6848811325751127e-07)
			(6, 1.007337565937616e-07)
			(7, 2.4683104100446077e-08)
		};
		
		\addplot[
		mark=triangle*,mark options={scale=2,solid},color=orange,dash pattern={on 7pt off 2pt on 1pt off 3pt},thick
		]
		coordinates {
			(3, 4.7505308733790106e-05)
			(4, 1.6907813974380376e-05)
			(5, 5.611340680872402e-06)
			(6, 1.9648500465547696e-06)
			(7, 6.912221289688313e-07)
		};	
		
		\addplot[
		mark=o,mark options={scale=1,solid},color=seabornblue,dotted,thick
		]
		coordinates {
			(3, 1.05054270e-08)
			(4, 1.05054361e-08)
			(5, 1.05054632e-08)
			(6, 1.05054844e-08)
			(7, 1.05055848e-08)
		};
		
		\addplot[
		mark=triangle,mark options={scale=2,solid},color=seabornblue,dotted,thick
		]
		coordinates {
			(3, 1.36715505e-08)
			(4, 1.36616621e-08)
			(5, 1.36609368e-08)
			(6, 1.36602222e-08)
			(7, 1.36577360e-08)
		};

		\legend{$\mathfrak{s}= -1.87$,$\mathfrak{s}=-1.98 $,$\mathfrak{s}= -2.03$,$\mathfrak{s}=- 1.51$, $\mathfrak{s}= 0.00$,$\mathfrak{s}= 0.00$}
		\end{semilogyaxis}
		\end{tikzpicture}
		
	\end{subfigure}
	
	\caption{The $L^2$ (discs) and weighted $H^2$ (triangles) absolute approximation errors at $1/h=2^7$ and different values of $q$, for the conforming method, with $u_h\in \text{ARG}_5(\Omega,\mathcal{T}_h)$ (solid green lines), the C0IP method with $u_h\in CG_3(\Omega,\mathcal{T}_h)$ (dash-dotted orange lines), and the mixed method with $(u_h,\vec{v}_h)\in DG_{2}(\Omega,\mathcal{T}_h)\times V_{4}(\Omega,\mathcal{T}_h)$ (dotted blue lines) for $B=1$. Left: $u=\sin(q\left(\frac{3x}{5}+\frac{4y}{5}\right))$. Right: $u=100\sin\left(2\pi x+3\pi y\right)\left(xy(1-x)(1-y)\right)^3$.}\label{tab:exq1}
\end{figure}
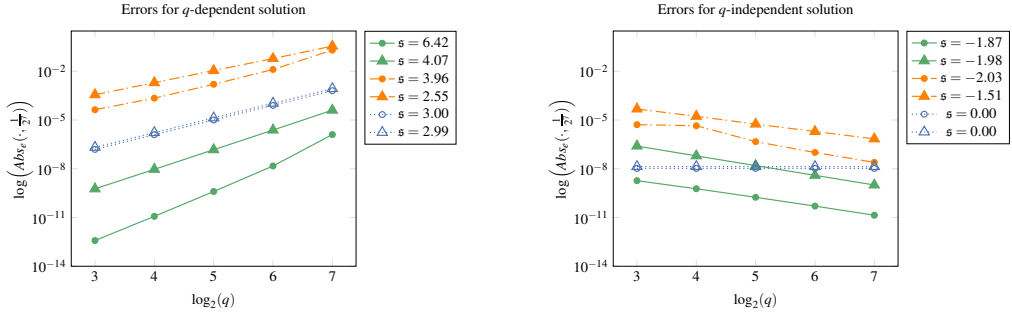

\begin{figure}
	\begin{subfigure}{.5\textwidth}
		\centering
		\begin{tikzpicture}[scale=.55]
		\begin{semilogyaxis}[
		title=Errors for $q$-dependent solution,
		xlabel={$\log_2(q)$},
		ymin=1e-14, ymax=3,
		ylabel={$\log\left(Abs_e(\cdot,\frac{1}{2^7})\right)$},
		legend pos=outer north east
		]

		\addplot[
		mark=*,mark options={scale=1,solid},color=seaborngreen,thick				]
		coordinates {
			(3, 5.937243450779367e-14)
			(4, 1.2724324213571643e-12)
			(5, 5.377057121024738e-11)
			(6, 3.4374973550783754e-09)
			(7, 2.525941560846035e-07)
		};
		
		\addplot[
		mark=triangle*,mark options={scale=2,solid},color=seaborngreen,thick
		]
		coordinates {
			(3, 5.77386141104243e-10)
			(4, 9.158894900701534e-09)
			(5, 1.467770762756348e-07)
			(6, 2.3716265794443516e-06)
			(7, 3.938195663721402e-05)
		};

		\addplot[
		mark=*,mark options={scale=1,solid},color=orange,dash pattern={on 7pt off 2pt on 1pt off 3pt},thick
		]
		coordinates {
			(3,1.3417006010305702e-06)
			(4, 2.5466741753334956e-06)
			(5, 4.526163019324708e-06)
			(6, 1.1271962230548402e-05)
			(7, 0.00016065768083667965)
		};
		
		\addplot[
		mark=triangle*,mark options={scale=2,solid},color=orange,dash pattern={on 7pt off 2pt on 1pt off 3pt},thick
		]
		coordinates {
			(3, 8.105529990336901e-05)
			(4, 0.0002869656089125352)
			(5, 0.001053994568342978 )
			(6, 0.004101223233536515 )
			(7, 0.01687386645446957)
		};

		\addplot[
		mark=o,mark options={scale=1,solid},color=seabornblue,dotted,thick
		]
		coordinates {
			(3, 1.55739431e-07)
			(4, 1.24650663e-06)
			(5, 9.96923014e-06)
			(6, 7.96681073e-05)
			(7, 6.34875661e-04)
		};
		\addplot[
		mark=triangle,mark options={scale=2,solid},color=seabornblue,dotted,thick
		]
		coordinates {
			(3, 2.09292147e-07)
			(4, 1.67502971e-06)
			(5, 1.33940120e-05)
			(6, 1.06959464e-04)
			(7, 8.49949862e-04)
		};

		\legend{$\mathfrak{s}=6.20$, $\mathfrak{s}=4.05$, $\mathfrak{s}=3.83$, $\mathfrak{s}=2.04$,  $\mathfrak{s}=2.99$, $\mathfrak{s}=2.99$}
		\end{semilogyaxis}
		\end{tikzpicture}

	\end{subfigure}%
	\begin{subfigure}{.5\textwidth}
		\centering
		\begin{tikzpicture}[scale=.55]
		\begin{semilogyaxis}[
		title=Errors for $q$-independent solution,
		xlabel={$\log_2(q)$},
		ymin=1e-14, ymax=3,
		ylabel={$\log\left(Abs_e(\cdot,\frac{1}{2^7})\right)$},
		legend pos=outer north east
		]

		\addplot[
		mark=*,mark options={scale=1,solid},color=seaborngreen,thick				]
		coordinates {
			(3, 6.748989116856734e-11)
			(4, 1.993772376900343e-11)
			(5, 6.604797241594618e-12)
			(6, 4.75524753116571e-12)
			(7, 4.680552402189094e-12)
		};
		
		\addplot[
		mark=triangle*,mark options={scale=2,solid},color=seaborngreen,thick
		]
		coordinates {
			(3, 2.476935380358304e-07)
			(4, 6.15009442056466e-08)
			(5, 1.536351915220152e-08)
			(6, 3.841823693304671e-09)
			(7, 9.609627162658454e-10)
		};

		\addplot[
		mark=*,mark options={scale=1,solid},color=orange,dash pattern={on 7pt off 2pt on 1pt off 3pt},thick
		]
		coordinates {
			(3, 4.649448285759746e-07)
			(4, 2.768430298752057e-08)
			(5, 4.562446435217226e-09)
			(6,  9.142710136874816e-10)
			(7, 4.226980306757609e-10)
		};
		
		\addplot[
		mark=triangle*,mark options={scale=2,solid},color=orange,dash pattern={on 7pt off 2pt on 1pt off 3pt},thick
		]
		coordinates {
			(3, 9.26322331250278e-06)
			(4, 2.189940180534842e-06)
			(5, 5.244884774149189e-07)
			(6, 1.3593448693095967e-07)
			(7, 3.8427144991458196e-08)
		};	
		\addplot[
		mark=o,mark options={scale=1,solid},color=seabornblue,dotted,thick
		]
		coordinates {
			(3, 1.05054264e-08)
			(4, 1.05054283e-08)
			(5, 1.05054302e-08)
			(6, 1.05054360e-08)
			(7, 1.05055639e-08)
		};
		
		\addplot[
		mark=triangle,mark options={scale=2,solid},color=seabornblue,dotted,thick
		]
		coordinates {
			(3, 1.36715431e-08)
			(4, 1.36616098e-08)
			(5, 1.36608132e-08)
			(6, 1.36600056e-08)
			(7, 1.36555747e-08)
		};

		\legend{ $\mathfrak{s}=-0.02 $, $\mathfrak{s}=-2.00 $, $\mathfrak{s}=-1.11 $, $\mathfrak{s}=-1.82 $,  $\mathfrak{s}=0.00 $, $\mathfrak{s}=0.00 $}
		\end{semilogyaxis}
		\end{tikzpicture}
		
	\end{subfigure}
	
	\caption{The $L^2$ (discs) and weighted $H^2$ (triangles) absolute approximation errors at $1/h=2^7$ and different values of $q$, for the conforming method, with $u_h\in \text{ARG}_5(\Omega,\mathcal{T}_h)$ (solid green lines), the C0IP method with $u_h\in CG_3(\Omega,\mathcal{T}_h)$ (dash-dotted orange lines), and the mixed method with $(u_h,\vec{v}_h)\in DG_{2}(\Omega,\mathcal{T}_h)\times V_{4}(\Omega,\mathcal{T}_h)$ (dotted blue lines) for $B=q^{-4}$. Left: $u=\sin(q\left(\frac{3x}{5}+\frac{4y}{5}\right))$. Right: $u=100\sin\left(2\pi x+3\pi y\right)\left(xy(1-x)(1-y)\right)^3$.}\label{tab:exq2}
\end{figure}
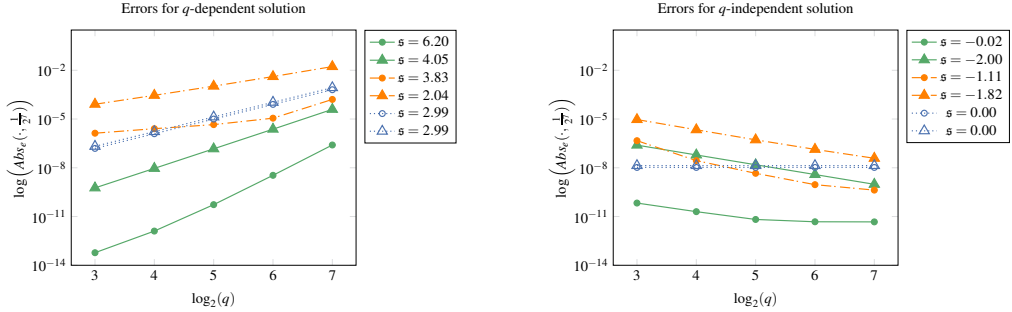
\subsection{3D experiments}
In these experiments, we consider a test on the unit cube domain, with right-hand side and boundary conditions chosen so that $\partial\Omega = \Gamma_{3,2}$ and the exact solution $u_{ex}=\sin\left(q\vec{\nu}\cdot[x, y, z]\right)$, with $q=10$, $B=q^{-4}$, $\boldsymbol{T}=\vec{\nu}\otimes\vec{\nu}$, $\vec{\nu}= [\frac{3}{13}, \frac{4}{13}, \frac{12}{13}]$, and $m=10$. For this problem it is not possible to conduct experiments with a conforming scheme in Firedrake, since the Zhang element~\cite{zhang2009} is not available. Table \ref{Timings-3d} presents results for the C0IP method with $u_h\in CG_{3}(\Omega,\mathcal{T}_h)$ and the mixed formulation with $(u_h, \vec{v}_h, \vec{\alpha}_h)\in DG_{1}(\Omega,\mathcal{T}_h)\times [CG_{3}(\Omega,\mathcal{T}_h)]^3\times RT_{2}(\Omega,\mathcal{T}_h)$ (see Remark~\ref{rem:3d_mixed}),
where we measure the error in the $L^2$ norm as well as the weighted $H^2$ norms, i.e.~the weighted $H^2$ norm defined in \eqref{C0-norm} and the $L^2\times H^1$ norm defined in \eqref{eq:wtd_product_norm}. The Table also reports the wall-clock time (in minutes) that the direct solver, MUMPS, required to solve the arising linear system. The experimental convergence rates are computed using the errors at $h^{-1}=2^4$ and $2^5$. We note that MUMPS
needed more time to solve the linear system arising from the mixed formulation than for the C0IP system at the same level.  
Finally, we point out that our analysis for the mixed formulation does not trivially generalize for the 3D case and finite-element enrichment for the approximation space of $\vec{v}$ is sometimes needed (\revise{see Remark~\ref{rem:3d_mixed}). While the discretization $DG_{1}(\Omega,\mathcal{T}_h)\times [CG_{3}(\Omega,\mathcal{T}_h)]^3\times RT_{2}(\Omega,\mathcal{T}_h)$ gives the optimal convergence rates when $\partial\Omega= \Gamma_{3,2}$, as is shown in Table \eqref{Timings-3d}, numerical experiments showed that the same discretization is unstable for $\partial\Omega = \Gamma_{0,1}$, but can be made stable by such enrichment. }
 
\begin{table}
	\centering
	\caption{Absolute approximation errors and wall-clock times in minutes for the C0IP method with $u_h\in CG_{3}(\Omega,\mathcal{T}_h)$ and the mixed formulation with $(u_h,\vec{v}_h,\vec{\alpha}_h)\in DG_{1}(\Omega,\mathcal{T}_h)\times [CG_{3}(\Omega,\mathcal{T}_h)]^3\times RT_{2}(\Omega,\mathcal{T}_h)$. \label{Timings-3d}}
	\begin{tabular}{ c| c c c |c c c}  
		\toprule
		\multicolumn{1}{c}{$h^{-1}$}& \multicolumn{3}{|c|}{Mixed}&\multicolumn{3}{c}{C0IP} \\ \midrule
		& $\|E_h\|_0$ & $\|E_h\|_{0,q,1}$ &Time&$\|E_h\|_0$&$\tnorm{E_h}{h}$&Time\\ \midrule
		$2^2$ &1.47$\times 10^{-1}$ &1.83$\times 10^{-1}$ &0.16& 3.86$\times 10^{-2}$&3.37$\times 10^{-1}$ & 0.01\\ 
		$2^3$&5.19 $\times 10^{-2}$&6.08$\times 10^{-2}$ & 0.06&4.17$\times 10^{-3}$ &1.06$\times 10^{-1}$ &0.02 \\ 
		$2^4$&1.41$\times 10^{-2}$ &1.55$\times 10^{-2}$ &1.45 & 3.79$\times 10^{-4}$& 2.95$\times 10^{-2}$&0.36 \\ 	
		$2^5$&3.60$\times 10^{-3}$&3.86$\times 10^{-3}$& 339.56& 4.56$\times 10^{-5}$&7.72 $\times 10^{-3}$& 14.40\\
		Rates&1.97& 2.00& &3.05 &1.93& \\
		\bottomrule
	\end{tabular}
\end{table}
\section{Conclusions}\label{sec:conclusions}
We consider and compare different finite-element techniques to discretize a fourth-order PDE describing the density variation of a smectic A liquid crystal. These models have two complications in comparison to classical biharmonic operators, as they are more akin to Helmholtz operators than elliptic ones, and involve Hessian-squared (div-div-grad-grad) operators rather than the classical biharmonic operator (div-grad-div-grad), with boundary conditions that preclude this potential simplification. We analyzed $H^2$-conforming, C0IP, and mixed finite-element methods.

In the $H^2$-conforming case, we use $C^1$ Argyris/Zhang elements.  In practice, these elements can be expensive, due to their high order (fifth-order piecewise polynomials in 2D and ninth-order in 3D), but they offer high-order approximation of smooth solutions. In this case, we implement essential boundary conditions using non-symmetric Nitsche-type penalty methods, which somewhat degrades the error estimates from the (impractical) case where essential BCs are imposed strongly. C0IP methods have the advantage over $H^2$-conforming elements that there is greater flexibility in choosing the order of approximation, at the cost of more complicated weak forms, where $C^1$-conformity is weakly enforced by penalizing inter-element jumps in the first derivative.  Our error estimates in this case match the dependence on $q$ from the conforming case, but with an $h$-dependence in line with the polynomial order used for the $CG$ space. Finally, we consider a three-field mixed ement formulation that explicitly introduces the gradient as an independent variable constrained using a Lagrange multiplier. The mixed formulation offers better robustness in $B$ than the other schemes, in exchange for higher degree of freedom counts. Numerical results confirm the theoretical expectations.

The mixed formulation proposed here was motivated by the observation that design of optimal linear solvers for the C0IP formulation is not straightforward (with direct solvers used in~\cite{xia2021structural}), coupled with the observed success of monolithic multigrid methods for a similar mixed discretization for the $H^2$-elliptic case of fourth-order operators in~\cite{farrell2021new}.  A natural step for future work is in extending these linear solvers to the mixed formulation proposed herein, in parallel to investigating effective linear solver strategies for the other discretizations.  As this work is motivated by considering the more complex models in~\cite{xia2021structural}, coupling the smectic density to a director field or tensor-valued order parameter, another natural direction for future research is to extend the analysis to mixed formulations of the energy minimization problem associated with Equation~\eqref{E}.

\section*{Funding}
The work of AH and SM was partially supported by an NSERC discovery grant. The work of PEF is supported by EPSRC grants EP/R029423/1 and EP/W026163/1.

\bibliographystyle{agsm}
\bibliography{IMAJNA-bibliography}

\end{document}